\newtheorem{thm}{Theorem}[section]
\newtheorem{lem}[thm]{Lemma}
\newtheorem{cor}[thm]{Corollary}
\newtheorem{prop}[thm]{Proposition}
\newtheorem{remark}[thm]{Remark}
\newtheorem{defn}[thm]{Definition}
\newcommand{\R}{\mathbb{R}}
\begin{document}
\title[Fisher-KPP equation with advection]{Long time behavior of solutions of
Fisher-KPP equation with advection and free boundaries$^\S$}
 \thanks{$\S$ This research was partly supported by  NSFC (No. 11271285). }
\author[H. Gu, B. Lou, M. Zhou]{Hong Gu$^\dag$, Bendong Lou$^\dag$ and Maolin Zhou$^\ddag$}
\thanks{$\dag$ Department of Mathematics, Tongji University, Shanghai 200092, China.}
\thanks{$\ddag$ Graduate School of Mathematical Sciences, The University of Tokyo, Tokyo 153-8914, Japan.}
\thanks{{\bf Emails:} {\sf honggu87@126.com} (H. Gu), {\sf blou@tongji.edu.cn} (B. Lou),
{\sf zhouml@ms.u-tokyo.ac.jp} (M. Zhou) }
\date{}

\begin{abstract}
We consider Fisher-KPP equation with advection: $u_t=u_{xx}-\beta u_x+f(u)$ for
$x\in (g(t),h(t))$, where $g(t)$ and $h(t)$ are two free boundaries satisfying
Stefan conditions. This equation is used to describe the population dynamics in
advective environments. We study the influence of the advection coefficient $-\beta$
on the long time behavior of the solutions. We find two parameters $c_0$ and
$\beta^*$ with $\beta^*>c_0>0$ which play key roles in the dynamics, here $c_0$
is the minimal speed of the traveling waves of Fisher-KPP equation. More precisely,
by studying a family of the initial data $\{ \sigma \phi \}_{\sigma >0}$ (where
$\phi$ is some compactly supported positive function), we show that,
(1) in case $\beta\in (0,c_0)$, there exists $\sigma^*\geqslant0$ such that spreading
happens when $\sigma>\sigma^*$ (i.e., $u(t,\cdot;\sigma\phi)\to 1$ locally uniformly
in $\R$)  and vanishing happens when $\sigma \in (0,\sigma^*]$ (i.e., $[g(t),h(t)]$
remains bounded and $u(t,\cdot;\sigma\phi)\to 0$ uniformly in $[g(t),h(t)]$);
(2) in case $\beta\in (c_0,\beta^*)$, there exists $\sigma^*>0$ such that virtual
spreading happens when $\sigma>\sigma^*$ (i.e., $u(t,\cdot;\sigma \phi)\to 0$
locally uniformly in $[g(t),\infty)$ and $u(t,\cdot + ct;\sigma \phi )\to 1$
locally uniformly in $\R$ for some $c>\beta -c_0$), vanishing happens when
$\sigma\in (0,\sigma^*)$, and in the transition case $\sigma=\sigma^*$,
$u(t, \cdot+o(t);\sigma \phi)\to V^*(\cdot-(\beta-c_0)t )$ uniformly, the latter
is a traveling wave with a ``big head" near the free boundary $x=(\beta-c_0)t$ and
with an infinite long \lq\lq tail" on the left;
(3) in case $\beta = c_0$, there exists $\sigma^*>0$ such that virtual spreading
happens when $\sigma > \sigma^*$ and $u(t,\cdot;\sigma \phi)\to 0$ uniformly in
$[g(t),h(t)]$ when $\sigma \in (0,\sigma^*]$;
(4) in case $\beta\geqslant \beta^*$, vanishing happens for any solution.
\end{abstract}

\subjclass[2010]{35K20, 35K55, 35R35, 35B40}
\keywords{Fisher-KPP equation, advection, free boundary problem, long time behavior,
 spreading speed, asymptotic profile}
\maketitle

\section{Introduction}
In this paper, we consider the following problem
\begin{equation}\label{p}
\left\{
\begin{array}{ll}
 u_t =u_{xx}- \beta u_{x}+f(u), &  g(t)< x<h(t),\; t>0,\\
 u(t,g(t))=0,\ \ g'(t)=-\mu u_x(t, g(t)), & t>0,\\
 u(t,h(t))=0,\ \ h'(t)=-\mu u_x (t, h(t)) , & t>0,\\
-g(0)=h(0)= h_0,\ \ u(0,x) =u_0 (x),& -h_0\leqslant  x \leqslant  h_0,
\end{array}
\right.
\tag{$P$}
\end{equation}
where $\mu$ and $\beta$ are positive constants, $h_0>0$ and $u_0$ is a nonnegative $C^2$
function with support in $[-h_0,h_0]$, $f:[0,\infty)\to \R$ is a $C^1$ function
satisfying
\begin{equation}\label{f}
\left\{
\begin{array}{ll}
f(0)=f(1)=0,\quad (1-u)f(u)>0 \mbox{ for } u>0 \mbox{ and } u\not= 1,\\
f'(0)>0,\ f'(1)<0 \mbox{ and } f(u)\leqslant  f'(0)u \mbox{ for } u\geqslant  0.
\end{array}
\right.
\tag{$F$}
\end{equation}

The problem \eqref{p} is used to model the spreading of a new or invasive species,
under the influence of diffusion and advection.
The unknown $u(t,x)$ denotes the population density over a one dimensional habitat and
the free boundaries $x=g(t)$ and $x=h(t)$ represent the expanding fronts of the species.
We assume that the free boundaries move according to one-phase Stefan condition, which is a
kind of free boundary conditions widely used in the study of melting of ice \cite{R}, wound healing
\cite{CF}, and population dynamics \cite{BDK, DuLin, DuLou}.
The derivation of one-phase or two-phase Stefan conditions in population models as
singular limits of competition-diffusion systems can be found in \cite{HIMN, HMS} etc.

When $\beta=0$ (i.e., there is no advection in the environment), the qualitative properties of
the problem \eqref{p} was studied by Du and Lin \cite{DuLin} for logistic nonlinearity $f(u)=u(1-u)$.
Among others, they proved that, when $h_0\geqslant  \frac{\pi}{2}$, any solution of
\eqref{p} with $\beta=0$ grows up and converges to $1$ (which is called {\it spreading} phenomena);
when $h_0<\frac{\pi}{2}$, spreading happens if $\mu$ is large and
{\it vanishing} happens if $\mu$ is small (i.e., the solution converges to $0$).
The vanishing phenomena is a remarkable result since it shows that the presence of
free boundaries may avoid the so-called {\it hair-trigger effect}, which is a
phenomena shown in \cite{AW}: spreading always happens for a solution of the Cauchy
problem for $u_t =u_{xx}+f(u)$, no matter how small the positive initial data is.
Recently, Du and Lou \cite{DuLou} extended the results in \cite{DuLin} to the problem
with general monostable, bistable and combustion types of $f$, and gave a rather
complete description on the long time behavior of the solutions. In addition, Kaneko
and Yamada \cite{KY}, Liu and Lou \cite{LL1, LL2} studied the problem \eqref{p} with
$\beta=0$ and with a fixed boundary $g(t)\equiv 0$, Du and Guo \cite{DuGuo, DuGuo2}, Du, Matano
and Wang \cite{DMW}, Zhou and Xiao \cite{ZX}, Wang \cite{Wang} studied the problem
(without advection) in higher dimension spaces and/or in spatial heterogeneous environments.
Besides the qualitative properties, another interesting problem is the
asymptotic spreading speeds of the free boundaries when spreading happens. Du and
Lin \cite{DuLin}, Du and Lou \cite{DuLou} proved that, when spreading happens for
a solution $(u,g,h)$ of the problem \eqref{p} with $\beta=0$,
\begin{equation}\label{speed-000}
c^*:= \lim_{t\to\infty}\frac{h(t)}{t}  = \lim_{t\to\infty}\frac{-g(t)}{t} >0.
\end{equation}
Recently, Du, Matsuzawa and Zhou \cite{DMZ} improved this result to better ones:
\begin{equation}\label{speed-001}
\lim_{t\to \infty} h'(t) = \lim_{t\to \infty} [-g'(t)] =c^*,\quad
\lim_{t\to \infty} [h(t)- c^* t ] =H_\infty,\quad \lim_{t\to \infty} [g(t)+ c^* t ] =G_\infty,
\end{equation}
for some $H_\infty,\ G_\infty \in \R$.

In this paper we consider the problem \eqref{p} with $\beta>0$, which means that
the spreading of a species is affected by advection. In the field of ecology, organisms
can often sense and respond to local environmental cues by moving towards favorable habitats,
and these movement usually depend upon a combination of local biotic and abiotic factors
such as stream, climate, food and predators. For example, some diseases spread along the
wind direction. In 2009, Maidana and Yang \cite{Maiyang} studied the propagation of West
Nile Virus from New York City to California state. It was observed that West Nile Virus
appeared for the first time in New York City in the summer of 1999. In the second year
the wave front travels 187km to the north and 1100km to the south. Therefore, they took
account of the advection movement and showed that bird advection becomes an important
factor for lower mosquito biting rates. Another example is that Averill \cite{Ave}
considered the effect of intermediate advection on the dynamics of two-species competition
system, and provided a concrete range of advection strength for the coexistence of two
competing species. Moreover, three different kinds of transitions from small advection
to large advection were illustrated theoretically and numerically. Many other examples
involving advection can also be found in the field of ecology (cf.
\cite{BS,BC,BP,PL,RSB,SO,SG,VL} etc.).

From a mathematical point of view, to involve the influence of advection, one of the
simplest but probably still realistic approaches is to assume that species can move
up along the gradient of the density, as considered in \cite{BH,HL,HAES,RSB,SO,SG,VL} etc.

Gu, Lin and Lou \cite{GLL1, GLL2} studied the problem \eqref{p} with small advection.
They proved a spreading-vanishing dichotomy result on the long time behavior of positive solutions of \eqref{p},
which is similar as the conclusions in \cite{DuLin,DuLou}
for equations without advection. They also proved that, when spreading happens
for a solution of \eqref{p} with small advection, its rightward spreading
speed is bigger than the leftward one:
\begin{equation}\label{speed-002}
\lim_{t\to\infty}\frac{h(t)}{t}  >  \lim_{t\to\infty}\frac{-g(t)}{t} >0.
\end{equation}
Recently, Kaneko and Matsuzawa \cite{KM} improved this result to some conclusions like \eqref{speed-001}.

Our main purpose in this paper is to study the influence of the advection term
$-\beta u_x$ on the long time behavior of solutions of \eqref{p}.
As we will see below, our study improves the results in \cite{GLL1, GLL2, KM}
since we will study the problem \eqref{p} for all $\beta>0$, not only for small $\beta$.
Especially, when $\beta$ is large, the phenomena is much more complicated
and more interesting than the case where $\beta$ is small.

We point that the problem \eqref{p} for the equations with bistable type of nonlinearity,
or the problems (with monostable or bistable type of nonlinearity) in the interval
$[0,h(t)]$ with $x=h(t)$ a free boundary and $x=0$ a fixed boundary where $u$ satisfies
a general Robin boundary condition
can be considered similarly. In fact, in our forthcoming papers \cite{G, GLiu, GL} we
study these problems and obtain similar results as in this paper.

To sketch the influence of $\beta$, we introduce two important traveling waves.
First, consider the following problem
\begin{equation}\label{c0}
\left\{
\begin{array}{ll}
q''(z) - c q' (z) +f(q)=0,\quad z\in \R,\\
q(-\infty)=0,\  q(+\infty)=1,\ q(0)=1/2,\quad q' (z)>0 \mbox{ for } z \in  \R.
\end{array}
\right.
\end{equation}
It is well known that this problem has a solution $q(z;c)$ if and only if $c\geqslant  c_0 $, where
$$
c_0:= 2\sqrt{f'(0)}
$$
is called the minimal speed of the traveling waves of Fisher-KPP equation.
Denote $Q(z) := q(z;c_0)$, then
$u(t,x) = Q (x - (\beta -c_0)t)$ is a traveling wave of $u_t = u_{xx} - \beta u_x +f(u)$.
It travels leftward (resp. rightward) if and only if $\beta <c_0$ (resp. $\beta >c_0$).
Next we consider the following problem
\begin{equation}\label{c*}
\left\{
\begin{array}{ll}
q''(z) + (c-\beta ) q'(z) + f(q)=0,\quad z \in (-\infty,0),\\
q(0)=0,\ q(-\infty)=1,\ -\mu q'(0)=c,\quad q' (z)<0\mbox{ for } z\in (-\infty, 0].
\end{array}
\right.
\end{equation}
As is shown in Lemma \ref{lem:semi-wave} (see also \cite{DuLin,DuLou,GLL2}), for any
$\beta>0$, this problem has a unique solution $(c,q)=(c^*,U^*(z))$.
So $u(t,x) = U^* (x-c^* t)$ is a solution of $u_t = u_{xx} -\beta u_x +f(u)$,
with $u(t,c^* t)=0,\ c^* = - \mu u_x(t, c^* t)$. It is called a {\it traveling semi-wave}
in \cite{DuLou} since it is only defined for $x\leqslant   c^* t$. We also write
$c^*$ as $c^*(\beta)$ to emphasize the dependence of $c^*$ on $\beta$, then we
will show in Lemma \ref{lem:semi-wave} that the equation $\beta - c_0 = c^*(\beta)$
has a unique root $\beta^* >c_0$:
\begin{equation}\label{def:beta*}
\beta^* - c_0 = c^*(\beta^*).
\end{equation}

We will see below that the traveling wave $Q(x -(\beta -c_0) t)$ and the traveling
semi-wave $U^* (x-c^* t)$ are of special importance in the study of spreading
solutions. To explain their roles intuitively, we consider the problem \eqref{p}
with initial data $u_0 (x)$ which is even and
$$
u_0(x) = \left\{
\begin{array}{ll}
 1, & x\in [0, h_0-1],\\
 \mbox{smooth and decreasing}, & x\in [h_0 -1, h_0],
 \end{array}
 \right. \qquad \mbox{ with } h_0 \gg 1.
$$
It is easily seen by the maximum principle that $u(t,\cdot)$ has
exactly one maximum point. As usual, we call the sharp decreasing part in the graph
of $u(t,\cdot)$ the {\it front}, and call the sharp increasing part on the left side
the {\it back}. Now we sketch the influence of the advection $-\beta u_x$. {\it Case 1}.
When $\beta \in (0, c_0)$, the advection influence is not strong, the solution
has enough space between the back and the front to grow up and to converge to 1. Its front
approaches a profile like $U^*(\cdot)$ and moves rightward at a speed $\approx c^*$. Its back
approaches a profile like $U^*(-\; \cdot)$
and moves leftward at a speed smaller than $ c_0 -\beta$ (see details in Theorem
\ref{thm:profile of spreading sol} below). This case is similar as the spreading
phenomena in \cite{DuLin,DuLou} for the equation with $\beta=0$. {\it Case 2}. When
$\beta \in (c_0, \beta^*)$ with $\beta^*$ being the unique root of \eqref{def:beta*},
the traveling wave $Q(x  - (\beta-c_0)t)$ travels rightward at a speed $\beta -c_0>0$.
Hence the back of the solution $u$, with a shape like $Q(x-(\beta-c_0)t)$, is pushed
by $Q(x  - (\beta-c_0)t)$ to move rightward at a speed $\approx \beta -c_0$, and so
$u\to 0$ locally uniformly. But, when the initial domain is wide enough, the solution
still have enough space to grow up between the back and the front
since the front moves rightward (at a speed $\approx c^*$) faster than the back. In
this paper we call such a phenomena as {\it virtual spreading} (see Theorem
\ref{thm:middle beta} and Lemma \ref{lem:condition for virtual spreading} below). {\it Case 3}.
When $\beta = c_0$, the traveling wave $Q(x  - (\beta-c_0)t) = Q(x)$ is indeed a
stationary solution of \eqref{p}$_1$. However, the back of the solution $u$ still
moves rightward at a speed $O(t^{-1})$ since it starts from a compactly supported
initial data $u_0$ (cf. \cite{HNRR} and see details below). Hence virtual spreading
still happens when $h_0$ is sufficiently large, since the front moves rightward
at speed $c^*$, faster than the back. {\it Case 4}. When $\beta > \beta^*$, the back
moves rightward (at a speed $\approx \beta-c_0$) faster than the front (which moves
rightward at speed $\approx c^*< \beta -c_0$). So the solution is suppressed by its
back, and then $u\to 0$ uniformly. In summary, the long time behavior of the solutions
is quite different for $\beta\in (0,c_0),\; \beta =c_0,\; \beta\in (c_0, \beta^*)$
and $\beta> \beta^*$.

This paper is organized as the following. In section 2 we present our main results.
In section 3 we give some preliminaries including the comparison principles, stationary solutions, several types of
traveling waves, zero number arguments and some upper bound estimates. In section 4
we study the influence of the advection on the long time behavior of the solutions.
In section 5, we revisit the virtual spreading phenomena and give a uniform
convergence for such solutions.

\section{Main Results}
Throughout this paper we choose initial data $u_0$ from the following set:
\begin{equation}\label{def:X}
\mathscr {X}(h_0):= \Big\{ \phi \in C^2 ([-h_0,h_0]) \mid
\phi(-h_0)= \phi (h_0)=0,\; \phi(x) \geqslant ,\not\equiv 0 \ \mbox{in } (-h_0,h_0).\Big\}
\end{equation}
where $h_0 >0$ is any given real number. By a similar argument as in \cite{DuLin,DuLou},
one can show that, for any initial data $u_0\in\mathscr{X}(h_0)$, the problem \eqref{p}
has a time-global solution $(u,g,h)$, with $u\in C^{1+\nu/2,2+\nu}((0,\infty)\times[g(t),h(t)])$
and $g,h\in C^{1+\nu/2}((0,\infty))$ for any $\nu\in(0,1)$. Moreover, it follows from the
maximum principle that, when $t>0$, the solution $u$ is positive in $(g(t),h(t))$,
$u_x(t,g(t))>0$ and $u_x(t,h(t))<0$. Hence $g'(t)<0$, $h'(t)>0$. Denote
$$
g_{\infty}:=\lim_{t\to\infty}g(t),\quad h_{\infty}:=
\lim_{t\to\infty}h(t),\quad I(t):= [g(t),h(t)] \quad \mbox{and} \quad
I_{\infty}:=(g_{\infty},h_{\infty}).
$$
In what follows, we mainly consider the solution of \eqref{p} with initial data
$u_0=\sigma\phi$ for some given $\phi\in\mathscr {X}(h_0)$ and $\sigma\geqslant0$.
We also use $(u(t,x;\sigma\phi),g(t;\sigma\phi),h(t;\sigma\phi))$ to denote such
a solution. Now we list some possible situations for the solutions of \eqref{p}.
\begin{itemize}
\item {\it spreading} : $I_\infty =\R$ and
\begin{equation}\label{eq spreading}
\lim_{t\to\infty}u(t,\cdot)=1 \mbox{ locally uniformly in $\R$};
\end{equation}

\item {\it vanishing} :  $I_\infty$ is a bounded interval and
\begin{equation}\label{eq vanishing}
\lim_{t\to\infty}\max_{g(t)\leqslant  x\leqslant  h(t)} u(t,x)=0;
\end{equation}

\item {\it virtual spreading} : $g_\infty>-\infty,\ h_\infty=+\infty$,
\begin{equation}\label{eq v s}
\lim_{t\to\infty}u(t,\cdot)=0 \mbox{ locally uniformly in } I_\infty
\end{equation}
and
\begin{equation}\label{eq v s 2}
\lim_{t\to\infty}u(t,\cdot+ct)=1 \mbox{ locally uniformly in } \R,
\quad \mbox{ for some } c >0;
\end{equation}

\item {\it virtual vanishing} : $g_\infty>-\infty,\ h_\infty=+\infty$ and
\eqref{eq vanishing} holds.
\end{itemize}

When the advection is small, we have the following conclusion on the long time
behavior of the solutions.

\begin{thm}[{\rm the case $\beta \in (0, c_0)$}]\label{thm:small beta}
Assume $0<\beta<c_0$ and $(u,g,h)$ is a time-global solution of \eqref{p} with
initial data $u_0 = \sigma \phi$ for some $\phi\in \mathscr {X}(h_0)$. Then
there exists $\sigma^* = \sigma^* (h_0, \phi) \in [0,\infty]$ such that

{\rm (i)} {\rm vanishing} happens when $\sigma \in [0,\sigma^*]$, with
$|I_\infty| = h_\infty -g_\infty \leqslant\frac{2\pi}{\sqrt{c_0 ^2-\beta^2}}$;

{\rm (ii)} {\rm spreading} happens when $\sigma> \sigma^*$.
\end{thm}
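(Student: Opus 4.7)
The approach follows the classical spreading--vanishing dichotomy of Du--Lin \cite{DuLin} and Du--Lou \cite{DuLou}, extended to the small-$\beta$ advection case in Gu--Lin--Lou \cite{GLL1, GLL2}; the task is to confirm that the critical-length argument carries over to the whole subcritical range $\beta \in (0, c_0)$. By the comparison principle for \eqref{p} (the advection term $-\beta u_x$ is a lower-order linear drift and does not obstruct the standard comparison lemma), $\sigma_1 \leq \sigma_2$ implies $u(t,x;\sigma_1 \phi) \leq u(t,x;\sigma_2 \phi)$ with nested free-boundary intervals. Hence $h_\infty(\sigma)$ and $-g_\infty(\sigma)$ are nondecreasing in $\sigma$, so the vanishing set $\Sigma_v := \{\sigma \geq 0 : \text{vanishing holds for } u(\cdot,\cdot;\sigma\phi)\}$ is downward closed; set $\sigma^* := \sup \Sigma_v \in [0,\infty]$.

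The core of the proof is a dichotomy governed by the critical length $L^* := 2\pi/\sqrt{c_0^2 - \beta^2}$. The substitution $\phi = e^{\beta x/2}\tilde\phi$ converts the Dirichlet eigenvalue problem $-\phi'' + \beta \phi' - f'(0)\phi = \lambda \phi$ on an interval of length $L$ into $-\tilde\phi'' + (\beta^2/4 - f'(0))\tilde\phi = \lambda\tilde\phi$, whose principal eigenvalue is $\pi^2/L^2 + \beta^2/4 - c_0^2/4$ (using $c_0^2 = 4f'(0)$); this is positive, equivalent to stability of $u \equiv 0$ for the linearized equation on such a domain, iff $L < L^*$. From this I would deduce: (a) If $h_\infty,\,-g_\infty$ are finite and $|I_\infty| \leq L^*$, then $h'(t), -g'(t) \to 0$ via the Stefan conditions, and parabolic regularity plus the bound $u \leq 1$ give subsequential convergence of $u(t,\cdot)$ to a nonnegative stationary $v$ on $I_\infty$ with $v = v' = 0$ on $\partial I_\infty$; by the eigenvalue bound together with the Kolmogorov hypothesis $f(u) \leq f'(0)u$, the only such $v$ is $v \equiv 0$, so vanishing holds. (b) If $h(T) - g(T) > L^*$ at some finite $T$, the principal eigenvalue on $[g(T), h(T)]$ is strictly negative; a small multiple of the first eigenfunction is a subsolution that grows exponentially until $u$ approaches $1$ on a subinterval, and comparison from below with the traveling semi-waves $U^*(\pm(\cdot - c^* t))$ furnished by Lemma \ref{lem:semi-wave} then forces $h_\infty = -g_\infty = +\infty$ and $u \to 1$ locally uniformly, i.e.\ spreading.

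It remains to show $\sigma^* \in \Sigma_v$ whenever $\sigma^* < \infty$, so that $\Sigma_v = [0,\sigma^*]$ and (i)--(ii) follow. If $\sigma_n \uparrow \sigma^*$ with each $\sigma_n \in \Sigma_v$ but spreading were to hold at $\sigma^*$, then by (b) there would exist a finite $T$ with $h(T;\sigma^*\phi) - g(T;\sigma^*\phi) > L^*$. Continuous dependence of \eqref{p} on initial data (via the standard diffeomorphism onto a fixed reference interval) yields the same strict inequality for $\sigma_n$ sufficiently close to $\sigma^*$, whence spreading for those $\sigma_n$ by (b), contradicting $\sigma_n \in \Sigma_v$.

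The technically delicate step is the borderline equality $|I_\infty| = L^*$ in (a): one must rule out a positive steady state on an interval of exactly critical length that additionally satisfies the Neumann condition $v' = 0$ at both endpoints (the latter coming from the vanishing Stefan velocities). I expect to handle this via a zero-number argument applied to the one-parameter family of candidate steady states -- precisely the sort of tool listed for Section 3 of the paper -- in the spirit of the classification used in \cite{DuLou}.
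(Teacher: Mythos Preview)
Your approach is essentially that of the paper: the critical length $2H^*=2\pi/\sqrt{c_0^2-\beta^2}$ from the linearized eigenvalue problem governs the dichotomy (this is the content of Lemma~\ref{lem:<c0}), and the threshold $\sigma^*$ is then extracted by comparison and continuous dependence exactly as in \cite[Theorem~5.2]{DuLou}. The paper packages your step~(a) into the general convergence result Theorem~\ref{thm:convergence} rather than arguing it directly.

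One remark: the borderline case $|I_\infty|=L^*$ that you flag as the delicate step is in fact immediate from the boundary data you already recorded, and no zero-number machinery is needed. Any $\omega$-limit $v$ satisfies $v=v_x=0$ at $x=g_\infty$; if $v$ is stationary this is a double zero for a second-order ODE and uniqueness of the initial-value problem gives $v\equiv 0$, and for a general entire parabolic limit the Hopf lemma gives the same conclusion. This is exactly how the paper handles it in the proof of Theorem~\ref{thm:convergence} (``one can show by Hopf lemma that $v=0$ when $g_\infty>-\infty$ or $h_\infty<\infty$''). The eigenvalue sign matters only for the spreading direction~(b), not for~(a). (Minor slip: $u$ need not satisfy $u\leq 1$ if $\sigma\phi$ exceeds $1$; it is merely bounded by $\max\{1,\|\sigma\phi\|_\infty\}$, which is all you use.)
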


\noindent
From this theorem we see that the long time behavior of the solutions of \eqref{p} with
small advection: $\beta \in (0,c_0)$ is similar as the case without advection: $\beta =0$
(cf. \cite{DuLin, DuLou, GLL1}). The main reason is that in both cases the problem \eqref{p}
has exactly two stationary solutions: $0$ and $1$ in $\R$. The proof of this theorem,
which is given in subsection 4.2, is also similar as that for $\beta=0$.

\smallskip

Next we consider the case where the advection is not small: $\beta \geqslant  c_0$.
The most interesting phenomena appears in the problem with medium-sized advection:
$\beta\in [c_0, \beta^*)$, where $\beta^*$ is the unique root of \eqref{def:beta*}.

\begin{thm}[\rm the case $c_0<\beta<\beta^*$]\label{thm:middle beta}
  Assume $c_0<\beta<\beta^*$ and $(u,g,h)$ is a time-global solution of \eqref{p}
  with initial data $u_0 =\sigma \phi$ for some $\phi\in \mathscr{X}(h_0)$. Then
  there exists $\sigma^* = \sigma^* (h_0, \phi) \in (0,\infty]$ such that

{\rm (i)} {\rm virtual spreading} happens when $\sigma >\sigma^*$, and
\[
\lim_{t\to\infty}u(t,\cdot+ct)=1 \mbox{ locally uniformly in $\R$},
\quad \mbox{ for any } c\in (\beta -c_0, c^*),
\]
where $c^* =c^*(\beta)$ is the speed of the traveling semi-wave in \eqref{c*};

{\rm (ii)} {\rm vanishing} happens when $0<\sigma <\sigma^*$;

{\rm (iii)} in the {\rm transition} case $\sigma =\sigma^*$: $g_\infty>-\infty,\ h_\infty=+\infty$,
\[
\lim_{t\to\infty}h'(t)=\beta-c_0 \quad \mbox{and} \quad h(t)= (\beta -c_0)t+\varrho (t)
\]
with $\varrho(t) = o(t)$ and $\varrho(t)\to\infty\ (t\to \infty)$. In addition,
\begin{equation}\label{u to V*}
\lim\limits_{t\to\infty} \| u(t, \cdot ) - V^*(\cdot -(\beta-c_0)t  - \varrho(t)) \|_{L^\infty (I(t))} =0,
\end{equation}
where $V^*(z)$ is the unique solution of
\begin{equation}\label{vv}
\left\{
\begin{array}{ll}
q''(z)-c_0 q'(z)+f(q)=0 \quad \mbox{for } z\in (-\infty,0),\\
q(0)=0,\ q(-\infty)=0,\ q(z)>0 \mbox{ for } z\in (-\infty,0),\ -\mu q'(0)=\beta-c_0.
\end{array}
\right.
\end{equation}
\end{thm}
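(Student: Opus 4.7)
The plan is to define the threshold
$$
\sigma^* \;:=\; \sup\{\sigma \geqslant 0 : \text{vanishing holds for the solution with initial datum } \sigma \phi\},
$$
and to establish (i), (ii), (iii) by comparison arguments using the two barriers provided by the earlier sections: the traveling wave $Q(x-(\beta-c_0)t)$ from \eqref{c0} (which drifts \emph{rightward} since $\beta>c_0$) and the traveling semi-wave $U^*(x - c^*t)$ from \eqref{c*}, together with the free-boundary comparison principle. Because $\beta < \beta^*$, the ordering $\beta - c_0 < c^*$ is strict, which is what makes a nontrivial ``plateau'' of virtual spreading possible.

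\textbf{Small $\sigma$ (vanishing).} Using $f(u)\leqslant f'(0)u$ I would dominate $u$ by a linearized supersolution on the slowly growing interval $I(t)$; when $\sigma$ is small the Stefan condition keeps $|I(t)|$ bounded by a length on which the principal Dirichlet eigenvalue of $\partial_{xx}-\beta\partial_x+f'(0)$ is negative, forcing $u\to 0$ and proving $\sigma^*>0$. \textbf{Large $\sigma$ (virtual spreading).} I would build a subsolution that is a suitably truncated and shifted $U^*(x-c^*t-C_1)$ on the right and a shifted $Q(x-(\beta-c_0)t+C_2)$ on the left, matched in the middle where they are both close to $1$. For $\sigma$ large enough, $\sigma\phi$ lies above this subsolution, whence $h(t)\geqslant c^*t - O(1)$ and $g(t)$ stays bounded (the back is pushed rightward by $Q$). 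Squeezing between these two barriers in the strip $\{(\beta-c_0+\varepsilon)t \leqslant x \leqslant (c^*-\varepsilon)t\}$ yields $u(t,\cdot+ct)\to 1$ locally uniformly for every $c\in(\beta-c_0,c^*)$, so $\sigma^*<\infty$. Since both vanishing and virtual spreading are monotone (downward- and upward-closed in $\sigma$, respectively, by the comparison principle), items (i) and (ii) follow.

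\textbf{The transition $\sigma=\sigma^*$ (the main obstacle).} First, neither vanishing nor virtual spreading can hold at $\sigma^*$: each is preserved under a small perturbation of $\sigma$ (vanishing via continuous dependence of the free boundaries and the uniform smallness estimates from Step~1; virtual spreading via the barrier construction from Step~2), which would contradict the definition of $\sigma^*$. Hence $h_\infty=+\infty$, while the traveling wave $Q$ used as a subsolution from the left gives $g_\infty>-\infty$ and $\liminf_{t\to\infty} h(t)/t \geqslant \beta-c_0$; conversely, comparing $u$ with $Q(x-(\beta-c_0)t + C_3)$ translated appropriately far to the right shows that $u$ cannot keep a bulk of mass moving strictly faster than $\beta-c_0$ (otherwise virtual spreading would occur), giving $\limsup h(t)/t\leqslant\beta-c_0$. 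Thus $h(t)/t\to \beta-c_0$, and a Hopf-type argument at the free boundary upgrades this to $h'(t)\to\beta-c_0$.

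To identify the profile, I would pass to the moving frame $y=x-(\beta-c_0)t-\varrho(t)$ with $\varrho(t):=h(t)-(\beta-c_0)t$, which by the preceding step is $o(t)$. Along any time sequence $t_n\to\infty$, standard parabolic estimates up to the free boundary yield, on a subsequence, an entire limiting solution $v(t,y)$ on $(-\infty,0]$ of $v_t = v_{yy}-c_0 v_y + f(v)$ satisfying the boundary conditions $v(t,0)=0$ and $-\mu v_y(t,0)=\beta-c_0$. A zero-number argument, comparing $v(t,\cdot)$ with translates of $V^*$ (and ruling out the constants $0$ and $1$ via the established strict speed $\beta-c_0$ and the exclusion of virtual spreading), forces $v\equiv V^*$, giving \eqref{u to V*}. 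That $\varrho(t)\to+\infty$ follows because $g_\infty>-\infty$ while $V^*$ has support on the whole half-line, so the translation must tend to infinity for the convergence to hold. The hardest technical ingredient is the zero-number uniqueness step: showing that the only entire semi-wave solution compatible with speed $\beta-c_0$ and the Stefan condition is $V^*$, which will rely on the KPP structure and the explicit form of $V^*$ obtained from \eqref{vv}.
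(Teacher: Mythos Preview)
Your outline captures the broad trichotomy but misses the step that distinguishes $c_0<\beta<\beta^*$ from $\beta=c_0$: proving that the transition occurs at a \emph{single} value of $\sigma$. With $\sigma_*=\sup\{\sigma:\text{vanishing}\}$ and $\sigma^{**}=\inf\{\sigma:\text{virtual spreading}\}$, the openness arguments you give yield only $E_0=[0,\sigma_*)$ and $E_1=(\sigma^{**},\infty)$ with $\sigma_*\leqslant\sigma^{**}$; nothing you have written rules out a whole interval $[\sigma_*,\sigma^{**}]$ of transition values (indeed, for $\beta=c_0$ the paper leaves precisely this open). The paper closes this gap by first proving the convergence to $V^*$ for \emph{every} solution that neither vanishes nor virtually spreads (Theorem~\ref{thm:V-convergence}), and then arguing that two distinct transition solutions $u(\cdot;\sigma_1\phi)<u(\cdot;\sigma_2\phi)$ are impossible: a strict comparison with a small spatial shift, $u(1,x;\sigma_1\phi)<u(1,x-\epsilon;\sigma_2\phi)$, propagates for all time, but since both sides converge (in their own moving frames centered at $h(t;\sigma_i\phi)$) to the same tadpole $V^*$, taking the limit contradicts the fact that $V^*$ has a strict interior maximum and hence is not dominated by any nontrivial leftward shift of itself.

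Several of your barrier choices are also off. The compactly supported traveling waves $W_\delta$ of Lemma~\ref{lem:compact tw}, which you do not mention, are the real workhorses: they satisfy the Stefan condition on the right and so supply both the necessary-and-sufficient criterion for virtual spreading (Lemma~\ref{lem:condition for virtual spreading}) and the upper bound $h'(t)<\beta-c_0+\delta$ in the transition case (Lemma~\ref{lem:h'(t)=beta-c0}). The wave $Q$ is used as an \emph{upper} barrier (via $Q_A$) to prove $g_\infty>-\infty$, not as a subsolution; the lower bound $h(t)-(\beta-c_0)t\to+\infty$ comes from a zero-number comparison of $u$ with shifts of $V^*$ (Lemma~\ref{lem:h(t)-(beta-c0)t=+infty}), not from $Q$. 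Your deduction of $\varrho(t)\to\infty$ from the $L^\infty$ convergence to $V^*$ is circular, since $V^*(-\infty)=0$ makes the convergence consistent with $\varrho$ merely bounded; the paper proves $\varrho(t)\to\infty$ first, and this feeds into $h'(t)\to\beta-c_0$. Finally, your identification of the entire limit $v$ with $V^*$ is right in spirit, but the mechanism is cleaner than ``KPP structure'': both $v(t,\cdot)$ and $V^*$ satisfy $q(0)=0$ and $-\mu q'(0)=\beta-c_0$, so $x=0$ is a \emph{degenerate} zero of $v(t,\cdot)-V^*(\cdot)$ for every $t$, which the zero-number lemma forbids unless $v\equiv V^*$.
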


\noindent
In the next section we will see that $V^*$ has a {\it tadpole-like} shape: it has a
\lq\lq big head" and a boundary on the right side and an infinite long \lq\lq tail" on the
left side. So we call $V^*(x-(\beta -c_0)t)$ a {\it tadpole-like traveling wave} with
speed $\beta-c_0$, which exists if and only if $\beta\in(c_0,\beta^*)$ (see Lemma
\ref{lem:tadpole tw bata<beta*} below). Theorem \ref{thm:middle beta} (iii) implies that,
roughly, $u(t,x)$ converges to this traveling wave.

In Aronson and Weinberger \cite{AW}, it was shown that any positive solution of the Cauchy
problem for Fisher-KPP equation converges to 1 (i.e., {\it hair-trigger effect}). In
\cite{DuLin,DuLou}, by introducing the free boundaries, the authors proved a spreading-vanishing
dichotomy on the long time behavior of the solutions of Fisher-KPP equation. In particular,
vanishing may happen for some solutions. Now our Theorem \ref{thm:middle beta} gives the
third possibility besides the virtual spreading and vanishing, that is, with a medium-sized
advection in the equation, there may exist a transition state: the solution converges to a
{\it tadpole-like traveling wave}. This interesting phenomena is new comparing with the
results for Cauchy problems and for free boundary problems without advection.

\begin{thm}[the case $\beta =c_0$]\label{thm:beta=c0}
  Assume $\beta=c_0$ and $(u,g,h)$ is a time-global solution of \eqref{p}
  with initial data $u_0 =\sigma \phi$ for some $\phi\in \mathscr{X}(h_0)$. Then
  there exists $\sigma_*,\; \sigma^* \in (0,\infty]$ with $\sigma_* \leqslant  \sigma^*$
  such that

{\rm (i)} {\rm virtual spreading} happens when $\sigma>\sigma^*$, and
\[
\lim_{t\to\infty}u(t, \cdot + c t)=1 \mbox{ locally uniformly in $\R$}, \quad
\mbox{ for any } c\in (0,c^*),
\]
where $c^*=c^*(\beta)$ is the speed of the traveling semi-wave in \eqref{c*};

{\rm (ii)} {\rm vanishing} happens when $0<\sigma<\sigma_*$;

{\rm (iii)} {\rm virtual vanishing} happens when $\sigma\in [\sigma_*, \sigma^*]$.
\end{thm}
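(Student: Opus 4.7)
The plan is to adapt the spreading/vanishing threshold technique of \cite{DuLin,DuLou} so that it accommodates a third alternative which appears precisely at the critical advection $\beta=c_0$. I will define two thresholds $\sigma_*\le\sigma^*$ as the sharp separators between vanishing and not-vanishing, and between virtual spreading and not-virtual-spreading, respectively; the content of the theorem is then that the gap $[\sigma_*,\sigma^*]$ is filled entirely by the virtual vanishing regime, a phenomenon with no analogue when $\beta<c_0$. Concretely, set $\Sigma_v:=\{\sigma>0:\text{vanishing happens for }\sigma\phi\}$ and $\Sigma_{vs}:=\{\sigma>0:\text{virtual spreading happens for }\sigma\phi\}$. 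The comparison principle for \eqref{p} makes $(u,g,h)$ monotone in $\sigma$, so each of $\Sigma_v$ and $\Sigma_{vs}$ is an interval; the two are disjoint since $I_\infty$ is bounded in the former and not in the latter, so $\sigma_*:=\sup\Sigma_v$ and $\sigma^*:=\inf\Sigma_{vs}$ satisfy $\sigma_*\le\sigma^*$. Positivity $\sigma_*>0$ follows from a standard small-data argument using a bounded linear supersolution. An a priori ingredient specific to the critical case is the bound $g_\infty>-\infty$ valid for \emph{every} solution when $\beta=c_0$: one exploits the fact that $Q(\cdot-x_0)$ is then a stationary solution of the PDE in \eqref{p}, places a suitable translate of $Q$ above $\sigma\phi$, and uses the comparison principle together with the Stefan condition $g'(t)=-\mu u_x(t,g(t))$ to keep $g$ above a finite barrier.

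For part (i), virtual spreading at $\sigma>\sigma^*$ holds by the definition of $\sigma^*$, and the quantitative conclusion $u(t,\cdot+ct)\to 1$ locally uniformly for every $c\in(0,c^*)$ I would obtain by using the traveling semi-wave $U^*(\cdot-c^*t)$ of \eqref{c*} as a subsolution near the right free boundary: once $h(t)$ is sufficiently large one can fit a translate of $U^*$ below $u$, forcing $h(t)\ge c^*t-C$ and a fully developed right front close to $1$; inside the resulting expanding window, standard Fisher--KPP Cauchy-problem comparison and the hair-trigger effect of \cite{AW} push $u$ up to $1$ at any subcritical speed $c<c^*$. Part (ii) is immediate from the monotonicity of $\Sigma_v$.

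The main obstacle is (iii). For $\sigma\in[\sigma_*,\sigma^*]$, the previous steps already exclude both vanishing and virtual spreading and give $g_\infty>-\infty$ together with $h_\infty=+\infty$, so what remains is the uniform decay $\|u(t,\cdot)\|_{L^\infty(I(t))}\to 0$. I would argue by contradiction: suppose there are $t_n\to\infty$ and $x_n\in I(t_n)$ with $u(t_n,x_n)\ge\delta>0$. If $x_n$ stays bounded, parabolic regularity on the fixed left half-line $(g_\infty,\infty)$ allows us to extract a nontrivial entire-in-time limit $\bar u$ solving $\bar u_t=\bar u_{xx}-c_0\bar u_x+f(\bar u)$ with the free-boundary conditions inherited at $g_\infty$; if $x_n\to\infty$, translating by $x_n$ and passing to the limit produces a nontrivial entire solution on all of $\R$. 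In the first scenario $\bar u$ is trapped between $0$ and a stationary translate of $Q$, and a zero-number argument combined with the non-existence of a tadpole-like solution of \eqref{vv} at the critical speed $\beta-c_0=0$ (the obstruction underlying the strict inequality $\beta>c_0$ in the existence range relevant to \eqref{def:beta*}) forces $\bar u\equiv 0$, contradicting $\delta>0$. In the second scenario, the limit carries a region where $\bar u\approx 1$ moving at a positive speed, and one can then push a translate of $U^*$ beneath $u$ for the original $\sigma$ to trigger virtual spreading, contradicting $\sigma\le\sigma^*$. This Liouville-type classification step, delicate precisely because the critical case $\beta=c_0$ admits no transition profile analogous to the $V^*$ of Theorem~\ref{thm:middle beta} to absorb the limiting behavior, is where the analysis departs most substantially from the monostable picture of \cite{DuLou}.
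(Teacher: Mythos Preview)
Your overall architecture (define $\Sigma_v$, $\Sigma_{vs}$, show they are order-intervals, set $\sigma_*=\sup\Sigma_v$, $\sigma^*=\inf\Sigma_{vs}$, and classify the gap) matches the paper, and the small-data vanishing and the openness of $\Sigma_v$ go through. However, two steps that you treat as routine are in fact where the real work lies.

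First, the argument for $g_\infty>-\infty$ when $\beta=c_0$ is not obtained by placing a translate of $Q$ above $u$. The bound $u\le Q(\cdot-x_0)$ only gives $u(t,x)\lesssim |x|e^{c_0 x/2}$ near $x=-\infty$, and feeding this into the Stefan condition yields at best $g(t)\ge -C\ln t$, not boundedness. The paper instead imports the sharp Bramson-type estimate of \cite{HNRR} to get $u(t,-h_0)\le Ct^{-5/4}$ (Corollary~\ref{cor:-h0 h(t)}), combines it with the monotonicity of $u(t,\cdot)$ on $[g(t),-h_0]$ (Lemma~\ref{lem:center}), and then builds an explicit decaying supersolution on a short moving interval to trap $g(t)$.

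Second, and more seriously, your compactness dichotomy in (iii) has a missing case. Splitting into ``$x_n$ bounded'' versus ``$x_n\to\infty$'' ignores the possibility that $x_n\to\infty$ while $h(t_n)-x_n$ stays bounded, i.e.\ mass accumulating near the moving right boundary. In that situation the translated limit lives on a half-line with a free boundary whose speed you have not controlled, so neither the Liouville/zero-number step nor the hair-trigger step applies. The paper resolves this by first proving $h'(t)\to\beta-c_0=0$ via intersection comparison with the compactly supported traveling waves $W_\delta$ (Lemma~\ref{lem:h'(t)=beta-c0}), then deducing $u(t,\cdot+h(t))\to 0$ locally uniformly near the boundary (Theorem~\ref{thm:V-convergence}, Step~1), and only afterwards handling the interior. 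In that interior step, too, the correct lower barrier is not $U^*$ but the compactly supported wave $W_\delta$: one chooses $\delta$ so that $\max W_\delta$ matches the persisting height of $u$, slides $W_\delta$ in from the left, and uses zero-number arguments to force $W_\delta\le u$ at some time, which by Lemma~\ref{lem:condition for virtual spreading} triggers virtual spreading --- the desired contradiction. Pushing a translate of $U^*$ under $u$ would require $u\approx 1$ on an entire left half-line, which you never have here.
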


\noindent
The transition cases in Theorem \ref{thm:middle beta} and Theorem \ref{thm:beta=c0} are
different. In case $c_0<\beta<\beta^*$, a solution $u(t,x;\sigma\phi)$ is a transition one
only if the initial value is taken the sharp threshold value $\sigma^*\phi$. However,
in case $\beta=c_0$, we obtain transition solutions whose initial data are taken from
$\{\sigma\phi\mid \sigma\in [\sigma_*, \sigma^*]\}$. Whether or not this domain
is a singleton: $\sigma_*=\sigma^*$ is still open now. The difficulty in studying this
problem is that virtual vanishing solutions have no \lq\lq shapes", so it is not easy to
compare one to another.

The conclusions for the problem with large advection: $\beta \geqslant  \beta^*$ is rather
simple.

\begin{thm}[the case $\beta \geqslant  \beta^*$]\label{thm:large beta}
Assume $\beta\geqslant \beta^*$ and $(u,g,h)$ is a time-global solution of \eqref{p} with
initial data $u_0\in\mathscr {X}(h_0)$. Then vanishing happens.
\end{thm}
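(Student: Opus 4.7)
The overall strategy is to sandwich $u$ between two moving supersolutions built from the semi-wave $U^*$ of \eqref{c*} and the full Fisher--KPP traveling wave $Q(x-(\beta-c_0)t)$, and then to exploit the defining inequality $\beta-c_0\geqslant c^*(\beta)$ that characterises the regime $\beta\geqslant\beta^*$.

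First I would record two independent upper bounds on $u$, each valid for arbitrary $u_0\in\mathscr{X}(h_0)$. On the right, choose $y_0\geqslant h_0$ so large that $U^*(x-y_0)\geqslant u_0(x)$ on $[-h_0,h_0]$ (possible since $U^*(-\infty)=1$); then $(U^*(x-c^*(\beta)t-y_0),\, c^*(\beta)t+y_0)$ is a supersolution of \eqref{p}, and the standard comparison for free boundary problems gives
\[
h(t)\leqslant c^*(\beta)t+y_0,\qquad u(t,x)\leqslant U^*\bigl(x-c^*(\beta)t-y_0\bigr)\ \text{on}\ [g(t),h(t)].
\]
Independently, since $Q(x-(\beta-c_0)t-x_1)$ is a classical solution of the PDE on all of $\R$, choosing $x_1\ll -h_0$ with $Q(x-x_1)\geqslant u_0(x)$ on $[-h_0,h_0]$ (possible since $Q(+\infty)=1$) and applying the parabolic maximum principle on the cylinder $\{(t,x):t>0,\; g(t)\leqslant x\leqslant h(t)\}$ yields
\[
u(t,x)\leqslant Q\bigl(x-(\beta-c_0)t-x_1\bigr).
\]

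If $\beta>\beta^*$, then $r:=\beta-c_0-c^*(\beta)>0$ and the two bounds cooperate: monotonicity of $Q$ gives, for every $x\in[g(t),h(t)]$,
\[
u(t,x)\leqslant Q\bigl(h(t)-(\beta-c_0)t-x_1\bigr)\leqslant Q(-rt+y_0-x_1)\longrightarrow 0,
\]
and in fact exponentially, by the exponential decay of $Q(z)$ as $z\to-\infty$. To promote this uniform decay into boundedness of $I_\infty$, I would integrate the PDE over $(g(t),h(t))$ and use the Stefan conditions together with $u|_{x=g,h}=0$ to obtain the identity
\[
\frac{d}{dt}\bigl[h(t)-g(t)\bigr]=-\mu\,\frac{d}{dt}\!\int_{g(t)}^{h(t)}\!\!u(t,x)\,dx+\mu\!\int_{g(t)}^{h(t)}\!\!f(u(t,x))\,dx.
\]
Since $h(t)-g(t)=O(t)$ while $\|u(t,\cdot)\|_\infty$ and $f(u)\leqslant f'(0)u$ decay exponentially in $t$, integrating in $t$ shows that $\int u\,dx\to 0$ and $\int_0^\infty\!\int f(u)\,dx\,dt<\infty$; hence $h(t)-g(t)$ stays uniformly bounded. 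Monotonicity of $g$ then forces $-\infty<g_\infty$ and consequently $h_\infty<\infty$, which together with $\|u(t,\cdot)\|_\infty\to 0$ is vanishing.

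The delicate case is the threshold $\beta=\beta^*$, where $r=0$, the two supersolutions move at the same speed $c^*(\beta^*)=\beta^*-c_0$, and the sandwich above only produces an envelope that is stationary in the comoving frame $y=x-c^*(\beta^*)t$. In that frame $v(t,y):=u(t,y+c^*(\beta^*)t)$ solves $v_t=v_{yy}-c_0 v_y+f(v)$ on an interval whose right endpoint $\tilde h(t)=h(t)-c^*(\beta^*)t\leqslant y_0$ is bounded above, with $v(t,\cdot)\leqslant U^*(\cdot-y_0)$. The crucial additional input is that the tadpole-like profile $V^*$ of \eqref{vv}, which is precisely the candidate nonzero stationary solution of this moving-frame free boundary problem, ceases to exist at $\beta=\beta^*$ (Lemma~\ref{lem:tadpole tw bata<beta*} produces it only for $\beta\in(c_0,\beta^*)$). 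Combining parabolic regularity with a zero-number / $\omega$-limit argument then rules out any nonzero limit of $v(t,\cdot)$, forcing $v\to 0$ uniformly, after which the integral identity above once more yields boundedness of $I_\infty$. I expect this last step — excluding a nonzero limit profile exactly at the critical threshold $\beta=\beta^*$ — to be the main obstacle.
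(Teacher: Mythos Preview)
Your two supersolution bounds share a minor technical gap: since $u_0\in\mathscr{X}(h_0)$ only requires $u_0\geqslant 0$, not $u_0\leqslant 1$, you cannot in general arrange $U^*(x-y_0)\geqslant u_0(x)$ or $Q(x-x_1)\geqslant u_0(x)$ (both $U^*$ and $Q$ are bounded above by $1$). The paper fixes this by replacing $Q$ with the wave $Q_A$ for a modified nonlinearity $f_A$ with $f_A(A)=0$ at a large level $A$, and by multiplying $U^*$ by a factor $(1+Ae^{-\delta t})$; see subsection~3.5. With this repair your bounds $h(t)\leqslant c^*(\beta)t+O(1)$ and $u(t,x)\leqslant Q_A(x-(\beta-c_0)t+O(1))$ hold.

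For $\beta>\beta^*$ your integral-identity route is a genuine and rather clean alternative to the paper's proof. Once you add the standard a priori bound $|g'(t)|\leqslant C$ (so that $h(t)-g(t)=O(t)$), the exponential decay of $\|u(t,\cdot)\|_\infty$ makes $\int_0^\infty\!\int_{I(t)} f(u)\,dx\,dt<\infty$, and the identity then bounds $h-g$ and hence both $g_\infty$ and $h_\infty$ in one stroke. The paper instead proves $g_\infty>-\infty$ and $h_\infty<\infty$ separately (Propositions~\ref{prop:g_infty>-infty} and~\ref{>=b^*-h_infty}) by building explicit cosine-profile supersolutions near each boundary after first obtaining decay of $u$ there; your argument bypasses those constructions.

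At the threshold $\beta=\beta^*$, however, your sketch has a real gap. You are right that the tadpole profile $V^*$ with $V^*(-\infty)=0$ ceases to exist there, and the $Q$-bound does force any putative limit in the moving frame to vanish at $-\infty$; but turning this into a rigorous $\omega$-limit statement for a \emph{free-boundary} problem with a moving right endpoint $\tilde h(t)$ is not standard and you have not supplied it. More seriously, even if you do conclude $\|u(t,\cdot)\|_\infty\to 0$, you obtain no decay \emph{rate}, so $\int_0^\infty\!\int_{I(t)} f(u)\,dx\,dt$ need not be finite and your integral identity no longer yields boundedness of $h-g$. The paper's argument at the threshold is quite different and does produce a rate: a zero-number comparison shows the semi-wave $U^*(x-c^*t+h_0)$ eventually lies strictly above $u$, and then one slips in a slightly slower tadpole wave $V^*_{\delta_1}$ (speed $\beta^*-c_0-\delta_1<c^*$, Lemma~\ref{lem:tadpole tw beta=beta*}) to force $h(t)\leqslant(\beta^*-c_0-\delta_1)t+C$. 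This restores exponential decay of $\|u\|_\infty$ via the $Q_A$ bound and reduces the problem to the strict case $\beta>\beta^*$.
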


Besides the convergence/dichotomy/trichotomy results on the long time behavior of the
solutions as stated in the previous theorems, we can say more about the solutions when
(virtual) spreading happens. It turns out that, when $\beta\in [c_0, \beta^*)$,
the virtual spreading solution can be characterized by the rightward traveling semi-wave
$U^*(x-c^* t)$ and the traveling wave $Q(x -(\beta-c_0)t)$; when $\beta\in (0,c_0)$, the spreading
solution can be characterized by $U^*(x-c^* t)$ and the leftward traveling semi-wave $U^*_l (x - c^*_l t)$.
Here $(c^*_l, U^*_l)$ (with $c_l^*<0$) is the unique
solution of the following problem with $\beta\in(0,c_0)$ (see details in subsection 3.3)
\begin{equation}\label{q*l}
\left\{
\begin{array}{ll}
q''(z) + (c-\beta) q'(z) + f(q)=0,\quad z \in (0,\infty),\\
q(0)=0,\ q(\infty)=1,\ -\mu q'(0)=c,\ q' (z)>0\mbox{ for }z\in (0, \infty).
\end{array}
\right.
\end{equation}
Using these traveling waves we can give the asymptotic profiles for (virtual) spreading
solutions.

\begin{thm}\label{thm:profile of spreading sol}
Assume spreading or virtual spreading happens for a solution of \eqref{p} as
in Theorems \ref{thm:small beta}, \ref{thm:middle beta} or \ref{thm:beta=c0}.
Let $(c^*,U^*)$ be the unique solution of \eqref{c*} with $c^*>0$.
\begin{itemize}
  \item [(i)] When $\beta\in(0,c_0)$, let $(c_l^*,U_l^*)$ be the unique solution of \eqref{q*l}
with $0< -c_l^* <c^*$. Then there exist $H_\infty$, $G_\infty\in\R$ such that
\begin{equation}\label{right spreading speed}
\lim\limits_{t\to\infty}[h(t)-c^*t] = H_\infty ,\quad \lim\limits_{t\to\infty}h'(t)=c^*,
\end{equation}
\begin{equation}\label{left spreading speed}
\lim\limits_{t\to\infty}[g(t)-c_l^*t] = G_\infty ,\quad \lim\limits_{t\to\infty}g'(t)=c_l^*,
\end{equation}
and, if we extend $U^*$, $U_l^*$ to be zero outside their supports we have
\begin{equation}\label{profile convergence 1}
\lim\limits_{t\to\infty} \left\| u(t,\cdot) - U^*(\cdot -c^*t - H_\infty) \cdot
U^*_l \big(\cdot - c^*_l  t - G_\infty \big)\right\|_{L^\infty (I(t))} =0.
\end{equation}
  \item [(ii)] When $\beta\in [c_0,\beta^*)$, \eqref{right spreading speed} holds
  for some $H_\infty\in\R$. Moreover, if we extend $U^*$ to be zero outside its support,
  then
\begin{equation}\label{profile convergence 2}
\lim\limits_{t\to\infty} \left\| u(t,\cdot)-
U^*(\cdot -c^* t- H_\infty) \cdot Q \big(\cdot - (\beta-c_0)t-\theta(t) \big)\right\|_{L^\infty (I(t))}=0
\end{equation}
for some function $\theta(t)$ satisfying $\theta(t)=o(t)$ and $\theta(t)\to \infty\ (t\to \infty)$.
\end{itemize}
\end{thm}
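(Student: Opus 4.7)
\medskip

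\textbf{Proof proposal.} My plan is to treat the two assertions in parallel and reduce each to (a) pinning down the location of the right (and possibly left) free boundary up to an $O(1)$ correction, and (b) an asymptotic profile statement obtained by compactness combined with Liouville-type rigidity for entire solutions. I would first upgrade the already-known locally uniform convergence $u(t,\cdot)\to 1$ (on any spreading compact set) to estimates of the positions of the fronts. For the rightward front, one compares $u$ with shifted semi-waves $U^*(x-c^*t-s)$: a supersolution of the form $U^*(x-c^*t-s_1)$ with large $s_1$ sits above $u$ initially and forces $h(t)\leqslant c^*t+s_1$; conversely, once $u$ is locally close to $1$ and $h(t)$ is large, one builds a subsolution slightly below $U^*(x-c^*t-s_2)$ that lies under $u$, giving $h(t)\geqslant c^*t-s_2$. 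This gives the order of magnitude $h(t)=c^*t+O(1)$. To refine this to $h(t)-c^*t\to H_\infty$ I would follow the zero-number strategy of \cite{DMZ}: the map $s\mapsto u(t,h(t))-U^*(h(t)-c^*t-s)$ changes sign in a controlled way as $t\to\infty$, and combined with the Sturmian monotonicity of the intersection number between $u(t,\cdot)$ and $U^*(\cdot-c^*t-s)$ this produces a unique limit $H_\infty$; the convergence $h'(t)\to c^*$ then follows from the Hopf lemma applied in a moving frame.

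For case (i) the leftward boundary is handled identically using the leftward semi-wave $(c_l^*,U_l^*)$ defined by \eqref{q*l}, giving $g(t)-c_l^*t\to G_\infty$ and $g'(t)\to c_l^*$. For case (ii), the back is not a semi-wave because the advection drives the left edge to the right at speed $\beta-c_0>0$ (in case $\beta>c_0$) or stalls it (in case $\beta=c_0$); here the natural template is the full Fisher-KPP traveling wave $Q(\cdot-(\beta-c_0)t-\theta)$. Since the equation has no free boundary on the ``left of the back'' (the solution simply decays to $0$ locally uniformly there), the shift $\theta(t)$ need only satisfy $\theta(t)=o(t)$ and $\theta(t)\to\infty$; I would define $\theta(t)$ implicitly, e.g. by $u(t,(\beta-c_0)t+\theta(t))=1/2$, and verify these two properties by monotone comparison with $Q(x-(\beta-c_0)t-s)$ from above (giving $\theta(t)\leqslant o(t)$) and from below via a compactly supported subsolution together with the hair-trigger effect of \cite{AW} in the moving frame $y=x-(\beta-c_0)t$ (giving $\theta(t)\to\infty$).

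With the positions of the fronts controlled, I would prove the uniform profile estimates \eqref{profile convergence 1} and \eqref{profile convergence 2} by a standard contradiction-and-compactness argument. Suppose the $L^\infty(I(t))$ norm of the difference does not tend to $0$; pick $t_n\to\infty$ and $x_n\in I(t_n)$ realising a definite gap. Up to a subsequence, depending on whether $x_n-c^*t_n$, $x_n-c_l^*t_n$ (case i), or $x_n-(\beta-c_0)t_n$ (case ii) stays bounded or tends to $\pm\infty$, parabolic $L^p$–Schauder estimates produce a limit entire-in-time solution $U_\infty$ of $u_t=u_{xx}-\beta u_x+f(u)$ on a half-line or on all of $\R$. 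In each regime the known boundary behavior, monotonicity, and the pinning by $H_\infty$, $G_\infty$ or $\theta(t)$ force $U_\infty$ to coincide with $U^*$, $U^*_l$, $Q$, or $1$ (using the uniqueness of the semi-wave from Lemma \ref{lem:semi-wave} and the Berestycki–Hamel–type Liouville result for monotone entire solutions of Fisher-KPP). Because $U^*$ and $U^*_l$ are close to $1$ away from their boundaries and $Q$ is close to $1$ far to the right, each limiting scenario matches the claimed product formula, contradicting the assumed gap.

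The main obstacle I foresee is the refinement from $h(t)=c^*t+O(1)$ to $h(t)-c^*t\to H_\infty$ (and likewise for $g$ and for $\theta(t)$ in case (ii)). The zero-number argument is delicate because the free boundary destroys conservation of nodal count unless one carefully embeds the solution into a comparison family for which the count is still monotone; in the present setting this forces one to track the two ends separately and to verify that the semi-wave family used near the right front does not interact with the back profile. Once this quantitative alignment is in hand, the profile statements \eqref{profile convergence 1}–\eqref{profile convergence 2} follow by the compactness/Liouville scheme outlined above.
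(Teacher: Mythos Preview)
Your outline for the right front and for case~(i) is essentially the approach the paper takes, citing \cite{DMZ,KM} for the $O(1)$-pinning of $h(t)-c^*t$ and $g(t)-c^*_lt$; there is nothing to add there.

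The genuine gap is in case~(ii), at the step where you identify the back profile. You propose to center at a level set, pass to a subsequential entire limit $U_\infty$, and then invoke a ``Berestycki--Hamel--type Liouville result'' to conclude $U_\infty=Q$. But the entire monotone solutions of $u_t=u_{xx}-\beta u_x+f(u)$ connecting $0$ to $1$ are exactly the family $Q(x-ct;c-\beta)$ for all $c\leqslant\beta-c_0$, not just the minimal-speed one; a Liouville-type classification will hand you a traveling wave, but not select its speed. Knowing $\chi(t)/t\to\beta-c_0$ does not help, because along a subsequence the local speed of the back need not equal the asymptotic average speed. The paper's key device here is the steepness lemma of \cite{DGM}: $Q(\cdot-(\beta-c_0)t)$ is \emph{steeper} than every other entire solution. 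To exploit this, the paper first proves a matching \emph{lower} bound on the slope of $u$ at the back, by intersection comparison with a one-parameter family of truncated tadpole-like waves $\widehat{V}(x-(\beta-c_0)t-\hat x)$; Hopf's lemma at the unique intersection point gives $u_x(t,\chi(t))\geqslant\widehat V_x$, and letting the tadpole approach $Q$ yields $w_x(0,0)\geqslant Q'(y^*)$ for any subsequential limit $w$. Steepness then forces $w\equiv Q$ up to a shift. Your outline contains no mechanism that produces this slope lower bound, so the identification of $Q$ is not justified.

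A secondary point: your compactness argument for the ``middle'' region (neither front) is underspecified. The paper does not argue by contradiction/compactness there; instead it places compactly supported traveling waves $W_\delta(x-(\beta-c_0+\delta)t-C)$ with maximum $D_\delta>1-\varepsilon$ under $u$ and lets them sweep across $I_c(t)$ (choosing two speeds $\delta_1>\delta_0>\delta_2$ to cover both halves). This gives $u\geqslant 1-\varepsilon$ on all of $I_c(t)$ directly, which is what glues the back and front profiles into the product formula \eqref{profile convergence 2}.
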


\noindent
Assume $\beta \in [0,c_0)$ and spreading happens for a solution $(u,g,h)$ of \eqref{p}.
The asymptotic spreading speed $\lim_{t\to \infty} \frac{h(t)}{t}=c^*$ was obtained in
\cite{DuLin,DuLou} for the case $\beta =0$, and in \cite{GLL2} for the case $\beta\in (0,c_0)$.
Recently, Du, Matsuzawa and Zhou \cite{DMZ}, Kaneko and Matsuzawa \cite{KM} improved
them to analogues of \eqref{right spreading speed}, \eqref{left spreading speed} and
\eqref{profile convergence 1}. Note that our theorem includes both the case
$\beta\in (0,c_0)$ and the case $\beta\in [c_0, \beta^*)$. The proof of
\eqref{profile convergence 2} will be given in the last section, based on the fact that
$Q$ is steeper than any other entire solution (see section 5 below and \cite{DGM}).

\section{Preliminaries}
In this section we first give some comparison principles and then
present all the bounded stationary solutions and traveling wave
solutions of \eqref{p}$_1$ which will be used for comparison. In the fourth subsection we give some results
on the zero numbers of the solutions of linear equations which will play key roles in our approach.
In the last subsection we give some precise upper bound estimates for the solutions.

\subsection{The comparison principle}
In this subsection we give two types of comparison principles which will be used frequently in this paper. Similar as \cite{DuLin,DuLou}, we have

\begin{lem}
\label{lem:comp1} Assume $T\in(0,\infty)$,
$\overline g(t),\ \overline h(t)\in C^1([0,T])$, $\overline u(t,x)\in C(\overline D_T)\cap C^{1,2}(D_T)$
with $D_T=\{(t,x)\in\R^2\mid0<t\leqslant T, \overline g(t)<x<\overline h(t)\}$, and
\begin{eqnarray*}
\left\{
\begin{array}{lll}
\overline u_{t} \geqslant \overline u_{xx} -\beta \overline u_x+f(\overline u),\; & 0<t \leqslant T,\
\overline g(t)<x<\overline h(t), \\
\overline u= 0,\quad \overline g'(t)\leqslant -\mu \overline u_x,\quad &
0<t \leqslant T, \ x=\overline g(t),\\
\overline u= 0,\quad \overline h'(t)\geqslant -\mu \overline u_x,\quad
&0<t \leqslant T, \ x=\overline h(t).
\end{array} \right.
\end{eqnarray*}
 If
\[
\mbox{$[-h_0, h_0]\subset [\overline g(0), \overline h(0)]$ \quad
and \quad $u_0(x)\leqslant \overline u(0,x)$ for $x\in[-h_0,h_0]$,}
\]
and $(u,g, h)$ is a solution of \eqref{p}, then
\[
\mbox{ $g(t)\geqslant \overline g(t),\; h(t)\leqslant\overline h(t)$\quad for $t\in(0,
T]$,}
\]
\[
\mbox{$u(t,x)\leqslant \overline u(t,x)$ \quad for $t\in (0, T]$ and $x\in (g(t), h(t))$.}
\]
\end{lem}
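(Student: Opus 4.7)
The plan is to follow the classical comparison argument for one-phase Stefan-type free boundary problems developed in \cite{DuLin, DuLou}: first I would use the interior parabolic maximum principle to control $w := \overline u - u$ on the common domain, and then I would rule out any crossing of the free boundaries by a Hopf-lemma argument at the first contact point, combined with the Stefan conditions.

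Concretely, I would set
\[
T^* := \sup\Bigl\{ t \in [0, T] : \overline g(s) \leqslant g(s) \text{ and } h(s) \leqslant \overline h(s) \text{ for all } s \in [0, t] \Bigr\},
\]
and aim to show $T^* = T$. On $\Omega^* := \{(t,x) : 0 < t \leqslant T^*,\ g(t) < x < h(t)\}$, the function $w$ satisfies the linear inequality
\[
w_t - w_{xx} + \beta w_x - c(t,x)\, w \geqslant 0, \qquad c(t,x) := \int_0^1 f'\bigl(\theta \overline u + (1-\theta) u\bigr)\, d\theta,
\]
with $c$ bounded. On the parabolic boundary: $w(0,\cdot) \geqslant 0$ holds by assumption, and on the lateral sides $x = g(t)$, $x = h(t)$ one has $u = 0$ while $\overline u \geqslant 0$ (the latter obtained by applying the parabolic maximum principle to $\overline u$ itself, using $f(0) = 0$ and the nonnegativity of its boundary data). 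The standard exponential-weight maximum principle then gives $w \geqslant 0$ on $\Omega^*$, yielding the pointwise ordering on $[0, T^*]$.

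To show $T^* = T$, suppose for contradiction that $T^* < T$. By continuity, at $t = T^*$ either $h(T^*) = \overline h(T^*)$ or $g(T^*) = \overline g(T^*)$; the two cases are symmetric, so consider the first. At the point $P := (T^*, h(T^*))$ both $u$ and $\overline u$ vanish, so $w(P) = 0$. Since $h(t) < \overline h(t)$ strictly on an interval $(T^* - \delta, T^*)$ (otherwise $T^*$ would not be supremal for the $h$-component), $w \not\equiv 0$ on any parabolic neighbourhood of $P$, and Hopf's lemma applied on the $C^1$ lateral boundary $x = h(t)$ gives the strict inequality $w_x(P) < 0$, that is, $\overline u_x(T^*, h(T^*)) < u_x(T^*, h(T^*))$. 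Combining with the Stefan inequalities for $\overline h$ and $h$ then yields
\[
\overline h'(T^*) \geqslant -\mu\, \overline u_x(T^*, h(T^*)) > -\mu\, u_x(T^*, h(T^*)) = h'(T^*).
\]
On the other hand, since $\overline h - h \geqslant 0$ on $[0, T^*]$ and vanishes at $T^*$, the one-sided derivative condition forces $\overline h'(T^*) \leqslant h'(T^*)$, a contradiction.

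The main technical obstacle I anticipate is the boundary case of equality at $t = 0$ (e.g.\ $\overline h(0) = h_0$), which prevents an immediate guarantee that $T^* > 0$ and also makes the ``strict separation for $t < T^*$'' needed for Hopf delicate to initiate. I would resolve this by a standard approximation: for small $\varepsilon > 0$ replace $(\overline g, \overline h)$ by the slightly enlarged curves $\overline g_\varepsilon(t) := \overline g(t) - \varepsilon t - \varepsilon$, $\overline h_\varepsilon(t) := \overline h(t) + \varepsilon t + \varepsilon$, and simultaneously lift $\overline u$ to a supersolution on the new domain (extending it by a small positive bump and adding an $\varepsilon e^{Kt}$ correction with $K$ chosen so the supersolution inequality is preserved). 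The strict version of the argument above then applies for each $\varepsilon > 0$, and letting $\varepsilon \to 0$ recovers the stated non-strict conclusion.
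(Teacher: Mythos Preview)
Your proposal is correct and follows precisely the approach the paper has in mind: the paper gives no self-contained proof, stating only that the argument is identical to Lemma~5.7 in \cite{DuLin}, and what you have written is exactly that classical comparison argument (interior maximum principle for $w=\overline u-u$, then Hopf at the first boundary contact combined with the Stefan conditions). One small remark: the approximation you propose to handle possible initial equality---enlarging $(\overline g,\overline h)$ and extending $\overline u$---works in principle but is fiddly to set up; the cleaner route used in \cite{DuLin} is to leave $(\overline u,\overline g,\overline h)$ untouched and instead replace $\mu$ by $\mu-\varepsilon$ in problem \eqref{p}, so that the perturbed solution $(u^\varepsilon,g^\varepsilon,h^\varepsilon)$ has strictly slower fronts and strict initial separation, then pass to the limit $\varepsilon\to 0$ via continuous dependence.
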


\begin{lem}
\label{lem:comp2} Assume $T\in(0,\infty)$, $l(t),\, k(t)\in C^1([0,T])$, $w(t,x)\in C(\overline D_T)\cap C^{1,2}(D_T)$
with $D_T=\{(t,x)\in\R^2\mid0<t\leqslant T, l(t)<x<k(t)\}$, and
\begin{eqnarray*}
\left\{
\begin{array}{lll}
w_{t}\geqslant  w_{xx}-\beta w_x+ f(w),\; &0<t \leqslant T,\ l(t)<x<k(t), \\
w\geqslant u, &0<t \leqslant T, \ x= l(t),\\
w= 0,\quad k'(t)\geqslant -\mu w_x,\quad &0<t \leqslant T, \ x=k(t),
\end{array} \right.
\end{eqnarray*}
with
\begin{equation}\label{com prin 2}
\mbox{$g(t)\leqslant l(t)\leqslant h(t) $ for $t\in[0,T]$,\quad $h_0\leqslant k(0),$\quad  $u_0(x)\leqslant w(0,x)$ for $x\in[l(0),h_0]$,}
\end{equation}
where  $(u,g, h)$ is a solution of \eqref{p}. Then
\[
\mbox{ $h(t)\leqslant k(t)$ for $t\in(0, T]$,\quad $u(t,x)\leqslant w(t,x)$ for $t\in (0, T]$\ and $ l(t)<x< h(t)$.}
\]
\end{lem}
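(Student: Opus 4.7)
The plan is to follow the standard comparison scheme for one-phase Stefan problems as in \cite{DuLin,DuLou}, adapted to the one-sided situation where $w$ is defined only on $[l(t),k(t)]\subset[g(t),\infty)$ and does not have a free boundary on the left. First I would perturb the data slightly: for small $\varepsilon>0$, construct a modified supersolution $(w_\varepsilon,k_\varepsilon)$ with $k_\varepsilon(0)>h_0$ strictly, $w_\varepsilon(0,\cdot)>u_0$ strictly on $[l(0),h_0]$, $w_\varepsilon(t,l(t))>u(t,l(t))$, and still satisfying the differential inequality on its domain together with the Stefan condition $k_\varepsilon'\ge-\mu(w_\varepsilon)_x$. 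At the end of the proof I would let $\varepsilon\to 0$. A convenient choice is a small outward $x$-shift of $w$ combined with a time-dependent multiplicative adjustment, though the precise ansatz is where the main technical care is needed.

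Next define
\[
\tau:=\sup\bigl\{t\in[0,T]:h(s)<k_\varepsilon(s)\text{ and }u(s,x)<w_\varepsilon(s,x)\text{ on }[l(s),h(s)]\text{ for all }s\in[0,t]\bigr\}
\]
and suppose for contradiction that $\tau<T$. On $[l(t),h(t)]$ for $t<\tau$, the difference $v:=w_\varepsilon-u$ satisfies the linear parabolic inequality
\[
v_t-v_{xx}+\beta v_x-c(t,x)\,v\ge 0,
\]
where $c(t,x)$ is bounded via the mean value expansion of $f(w_\varepsilon)-f(u)$. The boundary data satisfy $v(t,l(t))>0$ by the perturbation and $v(t,h(t))=w_\varepsilon(t,h(t))>0$ because $h(t)<k_\varepsilon(t)$ and $w_\varepsilon$ is positive in the interior of its domain. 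The strong maximum principle thus yields $v>0$ throughout.

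At time $\tau$ there are two possibilities. Either there is an interior point $x_0\in(l(\tau),h(\tau))$ at which $u(\tau,x_0)=w_\varepsilon(\tau,x_0)$, contradicting the strong maximum principle; or $h(\tau)=k_\varepsilon(\tau)$. In the latter case $u(\tau,h(\tau))=0=w_\varepsilon(\tau,k_\varepsilon(\tau))$ and $v\ge 0$ in a one-sided neighborhood of this right-boundary point, so Hopf's lemma produces the strict inequality $(w_\varepsilon)_x(\tau,k_\varepsilon(\tau))<u_x(\tau,h(\tau))$. Multiplying by $-\mu<0$ and invoking the Stefan conditions gives
\[
k_\varepsilon'(\tau)\ge-\mu(w_\varepsilon)_x(\tau,k_\varepsilon(\tau))>-\mu u_x(\tau,h(\tau))=h'(\tau),
\]
which contradicts the geometric fact that $h(t)<k_\varepsilon(t)$ on $[0,\tau)$ with equality at $\tau$ forces $h'(\tau)\le k_\varepsilon'(\tau)$. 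Hence $\tau=T$, and the conclusion follows after sending $\varepsilon\to 0$.

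The main obstacle is designing the perturbation $(w_\varepsilon,k_\varepsilon)$ so that all three properties in the hypothesis (the PDE inequality, the Stefan condition at $x=k(t)$, and the ordering $w\ge u$ at the internal boundary $x=l(t)$) are preserved after perturbation and are strict at $t=0$ and on $\{x=l(t)\}$. A time-dependent spatial shift together with a small multiplicative factor handles this, as in the analogous two-sided argument in \cite{DuLin,DuLou}; the present one-sided setting actually simplifies the bookkeeping since no left free boundary of $w$ needs to be tracked. Once the perturbation is in place, the remaining steps (strong maximum principle and Hopf's lemma) are standard.
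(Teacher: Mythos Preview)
Your approach is exactly the standard perturbation/Hopf argument the paper has in mind when it says Lemma~\ref{lem:comp2} follows by a minor modification of the proof of \cite[Lemma~5.7]{DuLin}. One small slip: the geometric constraint at $t=\tau$ should read $h'(\tau)\ge k_\varepsilon'(\tau)$ (since $k_\varepsilon-h>0$ on $[0,\tau)$ decreases to $0$ at $\tau$), and it is this inequality that your strict conclusion $k_\varepsilon'(\tau)>h'(\tau)$ actually contradicts.
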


The proof of Lemma \ref{lem:comp1} is identical to that of Lemma 5.7
in \cite{DuLin}, and a minor modification of this proof yields Lemma
\ref{lem:comp2}.

\begin{remark}
\rm The function $\overline u$, or the triple $(\overline u,\overline g,\overline h)$
in Lemmas \ref{lem:comp1} and the function $w$, or the triple
$(w,l,k)$ in Lemma \ref{lem:comp2} are often called the upper solutions of \eqref{p}.
There is a symmetric version of Lemma~\ref{lem:comp2}, where the conditions on the left and
right boundaries are interchanged.
The lower solutions can be defined analogously by reversing all the inequalities except for
$g(t)\leqslant l(t)\leqslant h(t)$ in \eqref{com prin 2}.  We also have corresponding comparison
results for lower solutions in each case.
\end{remark}

\subsection{Phase plane analysis and stationary solutions}\label{subsec:phase plane}
We first use the phase plane analysis to study the following equation
\begin{equation}\label{phase sol}
q''(z) + \gamma q'(z)+f(q)=0,\quad q(z) \geqslant  0 \quad \mbox{ for } z\in J,
\end{equation}
where $J$ is some interval in $\R$.
Note that a nonnegative stationary solution $u$ of \eqref{p}$_1$ solves
\eqref{phase sol} with $\gamma = - \beta$, a nonnegative traveling wave
$u(t,x)= q(x-ct)$ of \eqref{p}$_1$ solves \eqref{phase sol} with
$\gamma = c-\beta$ and $J=\R$.

The equation \eqref{phase sol} is equivalent to the system
\begin{equation}\label{q-p}
\left\{
\begin{array}{l}
q'(z)=p,\\ p'(z)= - \gamma  p- f(q).
\end{array}
\right.
\end{equation}
A solution $(q(z), p(z))$ of this system traces out a trajectory in the $q$-$p$ phase plane (cf.
\cite{AW,DuLou,Pet}). Such a trajectory has slope
\begin{equation}\label{Pq}
\frac{\mathrm{d}p}{\mathrm{d}q}= - \gamma  -\frac{f(q)}{p}
\end{equation}
at any point where $p\neq0$.
It is easily seen that $(0,0)$ and $(1,0)$ are two singular points on the phase plane.
We are only interested in the case $\gamma < c_0 := 2\sqrt{f'(0)}$. For such a $\gamma$,
the eigenvalues of the corresponding linearizations at the singular points are
$$
\lambda_{0}^{\pm}=\frac{-\gamma \pm\sqrt{\gamma^2 -4f'(0)}}{2}\ \ (\mbox{at } (0,0))\quad
\mbox{and} \quad
\lambda_{1}^{\pm}=\frac{-\gamma \pm\sqrt{\gamma^2 -4 f'(1)}}{2}\ \ (\mbox{at } (1,0)),
$$
respectively. Since $f'(0)>0$ and $f'(1)<0$, $(1,0)$ is a saddle point, $(0,0)$ is a center
when $\gamma=0$, or a focus when $0<|\gamma|<c_0$, or a node when $\gamma \leqslant  - c_0$.
By the phase plane analysis (cf. \cite{AW,DuLou,Pet}), it is not difficult to give all
kinds of bounded, nonnegative solutions of \eqref{phase sol} for $\gamma <c_0$ (see Figure 1).

\medskip
{\rm (i)} \textbf{Constant solutions:} $q\equiv 0$ and $q\equiv 1$.

\medskip
{\rm (ii)} \textbf{Strictly decreasing solutions on the half-line in case $\gamma<c_0$:}
$q(\cdot)=U(\cdot-z_0;\gamma)$
for any $z_0\in\R$, where $U\in C^2((-\infty,0])$ is the unique solution of \eqref{phase sol}
in $(-\infty,0)$, with $U(0;\gamma)=0$, $U(-\infty;\gamma)=1$ and $U'(\cdot;\gamma)<0$ in
$(-\infty,0]$ {\rm (}see  $\Gamma_1$ and $\Gamma_5$ in Figure 1{\rm )}.
Denote
$$
P(\gamma) := -\mu U'(0;\gamma).
$$
Using the comparison principle for the ordinary
differential equation \eqref{Pq} we have $P'(\gamma)<0$ for $\gamma\in (-\infty,c_0)$,
$P(c_0 -0) =0$ and $P(-\infty)= +\infty$ (see Figure 2 (a)).

\medskip
{\rm (iii)} \textbf{Strictly increasing solutions on the half-line in case $\gamma\in(-c_0,c_0)$:}
$q(\cdot)=U_l (\cdot-z_0;\gamma)$ for any $z_0\in\R$, where $U_l \in C^2([0,\infty))$ is the unique
solution of \eqref{phase sol} in $(0,\infty)$, with $U_l (0;\gamma)=0$, $U_l (\infty;\gamma)=1$ and
$U'_l (\cdot;\gamma)>0$ in $[0,+\infty)$ {\rm (}see $\Gamma_4$ in Figure 1 (a){\rm )}.

\medskip
{\rm (iv)} \textbf{Solutions with compact supports in case $\gamma\in(-c_0,c_0)$:}
$q(\cdot)= W(\cdot - z_0;b,\gamma)$ for any $z_0\in\R$, where for each $b \in (0,P(\gamma))$,
there exists a unique $L(b,\gamma) >0$ such that $W \in C^2([-L(b,\gamma), 0])$ is the unique
solution of \eqref{phase sol} in $(-L(b,\gamma),0)$ with $W(-L(b,\gamma);b,\gamma)=W(0;b,\gamma)=0$
and $b = -\mu W'(0;b,\gamma)$ {\rm (}see $\Gamma_2$ and $\Gamma_3$ in Figure 1 (a){\rm )}.
Each point $(\gamma, b)$ in the set
$S_1:= \{(\gamma,b) \mid 0<b<P(\gamma),\; -c_0 <\gamma <c_0\}$
in Figure 2 (a) corresponds to such a compactly supported solution $W(z;b,\gamma)$.

\begin{figure}[!htbp]
\begin{center}
\includegraphics[scale=0.18]{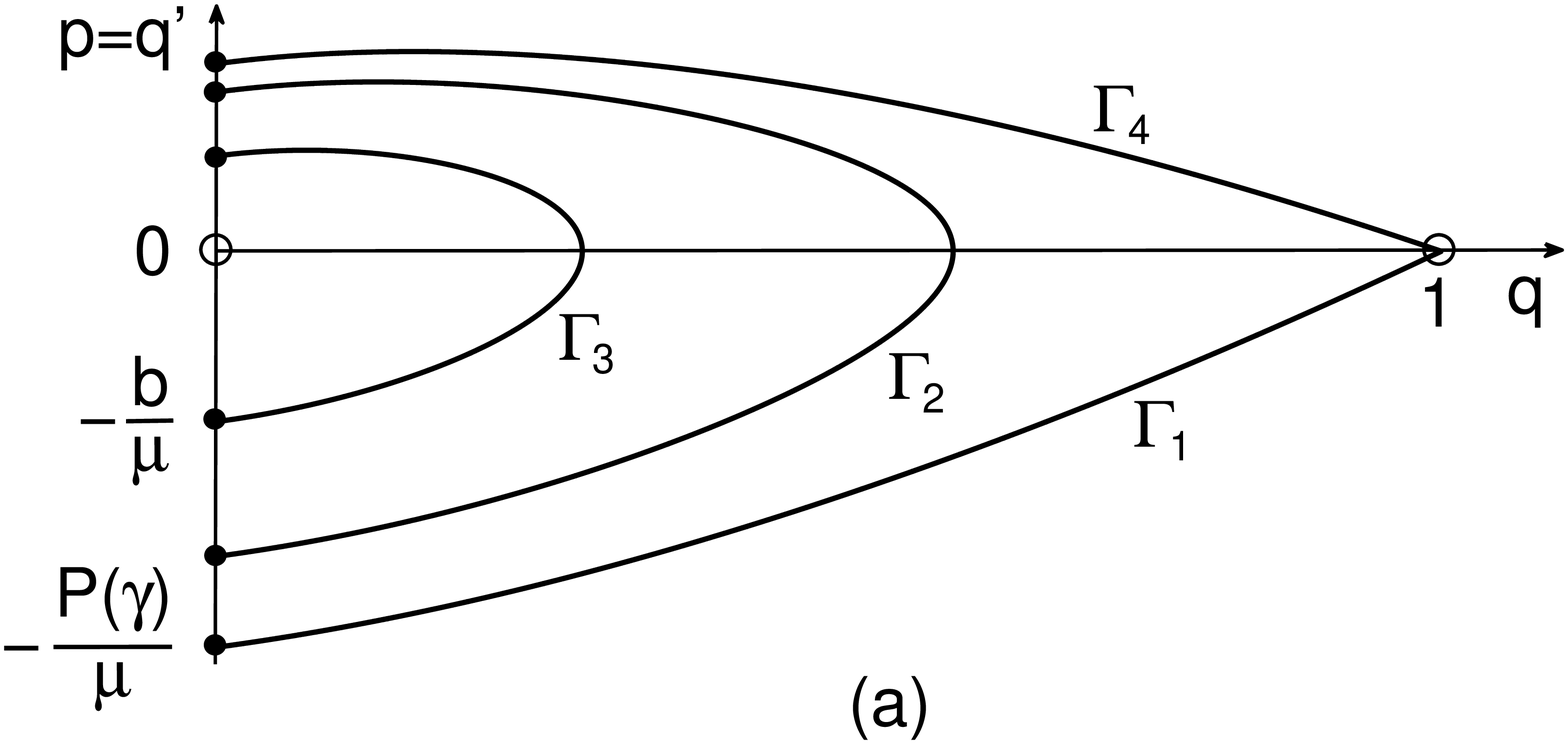}
\includegraphics[scale=0.18]{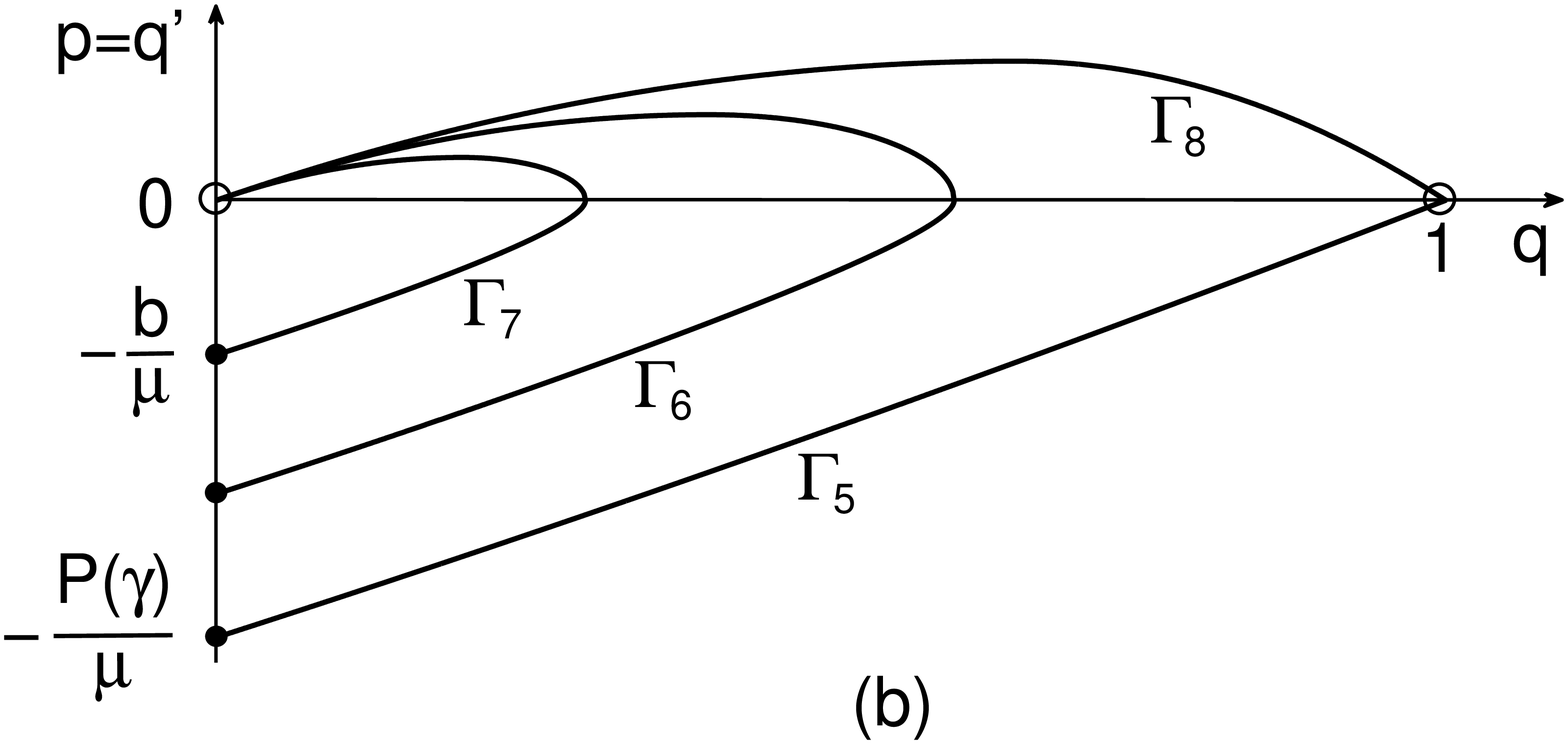}
\end{center}
\caption{\small{Trajectories corresponding to the equation $q''+\gamma q'+f(q)=0$.
(a) The case $\gamma\in(-c_0,c_0)$; (b) the case $\gamma\leqslant  -c_0$.}}
\end{figure}

\begin{figure}[!htbp]
\begin{center}
\includegraphics[scale=0.21]{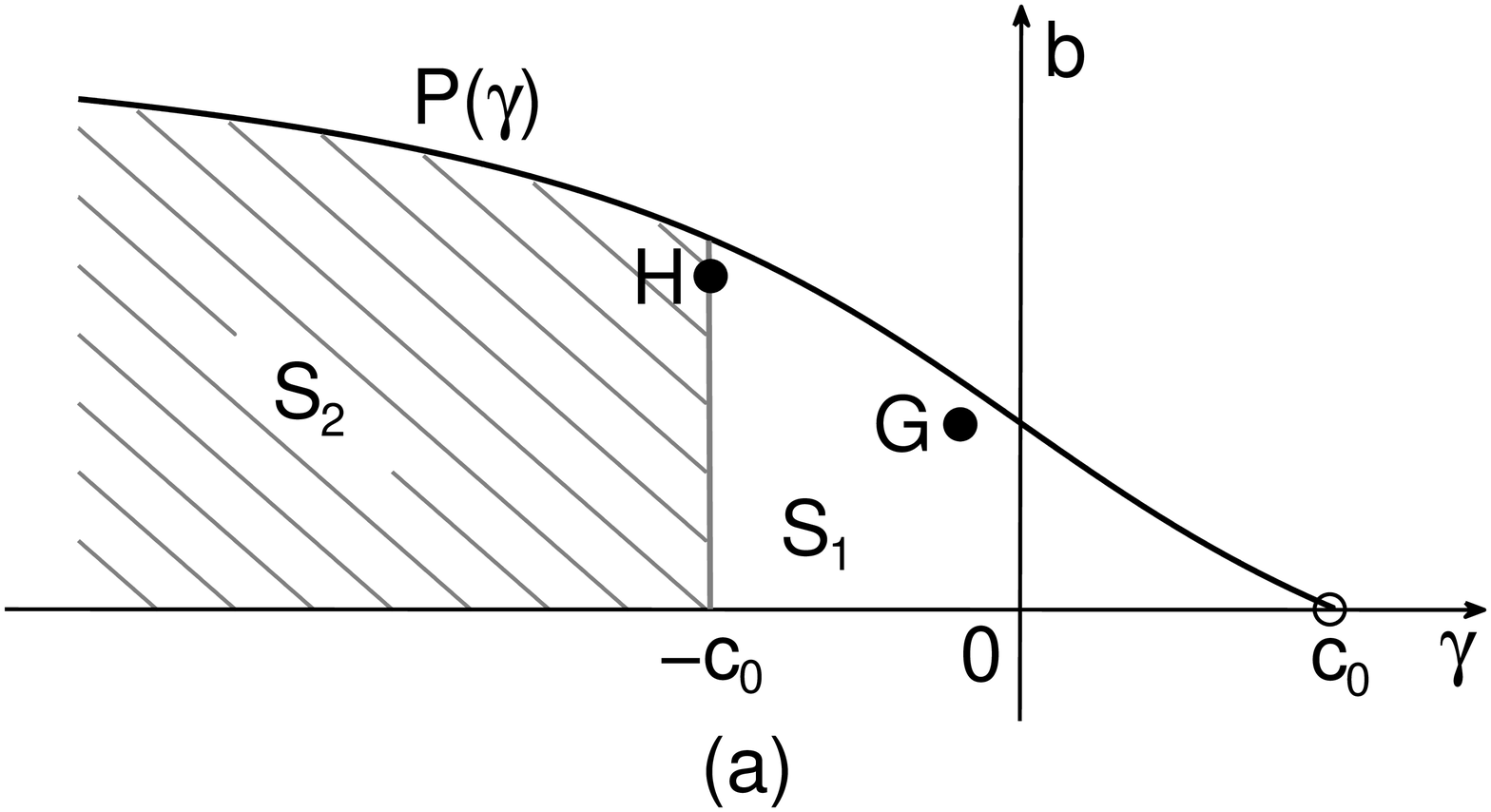}
\includegraphics[scale=0.21]{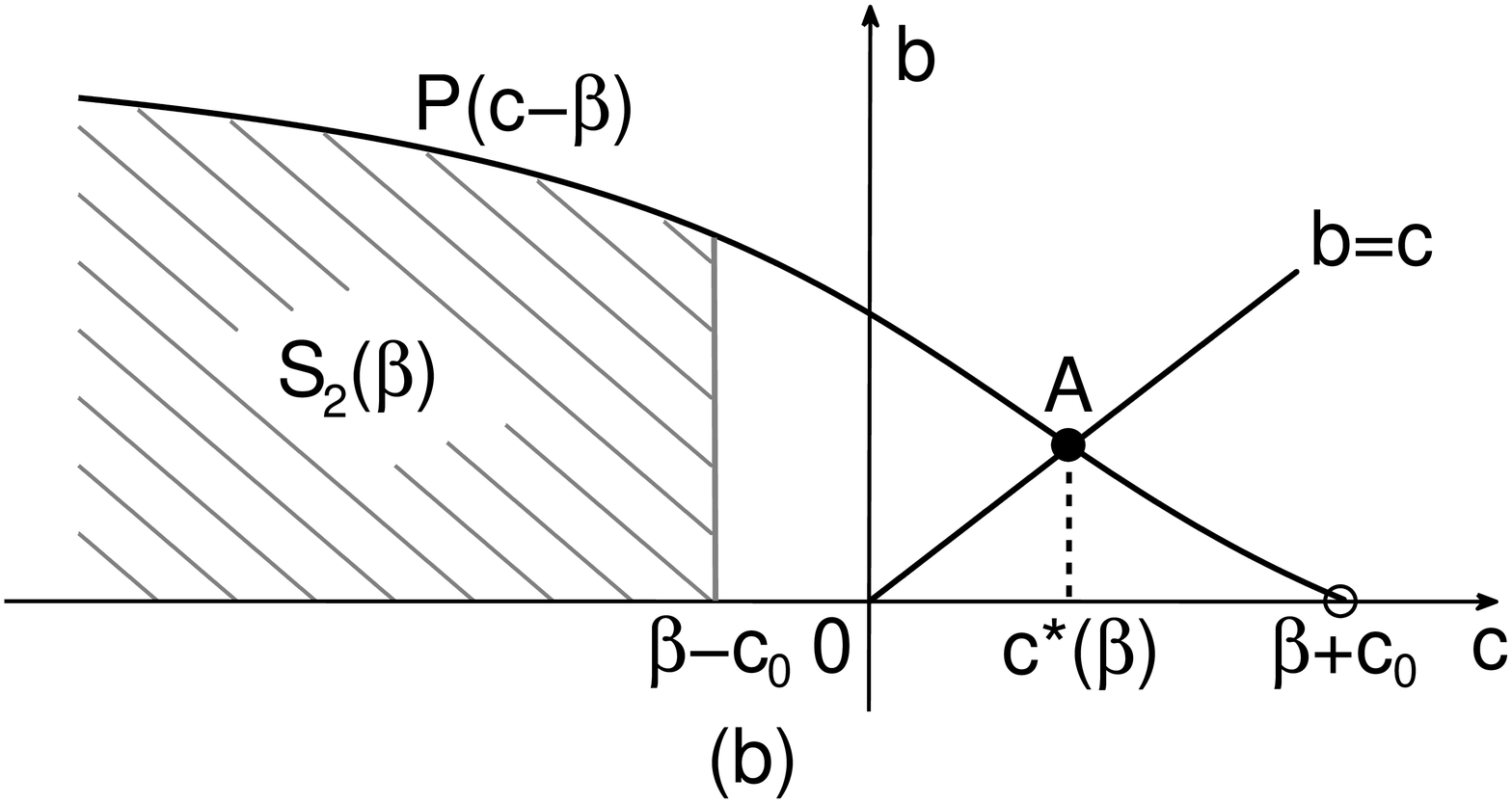}
\end{center}
\caption{\small{(a) The $\gamma$-$b$ plane about stationary solutions:
each point in $S_1$ (resp. $S_2$) corresponds to a compactly supported solution
(resp. a tadpole-like solution);
(b) the $c$-$b$ plane about traveling waves, point $A$ corresponds to a rightward traveling
semi-wave satisfying both the equation and Stefan boundary condition.}}
\end{figure}

\begin{figure}[!htbp]
\begin{center}
\includegraphics[scale=0.21]{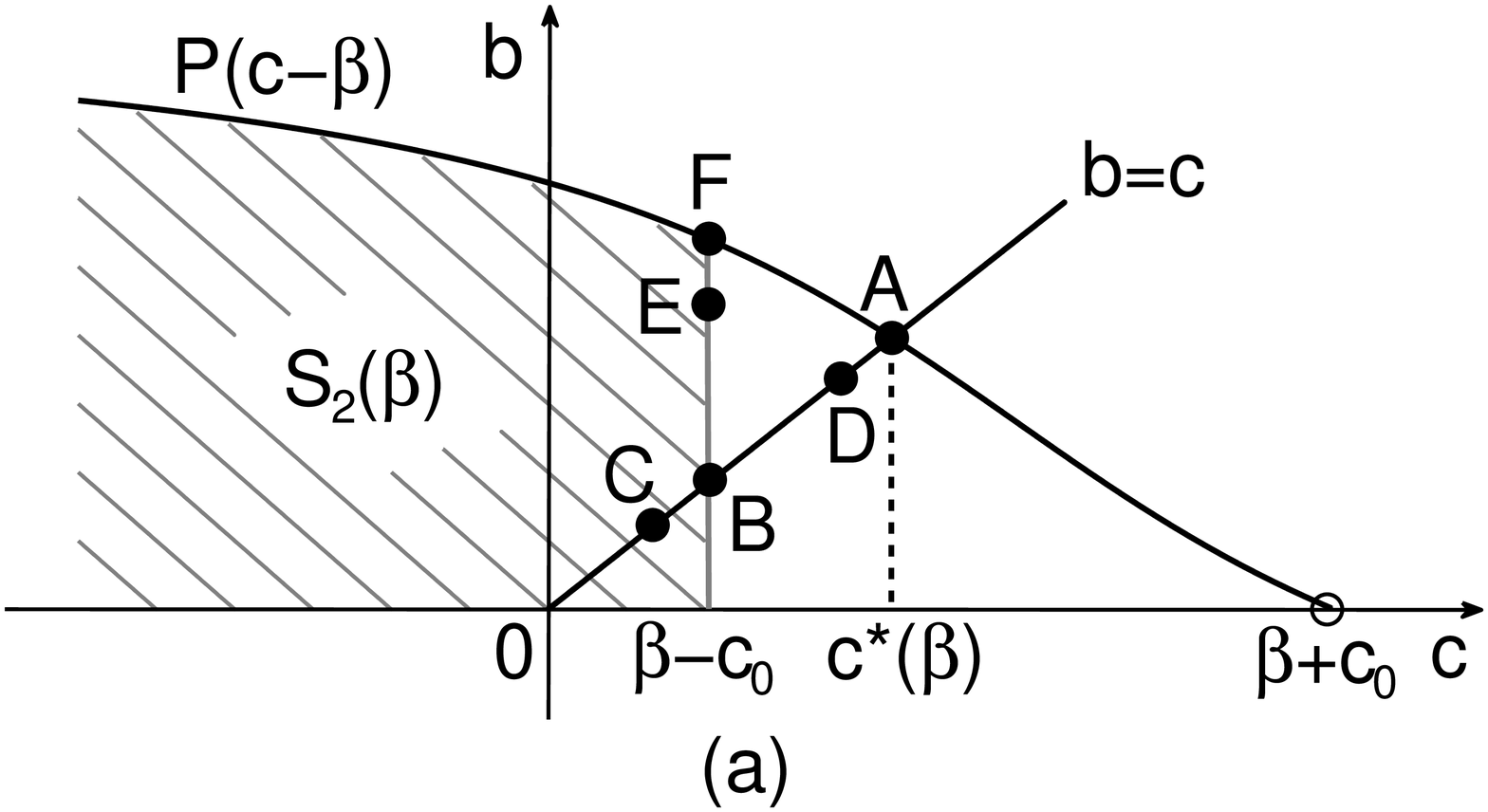}
\includegraphics[scale=0.21]{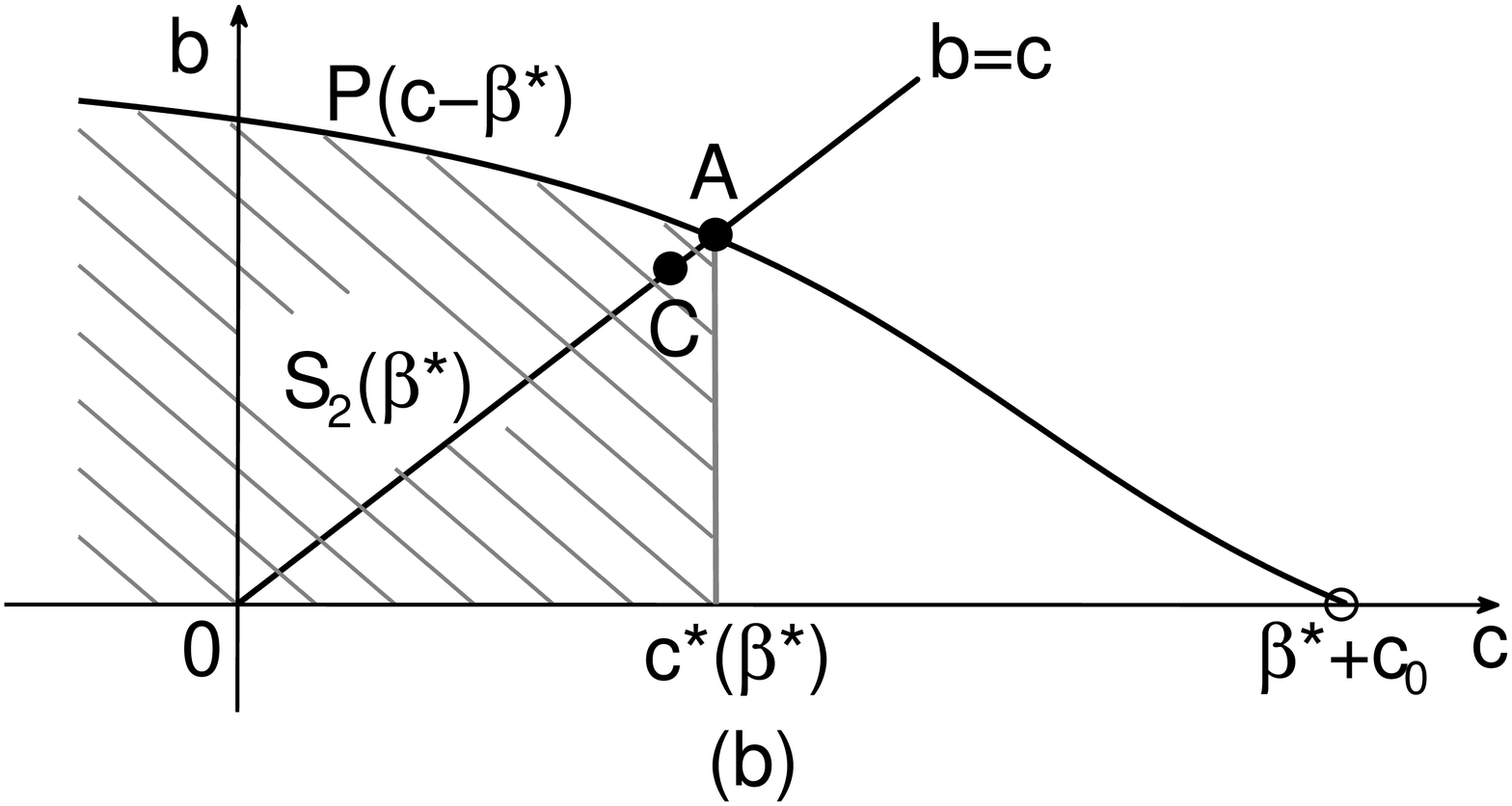}
\end{center}
\caption{\small{Points $A$ and $F$ correspond to
traveling semi-waves, points $B,\ C,\ E$ correspond to tadpole-like traveling waves, point $D$
corresponds to a compactly supported traveling wave, the waves denoted by $A,\ B,\ C,\ D$ satisfy
Stefan boundary condition. (a) The case $c_0\leqslant\beta<\beta^*$; (b) the case $\beta=\beta^*$.}}
\end{figure}

\medskip
{\rm (v)} \textbf{Tadpole-like solutions in case $\gamma \leqslant  -c_0$:}
$q(\cdot)=V(\cdot-z_0;b,\gamma)$ for any $z_0\in\R$, where for each
$b \in (0,P(\gamma))$, $V\in C^2((-\infty,0])$ is the
unique solution of \eqref{phase sol} in $(-\infty,0)$ with $V (0;b,\gamma)=0$,
$V(-\infty;b,\gamma)=0$ and $b =-\mu V'(0;b,\gamma)$ {\rm (}see $\Gamma_6$ and $\Gamma_7$ in Figure 1
(b){\rm )}. Each point $(\gamma, b)$ in the set
$S_2:= \{(\gamma,b) \mid 0<b<P(\gamma),\; \gamma \leqslant  -c_0\}$ in Figure 2 (a) corresponds to
such a tadpole-like solution $V(z;b,\gamma)$.

We call $V$ a {\it tadpole-like solution} since its graph has a big \lq\lq head"
and a boundary on the right side, and an infinite long \lq\lq tail" on the left side. Similarly,
when we construct a traveling wave with the form $V(x-ct;b,\gamma)$, we
call it a {\it tadpole-like traveling wave}.

\medskip
{\rm (vi)} \textbf{Strictly increasing solutions in $\R$ in case $\gamma\leqslant  -c_0$:}
$q(\cdot)=Q(\cdot-z_0;\gamma)$
 for any $z_0\in\R$, where $Q\in C^2(\R)$ is the unique solution of \eqref{phase sol}
in $\R$ with $Q (-\infty;\gamma)=0$, $Q (\infty ;\gamma)=1$, $Q(0;\gamma)=1/2$ and
$Q'(z;\gamma) >0$ in $\R$ {\rm (}see $\Gamma_8$ in Figure 1 (b){\rm )}.

\medskip

Each nonnegative stationary solution of \eqref{p}$_1$ is a solution of the problem
\eqref{phase sol} with $\gamma =-\beta$.
Using the above results we see that, when $\beta\in (0,c_0)$,
a bounded, nonnegative stationary solution of \eqref{p}$_1$ is either $0$, or $1$,
or a strictly decreasing solutions $U(\cdot-z_0;-\beta)\ (z_0\in \R)$ defined
on $(-\infty, z_0]$, or a compactly supported solution $W(\cdot-z_0;b,-\beta)\
(z_0\in \R)$ for some $b\in (0,P(-\beta))=(0,-\mu U'(0;-\beta))$, or a strictly increasing solution
$U_l (\cdot -z_0; -\beta)\ (z_0\in \R)$ defined on $[z_0, \infty)$. When $\beta\geqslant  c_0$,
a bounded, nonnegative stationary solution of \eqref{p}$_1$
is either $0$, or $1$, or a strictly decreasing solutions $U(\cdot-z_0;-\beta)\ (z_0\in \R)$
defined on $(-\infty, z_0]$, or a tadpole-like function $V(\cdot-z_0;b,-\beta)\
(z_0\in \R)$ for some $b\in (0,P(-\beta))=(0,-\mu U'(0;-\beta))$, or a strictly increasing solution
$Q(\cdot-z_0; -\beta)\ (z_0\in \R)$ defined in $\R$.

\subsection{Traveling waves}
If $u(t,x)=q(x-ct)$ is a traveling wave of $u_t=u_{xx} -\beta u_x +f(u)$,
then $(c,q)$ solves \eqref{phase sol} with $\gamma =c- \beta$, that is,
\begin{equation}\label{eq tw}
q''(z) + (c-\beta)q'(z) +f(q)=0.
\end{equation}
In this paper we will use several types of traveling waves which are specified
now.

\medskip

{\bf (I) Traveling wave} $Q(x-ct;c-\beta)$ for any $c\leqslant  \beta-c_0$, where
$q(z)=Q(z;c-\beta)$ satisfies \eqref{eq tw} and
\begin{equation}\label{tw R}
q(-\infty )=0,\ q(\infty)=1,\ q(0)=\frac{1}{2},\ q'(z)>0 \mbox{ for } z\in \R.
\end{equation}
The existence of such solutions has been given in the previous subsection.

\medskip
{\bf (II) Rightward traveling semi-wave} $U^* (x-c^*t)$ with $c^* \in(0,c_0+\beta)$,
where $q(z)= U^*(z) := U(z;c^*-\beta)$ satisfies \eqref{eq tw} with $c=c^*$ and
\begin{equation}\label{traveling semi-wave}
q(0)=0,\ q(-\infty)=1,\ -\mu q'(0) = c^*,\ q'(z)<0 \mbox{ for } z\in (-\infty, 0],
\end{equation}
that is, $(c^*, U^* (z) )$ is a solution of \eqref{c*} (cf. point A in Figure 2 (b) and in
Figure 3). $U^*(x-c^*t)$ is called a traveling semi-wave as in \cite{DuLou} since $U^*(z)$
is defined only on the half-line $(-\infty,0]$.

\begin{lem}\label{lem:semi-wave}
Assume $\beta>0$. Then
\begin{itemize}
\item[{\rm (i)}] there exists a unique $c^*=c^*(\beta) \in (0,c_0+\beta)$ such that
the problem \eqref{eq tw} and \eqref{traveling semi-wave} with $c=c^*$ has a solution,
which is unique  and denoted by $U^*(z)$;

\item[{\rm (ii)}] $0<\frac{\rm d}{{\rm d}\beta} c^*(\beta)<1$ for $\beta>0$;

\item[{\rm (iii)}] there exists a unique $\beta^* > c_0$ such that
\begin{equation}\label{def beta *}
c^*(\beta) -\beta +c_0>0 \ \ {\rm (} \mbox{resp. }  =0,\ <0 {\rm )}\ \mbox{ when } \ \beta <\beta^*
\ \ {\rm (} \mbox{resp. } \beta =\beta^*,\ \beta >\beta^* {\rm )}.
\end{equation}
\end{itemize}
\end{lem}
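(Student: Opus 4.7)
The plan is to reduce all three parts to the scalar identity $P(c-\beta)=c$, where $P(\gamma):=-\mu U'(0;\gamma)$ is the function from the phase plane analysis of subsection \ref{subsec:phase plane}, which is strictly decreasing on $(-\infty,c_0)$ with $P(c_0-0)=0$ and $P(-\infty)=+\infty$. Indeed, since the only monotone decreasing solution of the ODE in \eqref{c*} on a half-line with $q(-\infty)=1$ and $q(0)=0$ is (a translate of) $U(\cdot;c-\beta)$, the Stefan condition $-\mu q'(0)=c$ becomes exactly $P(c-\beta)=c$.

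For part (i) I introduce $F(c):=P(c-\beta)-c$ on $c\in(-\infty,c_0+\beta)$. The function $F$ is strictly decreasing because $P$ is, and one has $F(0)=P(-\beta)>0$ (since $-\beta<c_0$) while $F(c)\to -(c_0+\beta)<0$ as $c\to(c_0+\beta)^-$. The intermediate value theorem then yields a unique root $c^*(\beta)\in(0,c_0+\beta)$, and this in turn gives the unique semi-wave $U^*=U(\cdot;c^*-\beta)$.

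For part (ii) I apply the implicit function theorem to $F(c,\beta)$. Since $F_c=P'(c-\beta)-1<0$ strictly (as $P'<0$), $c^*(\beta)$ is $C^1$ and
\begin{equation*}
(c^*)'(\beta)=-\frac{F_\beta}{F_c}=\frac{-P'(c^*-\beta)}{1-P'(c^*-\beta)}.
\end{equation*}
Writing $a:=-P'(c^*-\beta)>0$, the right-hand side equals $a/(1+a)\in(0,1)$, which gives both bounds in part (ii) simultaneously.

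For part (iii) set $G(\beta):=c^*(\beta)-\beta+c_0$; then $G'(\beta)=(c^*)'(\beta)-1<0$ by part (ii), so $G$ is strictly decreasing on $(0,\infty)$. At $\beta=c_0$ one has $G(c_0)=c^*(c_0)>0$. To drive $G$ negative for large $\beta$ I reparametrize by $\gamma:=c^*(\beta)-\beta$: the defining equation becomes $H(\gamma):=P(\gamma)-\gamma=\beta$, where $H$ is strictly decreasing on $(-\infty,c_0)$ with $H(-\infty)=+\infty$ and $H(c_0^-)=-c_0$. Hence $\gamma(\beta)\to-\infty$ as $\beta\to+\infty$, and $G(\beta)=\gamma(\beta)+c_0\to-\infty$. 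Continuity and strict monotonicity now deliver a unique $\beta^*>c_0$ with $G(\beta^*)=0$, and the trichotomy \eqref{def beta *} is immediate from the sign of $G$ on either side of $\beta^*$. The only input that is not elementary scalar calculus is the qualitative behavior of $P$ cited from subsection \ref{subsec:phase plane}, so I do not anticipate any substantive obstacle beyond bookkeeping.
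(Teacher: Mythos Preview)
Your proof is correct and follows essentially the same route as the paper: both reduce the problem to the scalar equation $P(c-\beta)=c$, use the monotonicity of $P$ and the intermediate value theorem for (i), and differentiate implicitly to obtain $(c^*)'(\beta)=\frac{-P'}{1-P'}\in(0,1)$ for (ii). The only minor stylistic difference is in (iii): the paper exhibits $\beta^*$ explicitly as $\beta^*=P(-c_0)+c_0$ and then checks that $c^*(\beta^*)=\beta^*-c_0$, whereas you obtain $\beta^*$ by showing $G(\beta)=c^*(\beta)-\beta+c_0$ is strictly decreasing with $G(c_0)>0$ and $G(\beta)\to-\infty$; both arguments are equivalent and rest on the same monotonicity from (ii).
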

\begin{proof}
(i) For any $c< c_0 +\beta$, the problem \eqref{phase sol} with
$\gamma= c-\beta <c_0$ has a unique strictly decreasing solution $q(\cdot)=U(\cdot;c-\beta)$ in
$(-\infty,0]$, satisfying $U(0;c-\beta )=0$, $U(-\infty;c-\beta )=1$ and $U'(\cdot;c-\beta)<0$
in $(-\infty,0]$. Denote $P(c-\beta) := -\mu U'(0;c-\beta)$ as above, then $P(c-\beta)$
is strictly decreasing in $c\in (-\infty ,c_0+\beta)$,
$$
(P(c-\beta) - c) \big|_{c=0} = P(-\beta)>0\quad \mbox{and} \quad
(P(c-\beta) - c) \big|_{c=c_0+\beta-0} = -c_0 -\beta <0
$$
(see Figure 2 (b)). Hence the equation $P(c-\beta)=c$ has a unique root
$c=c^*(\beta)\in (0,c_0+\beta)$, that is,
\begin{equation}\label{eq P c}
c^*(\beta) = P( c^*(\beta) -\beta)= -\mu U'(0; c^*(\beta) -\beta).
\end{equation}

(ii) Differentiating $P( c^*(\beta) -\beta) = c^*(\beta)$ in $\beta$ and using
the fact $P'(\gamma)<0$ for $\gamma<c_0$ we have
$$
\frac{{\rm d} c^*(\beta)}{{\rm d}\beta} = \frac{-P'( c^*(\beta) -\beta)}
{1 - P'(c^*(\beta) -\beta ) } \in (0,1).
$$

(iii) Set $\beta^*:= P(-c_0)+c_0 >c_0$. Then $c=\beta^* -c_0$ is a root of
$P(c-\beta^*) =c$ in $(0,c_0+\beta^*)$. By the definition of $c^*(\beta)$ and by its uniqueness
we have $\beta^*-c_0 =c^*(\beta^*)$.
Moreover, the inequalities in (ii) shows that the function $c^*(\beta)-\beta +c_0$ is strictly
decreasing in $\beta>0$ and so it has a unique zero $\beta^*$.
This proves \eqref{def beta *}.
\end{proof}

\medskip

{\bf (III) Leftward traveling semi-wave} $U^*_l (x - c^*_lt)$ in case $\beta\in (0,c_0)$,
where $c_l^*=c^*_l (\beta) \in (\beta -c_0, 0)$, $q(z)= U^*_l (z) := U_l (z;c^*_l-\beta)$
satisfies \eqref{eq tw} with $c=c^*_l$ and
\begin{equation}\label{traveling semi-wave-left}
q(0)=0,\ q(\infty)=1,\ -\mu q'(0) = c^*_l,\ q'(z)>0 \mbox{ for } z\in [0, \infty).
\end{equation}

\noindent
For any given $\beta\in (0,c_0)$, the existence and uniqueness of such a solution
can be proved as in Lemma \ref{lem:semi-wave} (i).

\medskip

{\bf (IV) Tadpole-like traveling wave} $V(x-ct; b, c-\beta)$ in case $\beta > c_0$.
For any $c\in (0,\beta-c_0]$ and any $b\in (0,P(c-\beta))$, $V(x-ct;b,c-\beta)$ is a
tadpole-like traveling wave if the function $q(z):=V(z;b,c-\beta)$ satisfies \eqref{eq tw} and
\begin{equation}\label{tadpole tw0}
q(0)= q(-\infty)=0,\ q(z)>0 \mbox{ for } z\in (-\infty, 0) \ \mbox{and} \ -\mu q'(0) = b
\end{equation}
(cf. points $B,\ C,\ E$ in Figure 3 (a)). In particular, when $b=c$, the function
$V(z;c,c-\beta)$ is a solution of \eqref{eq tw} and
\begin{equation}\label{tadpole tw}
q(0)= q(-\infty)=0,\ q(z)>0 \mbox{ for } z\in (-\infty, 0) \ \mbox{and} \ -\mu q'(0) = c
\end{equation}
(cf. points $B,\ C$ in Figure 3 (a)). On the existence of such solutions we have the following results.

\begin{lem}\label{lem:tadpole tw bata<beta*}
Let $\beta^*$ be the constant given in Lemma \ref{lem:semi-wave}. Assume $c_0<\beta<\beta^*$. Then

\begin{itemize}
\item[\rm (i)] for any $b\in (0,P(-c_0))$, \eqref{tadpole tw0} and \eqref{eq tw} with $c=\beta-c_0$
has a unique tadpole-like solution $V(z;b,-c_0)$ (cf. points $B,\ E$ in Figure 3 (a)). Moreover,
there exists $z_b <0$ such that
\begin{equation}\label{V to Q}
V(\cdot + z_b; b,-c_0)\to Q(\cdot) \mbox{ locally uniformly in } \R,\quad \mbox{as } b\to P(-c_0);
\end{equation}

\item[\rm (ii)] $q(z) =V^*(z):= V(z; \beta -c_0,-c_0)$ is the unique tadpole-like
solution of \eqref{eq tw} and \eqref{tadpole tw} with $c=\beta-c_0$, that is, the unique solution
of \eqref{vv};

\item[\rm (iii)] for any $\delta\in (0,\beta-c_0)$,
$q(z)=V_{\delta} (z):= V(z; \beta-c_0-\delta, -c_0-\delta)$ is a tadpole-like solution of
\eqref{eq tw} and \eqref{tadpole tw} with $c=\beta-c_0-\delta$. Moreover,
$V_\delta(z)\to V^*(z)$ locally uniformly in $(-\infty, 0]$ as $\delta \to 0$ (cf. point $C$
in Figure 3 (a)).
\end{itemize}
\end{lem}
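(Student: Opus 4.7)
The strategy is to translate all three assertions into statements about trajectories of the phase plane system $q'=p,\ p'=-\gamma p-f(q)$ from subsection~\ref{subsec:phase plane}, with $\gamma=c-\beta$, and then combine the classification there with continuous dependence on parameters and the monotonicity of $P(\gamma)$ recorded in Lemma~\ref{lem:semi-wave}.

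For part (i), the choice $c=\beta-c_0$ gives $\gamma=-c_0$, so by item (v) of subsection~\ref{subsec:phase plane} the tadpole-like solutions of \eqref{eq tw}--\eqref{tadpole tw0} are exactly the functions $V(\cdot;b,-c_0)$ with $b\in(0,P(-c_0))$, each uniquely determined by the Stefan parameter $b=-\mu q'(0)$ because the trajectory $\Gamma_7$ in Figure~1(b) is pinned by its crossing point of the $p$-axis. For part (ii), I would verify that $b=\beta-c_0$ lies in the admissible interval: Lemma~\ref{lem:semi-wave}(iii) together with the identity $\beta^*=P(-c_0)+c_0$ from its proof yields $\beta-c_0<\beta^*-c_0=P(-c_0)$, so $V^*=V(\cdot;\beta-c_0,-c_0)$ is well defined by (i) and coincides with the unique solution of \eqref{vv}. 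For part (iii), setting $c=\beta-c_0-\delta$ and $\gamma=-c_0-\delta<-c_0$ again fits item (v); since $P$ is strictly decreasing, $P(-c_0-\delta)>P(-c_0)>\beta-c_0>\beta-c_0-\delta$, placing $(\gamma,b)=(-c_0-\delta,\beta-c_0-\delta)$ inside the region $S_2$ of Figure~2(a) and giving existence and uniqueness of $V_\delta$.

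The convergence $V_\delta\to V^*$ in (iii) then follows from classical continuous dependence for ODEs. Both $V_\delta$ and $V^*$ are obtained by integrating \eqref{eq tw} backward from $z=0$ with initial data $(q,p)=(0,-b/\mu)$, where both the coefficient $\gamma=-c_0-\delta$ and the initial slope $b=\beta-c_0-\delta$ depend continuously on $\delta$. Since the linearization at $(1,0)$ is a saddle and the trajectory $\Gamma_7$ associated to $V^*$ stays in a compact set of the open right half-plane away from $(0,0)$ on any finite $z$-interval, continuous dependence delivers locally uniform convergence on $(-\infty,0]$.

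The main obstacle is the convergence statement in (i), because as $b\uparrow P(-c_0)$ the closed tadpole trajectory $\Gamma_7$ degenerates into the union of the heteroclinic orbit $\Gamma_8$ from $(0,0)$ to $(1,0)$ and its time reversal, so no naive continuous dependence in $z$ works without reparameterization. I would handle this by fixing a normalization: for $b$ close enough to $P(-c_0)$ the maximum of $V(\cdot;b,-c_0)$ exceeds $1/2$, so on the ascending branch there is a unique $z_b<0$ with $V(z_b;b,-c_0)=1/2$ and $V'(z_b;b,-c_0)>0$. Setting $\widetilde V_b(z):=V(z+z_b;b,-c_0)$, the trajectory passes through the point $(1/2,p_b)$, and because $\Gamma_7$ converges to $\Gamma_8$ in the Hausdorff sense while staying in the upper half-plane on the ascending arc, $p_b$ must converge to the unique positive $p$ on $\Gamma_8$ with $q=1/2$, namely $Q'(0)$. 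Continuous dependence on initial data applied to the common ODE \eqref{eq tw} with $c=\beta-c_0$ then forces $\widetilde V_b\to Q$ locally uniformly on $\R$, which is the claimed convergence thanks to the matching normalization $Q(0)=1/2$.
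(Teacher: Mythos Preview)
Your proposal is correct and follows essentially the same route as the paper: both arguments reduce everything to the phase plane classification of subsection~\ref{subsec:phase plane}, verify membership in the region $S_2$ via the identity $\beta^*=P(-c_0)+c_0$ and the monotonicity of $P$, and handle the convergences by continuous dependence of ODE solutions on parameters and initial data. Your normalization $V(z_b;b,-c_0)=1/2$ on the ascending branch is exactly the paper's choice $z_b:=\min\{z<0\mid V(z;b,-c_0)=\tfrac12\}$, and your reasoning that $p_b\to Q'(0)$ followed by continuous dependence is a slightly more detailed version of the paper's one-line ``by continuity''. One small inaccuracy: the limiting closed curve as $b\uparrow P(-c_0)$ is not $\Gamma_8$ together with its time reversal but rather $\Gamma_8\cup\Gamma_5$ (the descending arc lands on the negative $p$-axis at $(0,-P(-c_0)/\mu)$, not back at the origin); however this does not affect your argument, since only the ascending branch is used to establish convergence to $Q$.
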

\begin{proof}
(i) Since $c_0<\beta<\beta^*$, we have $0<\beta-c_0 < c^*$ by Lemma \ref{lem:semi-wave}.
On the $c$-$b$ plane (see Figure 3 (a)), any point $E(\beta -c_0,b)$ with $b\in (0,P(-c_0))$
corresponds to a tadpole-like solution $V(z;b,-c_0)$ of \eqref{tadpole tw0} and \eqref{eq tw}
with $c=\beta -c_0$ (cf. trajectories $\Gamma_6$ or $\Gamma_7$ in Figure 1 (b). Note that such
a solution does not necessarily satisfy Stefan condition since $b$ may be not equal to $c$).
As $b\to P(\beta -c_0)$ (i.e., point $E$ moves up to $F$ in Figure 3 (a)), the trajectory
of $V(z; b,-c_0)$ approaches the union of the trajectories of $Q$ and $U^*$ (i.e.,
$\Gamma_6 \to \Gamma_5\cup \Gamma_8$). Denote $z_b:=\min \{z<0 \mid V(z;b,-c_0)=\frac12\}$.
Then the trajectory of $V(\cdot+z_b; b,-c_0)$ approaches that of $Q(\cdot)$ (since $Q(0)=\frac12$),
and so we obtain \eqref{V to Q} by continuity.

(ii) On the $c$-$b$ plane, the line $\{b=c\}$ passes through
the domain $S_2(\beta):= \{(c,b)\mid 0<b<P(c-\beta),0<c<\beta-c_0\}$ and leaves it
at a point $B(\beta-c_0, \beta-c_0)$.
This point corresponds to the desired tadpole-like solution $V^*(z):=V(z; \beta-c_0 , -c_0)$.

(iii) For any small $\delta >0$, we consider the point $C(\beta-c_0-\delta, \beta-c_0-\delta)$
on the $c$-$b$ plane. Since $C\in S_2(\beta)\cap \{b=c\}$, it corresponds to a tadpole-like
solution $V_{\delta} (z):= V(z; \beta-c_0-\delta, -c_0-\delta)$
of \eqref{eq tw} and \eqref{tadpole tw} with $c=\beta-c_0-\delta$. In particular, $V_\delta(z)$
satisfies the following initial value problem
$$
\left\{
 \begin{array}{l}
 q''(z) -(c_0 +\delta) q'(z) +f(q)=0,\ \ z<0,\\
 q(0)=0,\ \ -\mu q'(0)=\beta - c_0 -\delta.
 \end{array}
 \right.
$$
Since $V^*$ satisfies this problem with $\delta =0$ and since $V_\delta$ depends on
$\delta$ continuously, we have $V_\delta(\cdot)\to V^*(\cdot)$ as $\delta\to 0$,
uniformly in $[-M, 0]$ for any $M>0$.
This proves the lemma.
\end{proof}

In a similar way, one can prove the following lemma (cf.  Figure 3 (b)).

\begin{lem}\label{lem:tadpole tw beta=beta*}
Assume $\beta=\beta^*$. Then for any small $\delta>0$, $q(z)=V^*_{\delta} (z):=
V(z; \beta^*-c_0-\delta,  -c_0-\delta)$ is the unique  tadpole-like
solution of \eqref{eq tw} and \eqref{tadpole tw} with $\beta =\beta^*$,
$c=\beta^*-c_0-\delta$.
Moreover, $V^*_\delta(\cdot)\to U^*(\cdot)$ locally uniformly in $(-\infty, 0]$ as $\delta \to 0$,
where $U^*(z)$ is the unique solution of \eqref{eq tw} and \eqref{traveling semi-wave}
with $\beta=\beta^*$, $c^*=c^*(\beta^*) = \beta^* -c_0$.
\end{lem}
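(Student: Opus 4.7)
I would model the proof on the structure of Lemma \ref{lem:tadpole tw bata<beta*} (iii), with the caveat that at the critical value $\beta=\beta^*$ the relevant point in the $c$-$b$ plane sits on the boundary of the admissible region and so the analysis is a shade more delicate.

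First, I would establish existence and uniqueness of $V^*_\delta$ by locating the point $(c,b)=(\beta^*-c_0-\delta,\beta^*-c_0-\delta)$ in the $c$-$b$ plane and verifying that, for all sufficiently small $\delta>0$, it lies in $S_2(\beta^*)\cap\{b=c\}$, where
\[
S_2(\beta^*):=\{(c,b)\mid 0<b<P(c-\beta^*),\ 0<c<\beta^*-c_0\}.
\]
The defining identity $\beta^*=P(-c_0)+c_0$ and the strict monotonicity $P'(\gamma)<0$ for $\gamma<c_0$ give
\[
P(-c_0-\delta)>P(-c_0)=\beta^*-c_0>\beta^*-c_0-\delta,
\]
which places the point strictly inside $S_2(\beta^*)$. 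Items (v) and the uniqueness statements in Subsection \ref{subsec:phase plane} then produce a unique tadpole-like trajectory $V^*_\delta(z)=V(z;\beta^*-c_0-\delta,-c_0-\delta)$ solving \eqref{eq tw} and \eqref{tadpole tw} with $\beta=\beta^*$ and $c=\beta^*-c_0-\delta$.

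Second, I would prove the convergence $V^*_\delta\to U^*$ locally uniformly in $(-\infty,0]$ via continuous dependence of ODE solutions on parameters. Both $V^*_\delta$ and $U^*$ are solutions of the backward initial value problem
\[
\begin{cases}
q''(z)-(c_0+\varepsilon)q'(z)+f(q)=0,\quad z<0,\\
q(0)=0,\quad -\mu q'(0)=\beta^*-c_0-\varepsilon,
\end{cases}
\]
with $\varepsilon=\delta$ and $\varepsilon=0$ respectively (recalling $c^*(\beta^*)=\beta^*-c_0$). Since the coefficients and initial data depend continuously on $\varepsilon$, standard continuous-dependence theory gives $V^*_\delta\to U^*$ uniformly on $[-M,0]$ for every $M>0$, which is exactly local uniform convergence on $(-\infty,0]$.

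The step I expect to require the most care is the tension between the two \emph{global} behaviours: $V^*_\delta(-\infty)=0$ while $U^*(-\infty)=1$, so the convergence cannot be uniform on the whole half-line. Concretely, I would note that the trajectory of $V^*_\delta$ lives in the interior of the region bounded by the heteroclinic and the homoclinic structure at $\delta=0$, and as $\delta\downarrow 0$ this trajectory tends to the union of the trajectory of $U^*$ with the stable manifold of $(0,0)$; the maximum height of $V^*_\delta$ approaches $1$ while the point at which $V^*_\delta$ begins its descent is pushed to $-\infty$. This explains both the local nature of the convergence and the uniqueness of the limit. No new estimates are needed beyond the phase-plane picture and the ODE continuous-dependence theorem used in Lemma \ref{lem:tadpole tw bata<beta*}.
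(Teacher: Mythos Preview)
Your proposal is correct and follows exactly the approach the paper intends: the paper gives no separate proof of this lemma, stating only that it is proved ``in a similar way'' to Lemma~\ref{lem:tadpole tw bata<beta*}, and your argument is precisely that adaptation, locating $(\beta^*-c_0-\delta,\beta^*-c_0-\delta)$ in $S_2(\beta^*)$ via $P(-c_0)=\beta^*-c_0$ and then invoking continuous dependence of the backward initial value problem on $\delta$ to obtain $V^*_\delta\to U^*$ locally uniformly. Your additional remarks on why the convergence is only local are accurate and helpful, though not required for the proof.
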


\medskip

{\bf (V) Compactly supported traveling wave} $W(x-ct;c,c-\beta)$
in case $\beta\in [c_0,\beta^*)$, where for any $c\in(\beta-c_0,c^*(\beta))$,
$q(z)=W(z;c, c-\beta)$ satisfies \eqref{eq tw} and
\begin{equation}\label{compact tw}
q(0)= q(-L(c,c-\beta))=0,\quad q(z)>0 \mbox{ in } (-L(c,c-\beta), 0) \quad \mbox{and} \quad -\mu q'(0) = c.
\end{equation}

\begin{lem}\label{lem:compact tw}
Assume $c_0 \leqslant  \beta < \beta^*$. For any $\delta\in(0,c^*(\beta)-\beta+c_0)$,
$q(z)=W_{\delta}(z):= W(z;\beta -c_0 +\delta, -c_0 +\delta )$ is the unique solution
of the problem \eqref{eq tw} and \eqref{compact tw} with $c=\beta -c_0 +\delta$.
Moreover, $L_\delta\to\infty$ and $D_\delta\to1$ as $\delta\to c^*(\beta)-\beta+c_0$,
where $L_\delta:=L(\beta-c_0+\delta,-c_0+\delta)$ denotes the width of the support of
$W_\delta(\cdot)$, and $D_\delta$ denotes its height.
\end{lem}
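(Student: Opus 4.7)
The plan is to derive the lemma as a direct application of the phase-plane framework of Section 3.2(iv). First I will set $c := \beta - c_0 + \delta$, $\gamma := c - \beta = -c_0 + \delta$, and $b := c$, and check that $(\gamma, b)$ lies in the admissible region
\[
S_1 = \{ (\gamma, b) : 0 < b < P(\gamma),\ -c_0 < \gamma < c_0 \}.
\]
The bound $\gamma < c_0$ amounts to $\delta < 2c_0$, which follows from Lemma \ref{lem:semi-wave}(i) (equivalently $c^*(\beta) < \beta + c_0$), yielding $\delta_* := c^*(\beta) - \beta + c_0 < 2c_0$. The strict inequality $b < P(\gamma)$ reads $c < P(c - \beta)$; since $P$ is strictly decreasing on $(-\infty, c_0)$ and $c^*(\beta)$ is the unique fixed point of $c \mapsto P(c - \beta)$ on $(0, c_0 + \beta)$ by \eqref{eq P c}, this holds precisely for $c \in (0, c^*(\beta))$, equivalently $\delta \in (0, \delta_*)$. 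Existence and uniqueness of $W_\delta$ will then follow immediately from the classification of compactly supported solutions in Section 3.2(iv).

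For the asymptotic behavior as $\delta \uparrow \delta_*$, set $\gamma_* := -c_0 + \delta_* = c^*(\beta) - \beta$ and $b_\delta := \beta - c_0 + \delta$. My idea is to compare $W_\delta$ with the strictly decreasing half-line solution $U(\cdot; \gamma_*)$ of Section 3.2(ii), whose phase-plane trajectory is the heteroclinic leaving the saddle $(1, 0)$ and ending at $(0, -P(\gamma_*)/\mu)$. I will extend $W_\delta$ past $z = -L_\delta$ as the maximal $C^2$ solution $\tilde W_\delta$ of \eqref{eq tw} with the same data $q(0) = 0$, $q'(0) = -b_\delta/\mu$, and observe that both $\gamma_\delta \to \gamma_*$ and $-b_\delta/\mu \to -P(\gamma_*)/\mu = U'(0; \gamma_*)$ as $\delta \to \delta_*$. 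Standard continuous dependence of ODE solutions on parameters and initial data, together with the boundedness of $U(\cdot; \gamma_*)$ on any compact interval, will then give that $\tilde W_\delta$ is defined on every fixed $[-T, 0]$ for $\delta$ sufficiently close to $\delta_*$, and that $\tilde W_\delta \to U(\cdot; \gamma_*)$ uniformly on $[-T, 0]$.

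Deducing the two claimed limits is then straightforward. For $L_\delta \to \infty$: if some subsequence had $L_{\delta_n} \to L^* < \infty$, then $0 = \tilde W_{\delta_n}(-L_{\delta_n}) \to U(-L^*; \gamma_*)$, contradicting $U(z; \gamma_*) > 0$ for $z < 0$. For $D_\delta \to 1$: fixing any $\eta > 0$ and choosing $T_\eta$ with $U(-T_\eta; \gamma_*) > 1 - \eta$, uniform convergence gives $W_\delta(-T_\eta) = \tilde W_\delta(-T_\eta) > 1 - \eta$ for $\delta$ close to $\delta_*$, whence $D_\delta \geq W_\delta(-T_\eta) > 1 - \eta$; combined with the trivial bound $D_\delta < 1$ (the orbit cannot cross the saddle $(1, 0)$), this yields $D_\delta \to 1$. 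The one delicate point I foresee is ensuring that $\tilde W_\delta$ remains defined on arbitrarily long intervals as $\delta \to \delta_*$; since the limiting trajectory is confined to a bounded region of the $(q, p)$-phase plane, a bootstrap via local continuous dependence on successive subintervals of fixed length will rule out loss of existence and promote the pointwise convergence to uniform convergence on $[-T, 0]$.
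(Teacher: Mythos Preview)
Your proposal is correct and follows essentially the same approach as the paper. Both arguments locate the point $(c,b)=(\beta-c_0+\delta,\beta-c_0+\delta)$ in the region $S_1$ and then let $\delta\to c^*(\beta)-\beta+c_0$ so that the compactly supported trajectory converges to the heteroclinic $U(\cdot;c^*(\beta)-\beta)=U^*(\cdot)$; the paper phrases this as $\Gamma_2\to\Gamma_1\cup\Gamma_4$ in the phase plane, while you spell out the same convergence via continuous dependence of the ODE on $(\gamma,b)$, which is a slightly more explicit but equivalent route.
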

\begin{proof}
We only prove the case $\beta\in(c_0,\beta^*)$, the proof for the case $\beta=c_0$ is
similar.

When $c_0<\beta<\beta^*$, we have $c^*(\beta)-\beta+c_0>0$ by Lemma \ref{lem:semi-wave}.
On the $c$-$b$ plane (see Figure 3 (a)), the line $b=c$ leaves the domain $S_2(\beta)$
and enters $S_1(\beta):=\{(c,b)\mid 0<b<P(c-\beta),\ \beta-c_0<c<\beta+c_0\}$ at a point
$B(\beta-c_0,\beta-c_0)$, then it passes through $S_1(\beta)$ and leaves it finally at
$A(c^*(\beta),c^*(\beta))$. For any $\delta\in(0,c^*(\beta)-\beta+c_0)$, the point
$(c,b)=(\beta-c_0+\delta,\beta-c_0+\delta)$ is on the line segment $AB$ (cf. point $D$
in Figure 3 (a)), it corresponds to a trajectory like $\Gamma_2$ on the $q$-$p$ phase
plane, and so it defines a compactly supported function
$W_\delta(z):=W(z;\beta-c_0+\delta,-c_0+\delta)$. As $\delta\to c^*(\beta)-\beta+c_0$,
the point $(\beta-c_0+\delta,\beta-c_0+\delta)$
approaches point $A$ in Figure 3 (a), this implies that its corresponding trajectory
approaches the union of the trajectories of $U^*(\cdot)$ and $U_l(\cdot;c^*(\beta)-\beta)$ (i.e.,
$\Gamma_2\to\Gamma_1\cup\Gamma_4$ in Figure 1 (a)). Therefore, the corresponding function
$W_\delta$ satisfies $\max\limits_{-L_\delta\leqslant z\leqslant0} W_\delta(z)=D_\delta\to1$,
and the width $L_\delta$ of its support tends to $\infty$ as $\delta\to c^*(\beta)-\beta+c_0$.
\end{proof}

\subsection{Zero number arguments}
In what follows, we use $\mathcal{Z}_I[w(\cdot)]$ to denote the number of zeros of
a continuous function $w(\cdot)$ defined in $I\subset\R$.
The following lemma is an easy consequence of the proofs of Theorems C and D in Angenent \cite{A}.

\begin{lem}\label{angenent}
Let $u:[0,T]\times [0, 1]\to \R$ be a bounded classical solution of
\begin{equation}
\label{linear}
u_t=a(t,x)u_{xx}+b(t,x)u_x+c(t,x)u
\end{equation}
with boundary conditions
\[
u(t,0)=l_0(t), \;u(t, 1)=l_1(t),
\]
 where $l_0, l_1\in C^1([0,T])$, and each function is either identically zero
 or never zero for $t\in [0,T]$. In the special case where $l_0(t),\ l_1(t)\equiv 0$
 we assume further that $u(t,\cdot) \not\equiv 0$ for each $t\in [0,T]$.
 Suppose also that
\[
a, 1/a, a_t, a_x, a_{xx}, b, b_t, b_x, c \in L^\infty, \mbox{ and } u(0,\cdot)\not\equiv 0 \mbox{ when $l_0=l_1\equiv 0$}.
\]
 Then for each $t\in (0, T]$, $\mathcal{Z}_{[0,1]} [u(t,\cdot)]<\infty$.
Moreover, $\mathcal{Z}_{[0,1]} [u(t,\cdot)]$ is nonincreasing in $t$ for
$t\in (0, T]$, and if for some $t_0\in(0, T]$ the function $u(t_0,\cdot)$ has
a degenerate zero $x_0\in [0,1]$, then
$\mathcal{Z}_{[0,1]} [u(t_1, \cdot)] >\mathcal{Z}_{[0,1]} [u(t_2, \cdot)] $ for all
$t_1, t_2\in (0,T]$ satisfying $t_1<t_0<t_2$.
\end{lem}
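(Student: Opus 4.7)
The plan is to reduce each permissible boundary configuration to one already covered by Angenent's Theorems C and D in \cite{A}. Four cases arise, according to whether each $l_i$ is identically zero or nowhere zero on $[0,T]$; I would channel them all into either the pure homogeneous Dirichlet case or a case in which the zero set of $u(t,\cdot)$ is pushed into the open interior, so that the nonzero boundary values do not interfere with the counting.

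The first case, $l_0\equiv l_1\equiv 0$, is essentially the content of Angenent's Theorem C. The coefficient hypotheses $a,1/a,a_t,a_x,a_{xx},b,b_t,b_x,c\in L^\infty$ are exactly his regularity assumptions, and the hypothesis $u(0,\cdot)\not\equiv 0$ rules out the trivial solution being picked up by backward uniqueness. This delivers the finiteness of $\mathcal{Z}_{[0,1]}[u(t,\cdot)]$, its monotonicity, and the strict drop at a degenerate zero directly.

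When at least one boundary value is nowhere zero, I would isolate the zero set in the interior by a uniform continuity argument. Suppose for instance $l_1(t)\neq 0$ on $[0,T]$; since $l_1$ is continuous on a compact interval, $|l_1|$ has a positive minimum $m>0$, and uniform continuity of $u$ on $[0,T]\times[0,1]$ yields $\delta_1\in(0,1)$ with $|u(t,x)-l_1(t)|<m/2$ on $[0,T]\times[1-\delta_1,1]$; hence $u(t,x)$ keeps the sign of $l_1(t)$ there and contributes no zeros near $x=1$. An analogous $\delta_0$ is produced at the left endpoint when $l_0$ is nowhere zero. Restricted to the resulting subinterval $[\delta_0,1-\delta_1]$ (or one of its one-sided variants), $u$ is a classical solution of the same linear equation, its boundary data there is either identically zero or uniformly bounded away from zero, and its zero count on $[0,1]$ coincides with its zero count on the subinterval; this is the setting treated in Angenent's Theorem D and yields the three conclusions.

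The only delicate point is the strict-drop statement at a degenerate zero $x_0\in[0,1]$. In each reduction, $x_0$ sits in the open interior of the relevant subinterval: it cannot lie on a nowhere-zero boundary by the sign argument above, while the homogeneous Dirichlet case with a boundary degeneracy is already handled inside Angenent's framework. The strict drop then follows from his local resolution of the nodal set near $(t_0,x_0)$ into finitely many smooth arcs whose count decreases as $t$ crosses $t_0$. I expect the main obstacle to be purely organizational, namely checking that the cutoff parameters $\delta_i$ are genuinely uniform in $t\in[0,T]$ and that no zero of $u(t,\cdot)$ on $[0,1]$ is lost or created by the restriction; both of these points are immediate consequences of the sign-preservation established above, so the \emph{hard work} is really packaged inside Angenent's theorems, as the paper's statement suggests.
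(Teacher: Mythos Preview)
Your proposal is correct and matches the paper's approach: the paper does not give a detailed proof but simply states that the lemma ``is an easy consequence of the proofs of Theorems C and D in Angenent \cite{A},'' which is precisely the reduction-by-cases you carry out. Your write-up supplies the organizational details the paper omits.
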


For convenience of applications in this paper we give a variant of Lemma \ref{angenent}.

\begin{lem}\label{zero-number}
 Let $\xi_1(t)<\xi_2(t)$ be two continuous functions for $t\in (t_0, t_1)$. If $u(t,x)$
 is a continuous function for $t\in (t_0, t_1)$ and $x\in J(t):= [\xi_1(t),\xi_2(t)]$,
 and satisfies \eqref{linear} in the classical sense for such $(t,x)$, with
\[
u(t,\xi_1(t))\not=0,\; u(t, \xi_2(t))\not=0 \mbox{ for } t\in (t_0, t_1),
\]
then for each $t\in (t_0, t_1)$, $\mathcal{Z}_{J(t)} [u(t,\cdot)] <\infty$.
Moreover $\mathcal{Z}_{J(t)}[u(t,\cdot)]$ is nonincreasing in $t$ for $t\in (t_0, t_1)$,
and if for some $s\in (t_0, t_1)$ the function $u(s,\cdot)$ has a degenerate zero $x_0\in
J(s)$, then $\mathcal{Z}_{J(s_1)} [u(s_1,\cdot)]>
\mathcal{Z}_{J(s_2)} [u(s_2,\cdot)]$ for all $s_1, s_2$ satisfying $t_0<s_1<s<s_2<t_1$.
\end{lem}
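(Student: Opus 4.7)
The plan is to reduce Lemma~\ref{zero-number} to Angenent's Lemma~\ref{angenent} by localizing in time and trapping every zero of $u(t,\cdot)$ inside a fixed spatial subinterval. Because $\xi_1,\xi_2$ are only continuous, one cannot straighten the moving domain by a global smooth change of variables; the nonvanishing of $u$ on the two boundary curves is instead exploited to cut off strips near them inside which no zero of $u(t,\cdot)$ can live.

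Fix $s\in(t_0,t_1)$. Continuity of $u$ and of $\xi_1,\xi_2$, together with $u(s,\xi_i(s))\ne 0$, produces $\delta>0$ and $\eta>0$ with $[s-\eta,s+\eta]\subset(t_0,t_1)$ such that $\xi_2(t)-\xi_1(t)>2\delta$ and
\[
u(t,x)\ne 0\quad\text{for all }t\in[s-\eta,s+\eta],\ x\in[\xi_1(t),\xi_1(t)+\delta]\cup[\xi_2(t)-\delta,\xi_2(t)].
\]
After shrinking $\eta$ so that each $\xi_i$ oscillates by less than $\delta/2$ on $[s-\eta,s+\eta]$, setting
\[
\alpha:=\max_{[s-\eta,s+\eta]}\xi_1+\tfrac{\delta}{4},\qquad \beta:=\min_{[s-\eta,s+\eta]}\xi_2-\tfrac{\delta}{4}
\]
gives $\xi_1(t)<\alpha<\xi_1(t)+\delta$ and $\xi_2(t)-\delta<\beta<\xi_2(t)$ throughout $[s-\eta,s+\eta]$. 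Thus the rectangle $R:=[s-\eta,s+\eta]\times[\alpha,\beta]$ sits inside the domain of $u$; the boundary traces $t\mapsto u(t,\alpha)$ and $t\mapsto u(t,\beta)$ never vanish; and every zero of $u(t,\cdot)$ in $J(t)$ lies in $(\alpha,\beta)$, so $\mathcal{Z}_{J(t)}[u(t,\cdot)]=\mathcal{Z}_{[\alpha,\beta]}[u(t,\cdot)]$ on $[s-\eta,s+\eta]$.

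The affine rescaling $y=(x-\alpha)/(\beta-\alpha)$ turns $R$ into the fixed strip $[s-\eta,s+\eta]\times[0,1]$, and $\tilde u(t,y):=u(t,\alpha+y(\beta-\alpha))$ satisfies a linear parabolic equation of the form \eqref{linear} whose coefficients inherit the regularity and boundedness required by Lemma~\ref{angenent}; both boundary traces $\tilde u(t,0),\tilde u(t,1)$ are never zero and, being $u$ evaluated on fixed interior lines, are $C^1$ in $t$ by parabolic interior regularity. Applying Lemma~\ref{angenent} to $\tilde u$ yields finiteness of $\mathcal{Z}_{[\alpha,\beta]}[u(t,\cdot)]$ at every $t\in[s-\eta,s+\eta]$, its monotonicity in $t$ there, and the strict drop across any time at which $u(t,\cdot)$ has a degenerate zero in $(\alpha,\beta)$.

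Since $s\in(t_0,t_1)$ was arbitrary, finiteness holds pointwise on $(t_0,t_1)$, and monotonicity on any compact subinterval $[a,b]\subset(t_0,t_1)$ follows by covering $[a,b]$ with finitely many such neighborhoods and chaining the local monotonicities. Finally, given a degenerate zero at $s$ and arbitrary $t_0<s_1<s<s_2<t_1$, the local conclusion yields $\mathcal{Z}_{J(t')}[u(t',\cdot)]>\mathcal{Z}_{J(t'')}[u(t'',\cdot)]$ for some $s_1<t'<s<t''<s_2$, which combines with the global monotonicities $\mathcal{Z}_{J(s_1)}[u(s_1,\cdot)]\ge \mathcal{Z}_{J(t')}[u(t',\cdot)]$ and $\mathcal{Z}_{J(t'')}[u(t'',\cdot)]\ge \mathcal{Z}_{J(s_2)}[u(s_2,\cdot)]$ to give the desired strict drop. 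The genuinely delicate step is the construction of the strip $[\alpha,\beta]$ without any smoothness of $\xi_1,\xi_2$; once it is in hand the remainder is routine.
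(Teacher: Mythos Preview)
Your proof is correct and follows essentially the same approach as the paper: localize around each time, use continuity and the nonvanishing boundary condition to trap all zeros in a fixed spatial subinterval, apply Lemma~\ref{angenent} there, and then patch together over compact time intervals. Your version is somewhat more explicit about the construction of the fixed interval $[\alpha,\beta]$, the affine rescaling, and the $C^1$ regularity of the boundary traces, but the underlying argument is the same.
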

\begin{proof}
For any given $t^*\in (t_0, t_1)$, we can find $\epsilon>0$ and $\delta>0$ small such that
$u(t,x)\not=0$ for $t\in T_{t^*}:=[t^*-\delta, t^*+\delta]\subset (t_0, t_1)$
and $x\in [\xi_1(t),\xi_1(t^*)+\epsilon]\cup [ \xi_2(t^*)-\epsilon, \xi_2(t)]$. Hence we may
apply Lemma \ref{angenent} with $[0,T]\times [0,1]$ replaced by
$T_{t^*}\times [\xi_1(t^*)+\epsilon,\xi_2(t^*)-\epsilon]$ to see that the
conclusions for $\mathcal{Z}_{J(t)} [u(t,\cdot)]$ hold for $t\in T_{t^*}$.
Since any compact subinterval of $(t_0, t_1)$ can be covered by finitely many such $T_{t^*}$,
we see that $\mathcal{Z}_{J(t)}[u(t,\cdot)]$ has the required properties over any compact
subinterval of $(t_0, t_1)$. It follows that $\mathcal{Z}_{J(t)}[u(t,\cdot)]$ has the required
properties for $t\in (t_0, t_1)$.
\end{proof}

In our approach we will compare the solution $u$ of \eqref{p} with traveling semi-wave
$U^*$ or tadpole-like traveling wave $V^*$ or compactly supported traveling wave $W_\delta$
by studying the number of their intersection points. Now we give some preliminary results.

We use $\Psi(x-ct-C)$ (for some $c>0$ and some $C\in\R$) to represent one of the
traveling waves $U^*(x-c^*t-C)$, $V^*(x-(\beta-c_0)t-C)$ and $W_\delta(x-(\beta-c_0+\delta)t-C)$.
Denote the support of $\Psi(x-ct-C)$ by $[k_1(t),k_2(t)]$, where $k_2(t)=ct+C$, $k_1(t)=-\infty$
in case $\Psi=U^*$ or $\Psi=V^*$, and $k_1(t)=k_2(t)-L_\delta$ in case $\Psi=W_\delta$. Denote
$$
r(t):=\min\{h(t),k_2(t)\},\quad R(t):=\max\{h(t),k_2(t)\},\quad l(t):=\max\{g(t),k_1(t)\}
$$
and
$$
\eta(t,x):=u(t,x)-\Psi(x-ct-C),\quad x\in J(t):=[l(t),r(t)],\ t\in(t_1,t_2)
$$
(here we only consider the case where $J(t)\neq\emptyset $ for each $t\in(t_1,t_2)$, otherwise, $u$ and
$\Psi$ has no common domain and so there is no need to compare them). We notice that $\eta$
satisfies
$$
\eta_t=\eta_{xx}-\beta\eta_x+c(t,x)\eta\quad \mbox{for }x\in(l(t),r(t)),\ t\in(t_1,t_2)
$$
with $c(t,x):=[f(u(t,x))-f(\Psi(x-ct-C))]/\eta(t,x)$ when $\eta(t,x)\neq0$, and $c(t,x)=0$
otherwise. Using Lemmas \ref{angenent} and \ref{zero-number} one can
obtain the following result on the number of zeros of $\eta(t,\cdot)$.

\begin{lem}\label{lem:zeros between u and Psi}
For any given $C\in\R$, let $r(t),\ R(t),\ l(t)$ and $\eta$ be defined as above.
Then
\begin{itemize}
  \item [(i)] $\mathcal{Z}_{J(t)}[\eta(t,\cdot)]$ is finite and nonincreasing in
  $t\in(t_1,t_2)$;
  \item [(ii)] if $t_0\in(t_1,t_2)$ such that $r(t_0)=R(t_0)$, or $\eta(t_0,\cdot)$ has
  a degenerate zero in the interior of $J(t_0)$, then
  $\mathcal{Z}_{J(\tau_1)}[\eta(\tau_1,\cdot)]>\mathcal{Z}_{J(\tau_2)}[\eta(\tau_2,\cdot)]$
  for any $t_1<\tau_1<t_0<\tau_2<t_2$.
\end{itemize}
\end{lem}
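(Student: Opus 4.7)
The plan is to reduce to Lemma \ref{zero-number} via a sign analysis of $\eta$ at the endpoints of the common domain $J(t)$, handling the ``exceptional'' times where the free boundary of $u$ meets the right edge of $\Psi$ by a continuity/approximation argument.

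First I would carry out a sign trichotomy at the right endpoint $x = r(t) = \min\{h(t),k_2(t)\}$: if $h(t)<k_2(t)$, then $r(t)=h(t)$ with $u(t,r(t))=0<\Psi(r(t)-ct-C)$, so $\eta(t,r(t))<0$; if $h(t)>k_2(t)$, then $r(t)=k_2(t)$ with $\Psi(0)=0<u(t,k_2(t))$, so $\eta(t,r(t))>0$; if $h(t)=k_2(t)$, both vanish. A parallel trichotomy holds at $x=l(t)$ when $\Psi=W_\delta$; for $\Psi\in\{U^*,V^*\}$ one has $k_1\equiv-\infty$, so $l(t)=g(t)$ and $\eta(t,l(t))=-\Psi(g(t)-ct-C)<0$ throughout. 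Denote by $E\subset(t_1,t_2)$ the closed set of exceptional times at which $h(t)=k_2(t)$ (or, in the $W_\delta$ case, $g(t)=k_1(t)$). On any open subinterval of $(t_1,t_2)\setminus E$, $\eta$ is nonzero at both endpoints of $J(t)$, so Lemma \ref{zero-number} applies directly: it yields finiteness and nonincrease of $\mathcal{Z}_{J(t)}[\eta(t,\cdot)]$, plus the strict drop in (ii) whenever the degenerate zero lies in the interior of $J(t_0)$.

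To extend nonincrease across an exceptional time $t_0\in E$, I would work on the shrunken domain $J_\epsilon(t):=[l(t)+\epsilon_l,\,r(t)-\epsilon_r]$ (with $\epsilon_l=0$ for $\Psi\in\{U^*,V^*\}$), choose $\epsilon_l,\epsilon_r>0$ small enough so that by continuity $\eta$ remains nonzero at the endpoints of $J_\epsilon(t)$ for all $t$ in a neighborhood of $t_0$, apply Lemma \ref{zero-number} on $J_\epsilon(t)$, and send $\epsilon_l,\epsilon_r\to 0$. Since for sufficiently small $\epsilon_l,\epsilon_r$ every interior zero of $\eta(t,\cdot)$ on $J(t)$ lies in $J_\epsilon(t)$, this recovers nonincrease of $\mathcal{Z}_{J(t)}[\eta(t,\cdot)]$ across $t_0$ and hence throughout $(t_1,t_2)$, proving (i).

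The strict drop in (ii) when $r(t_0)=R(t_0)$ comes from the sign-flip observed in the trichotomy. In the generic (transversal) case where $h$ and $k_2$ cross at $t_0$, the sign of $\eta(t,r(t))$ switches as $t$ traverses $t_0$, so by continuity a zero of $\eta$ must exit $J(t)$ through the moving right boundary at $t=t_0$; this lost zero, combined with the nonincrease of interior zeros already established, forces $\mathcal{Z}_{J(\tau_1)}[\eta(\tau_1,\cdot)]>\mathcal{Z}_{J(\tau_2)}[\eta(\tau_2,\cdot)]$. The main obstacle I anticipate is the non-transversal (tangential) case, in which $h(t_0)=k_2(t_0)$ but $h-k_2$ does not change sign; there the degenerate zero of $\eta$ sits at the boundary $r(t_0)$ rather than in the interior, so Lemma \ref{zero-number} does not apply. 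I would handle this by analyzing the local behavior of $\eta$ near $(t_0,r(t_0))$ using the Stefan condition $h'(t)=-\mu u_x(t,h(t))$ together with the explicit slope of $\Psi$ at its right edge (recorded in the phase-plane analysis of subsection 3.3) to show that an interior zero of $\eta$ slides out through the moving boundary as $t$ leaves $t_0$, so that $\mathcal{Z}_{J(t)}$ still drops by at least one across $t_0$.
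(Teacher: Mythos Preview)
Your approach via the sign trichotomy and the shrinking argument is sound in spirit for part (i) and for the interior-degenerate case of (ii), and is close to what the paper does away from the exceptional set $E$. The genuine gap is exactly where you flag it: the tangential case $h(t_0)=k_2(t_0)$ with $h-k_2$ not changing sign. Your proposed fix via the Stefan slope and the phase-plane data does not go through, because at such a $t_0$ both $u$ and $\Psi$ satisfy the Stefan condition at the touching point with the \emph{same} constant $\mu$, so not only $\eta(t_0,r(t_0))=0$ but also $\eta_x(t_0,r(t_0))=0$: the boundary point is a genuinely degenerate zero of $\eta(t_0,\cdot)$, sitting outside the scope of Lemma~\ref{zero-number}. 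A second, related gap: you never rule out $h(t)\equiv k_2(t)$ on a subinterval of $(t_1,t_2)$, which would give $E$ nonempty interior and undermine the local shrink-and-patch for (i).

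The paper's device resolves both issues at once. One sets $\zeta:=\eta\,e^{-\beta x/2}$ (killing the drift) and then extends $\zeta$ as an odd function across $x=r(t)$; the degenerate \emph{boundary} zero of $\eta$ becomes an \emph{interior} degenerate zero of the extended $\zeta$, to which Lemma~\ref{angenent} applies directly. This immediately forbids $h\equiv k_2$ on any time interval (so $E$ is nowhere dense), and at each isolated $t_0\in E$ forces the zero count to drop. The paper then completes (ii) by a maximum-principle argument (using~\cite{F} for the convergence of the interior zeros as $t\uparrow t_0$) showing that the rightmost interior zero is absorbed into the boundary at $t_0$ and cannot re-emerge afterward---this handles transversal and tangential contacts uniformly, without any case split.
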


\noindent
{\it Sketch of the proof}.\ The proof is essentially identical to that of Lemma 2.3 in
\cite{DLZ}. We give a sketch here for the readers' convenience.

Note that when $h(t)=k_2(t) = r(t)=R(t)$, $x=r(t)$ becomes a degenerate zero of
$\eta$ on the boundary since both $u$ and $\Psi$ satisfy Stefan condition on their right
boundaries. We claim that $h(t)\equiv k_2(t)$ in an interval $(t_3,t_4)\subset(t_1,t_2)$
is impossible. For, otherwise, we can consider $\zeta=\eta e^{-\frac{\beta}{2} x}$ instead of
$\eta$, which satisfies an equation without advection and so can be extended outside $r(t)$
as an odd function with respect to $x=r(t)$. For this extended function $x=r(t)$ is an interior degenerate zero
in time interval $(t_3,t_4)$, this contradicts Lemma \ref{angenent}.
On the other hand, $g(t)=k_1(t)$ at most once, and only possible in case $\Psi=W_\delta$. Therefore, the set of
the times when $r(t)=R(t)$ or $g(t)=k_1(t)$ is a nowhere dense set, and for other times we have
$\mathcal{Z}_{J(t)}[\eta(t,\cdot)]<\infty$ by Lemma \ref{angenent}.

Assume $r(t_0)=R(t_0)$ and $r(t)<R(t)$
for $t\in[t_0-\epsilon,t_0)$. Assume further that $\eta(t,\cdot)$ has nondegenerate
zeros $\{z_i(t)\}_{i=1}^m$ with
$$l(t)<z_1(t)<z_2(t)<\cdots<z_m(t)<r(t),\quad t\in[t_0-\epsilon,t_0).$$
Then by \cite[Theorem 2]{F} one can prove that $\lim\limits_{t\to t_0}z_i(t)$ exist
(denoted by $\bar{z}_i$) and $\{\bar{z}_i\}_{i=1}^m$ are the only zeros of $\eta(t_0,\cdot)$.
Moreover, by the maximum principle we have $\bar{z}_m=r(t_0)$, that is, the largest zero
$z_m(t)$ tends to the right boundary. Then using the maximum principle again in the domain
$\{(t,x)\mid t_0<t<t_0+\epsilon_1,r(t)-\epsilon_1<z<r(t)\}$ for some small $\epsilon_1$,
we can show that the boundary zero $r(t_0)$ disappear immediately after time $t_0$. In
summary,
$$
\mathcal{Z}_{J(\tau_1)}[\eta(\tau_1,\cdot)]=m\geqslant\mathcal{Z}_{J(t_0)}[\eta(t_0,\cdot)]>
\mathcal{Z}_{J(\tau_2)}[\eta(\tau_2,\cdot)]
$$
for $t_0-\epsilon< \tau_1 <t_0 < \tau_2 <t_0+\epsilon_1$.
{\hfill $\Box$}

\medskip

As can be expected, the presence of the advection makes the maximum points prefer
to move rightward. Indeed we can show that the local maximum points concentrate near the
right boundary under certain conditions.

Using zero number properties Lemma \ref{zero-number} to $u_x$, we see that $u_x(t,\cdot)$
has only nondegenerate zeros for all large $t$. Hence, $u(t,\cdot)$ has
fixed number of (nondegenerate) local maximum points for large $t$.

\begin{lem}\label{lem:max at right}
Assume, for some $T\geqslant 0$, $u(t,\cdot)$ has exactly $N$ ($N$ is a positive integer) local
maximum points $\{\xi_i(t)\}_{i=1}^{N}$ for all $t\geqslant  T$, with
$$g(t)<\xi_1(t)<\xi_2(t)<\cdots<\xi_N(t)<h(t).$$
If $N\geqslant 2$, then
\begin{equation}\label{xi1(t)>=eta(t)}
  \xi_1(t)\geqslant \beta\cdot(t-T)+C\quad \mbox{for }T\leqslant  t<T_\infty,
\end{equation}
for some $C\in\R$, where
\begin{equation}\label{monotonicity outside}
 T_\infty= \left\{
\begin{array}{ll}
     \inf \mathcal{T},\ \ & \mbox{ if } \mathcal{T}:=\{t\geqslant  T\mid\beta\cdot(t-T)+C=h(t)\}\neq\emptyset,\\
      \infty, \ \ & \mbox{ if } \mathcal{T}=\emptyset.
\end{array}
\right.
\end{equation}
\end{lem}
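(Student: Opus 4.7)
The plan is to argue by contradiction against the leftmost critical point $\xi_1(t)$ ever falling below the line $\ell(t) := \beta(t-T)+C$. I will fix $C\le\xi_1(T)$ so that $\xi_1(T)\ge\ell(T)=C$, and consider the maximal interval $[T,t^\star)$ on which $\xi_1(t)\ge\ell(t)$ holds; the goal is to show $t^\star=T_\infty$. Supposing for contradiction that $t^\star<T_\infty$, one has $\xi_1(t^\star)=\ell(t^\star)<h(t^\star)$ and $\xi_1(t)<\ell(t)$ for $t$ slightly larger than $t^\star$. It is useful to note that in the moving coordinate $y=x-\beta(t-T)$ the function $\tilde u(t,y):=u(t,y+\beta(t-T))$ satisfies the canonical Fisher-KPP equation $\tilde u_t=\tilde u_{yy}+f(\tilde u)$ without advection, so the claim is equivalent to showing that the leftmost maximum $\tilde\xi_1(t)=\xi_1(t)-\beta(t-T)$ is bounded below by $C$.

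\textbf{Tracking $u_x$ along the moving line.} I introduce $\phi(t):=u_x(t,\ell(t))$. Because $\xi_1(t)$ is the leftmost local maximum of $u(t,\cdot)$ and $u$ vanishes at $g(t)$, the function $u(t,\cdot)$ is strictly increasing on $(g(t),\xi_1(t))$; hence $\phi(t)>0$ for $t\in[T,t^\star)$ (where $\ell(t)<\xi_1(t)$) and $\phi(t^\star)=0$. Differentiating along $x=\ell(t)$ and using the PDE $u_t=u_{xx}-\beta u_x+f(u)$ yields
\[
\phi'(t)=u_{xxx}(t,\ell(t))+f'\bigl(u(t,\ell(t))\bigr)\,\phi(t),
\]
so from $\phi'(t^\star)\le 0$ one reads off $u_{xxx}(t^\star,\ell(t^\star))\le 0$. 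Combined with the implicit-function identity $\xi_1'(t^\star)=\beta-u_{xxx}/u_{xx}$ at a nondegenerate maximum where $u_{xx}<0$, this pointwise information is only consistent with $\xi_1'(t^\star)\le\beta$ and does not by itself produce a contradiction; a genuinely nonlocal ingredient is needed.

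\textbf{Completing the argument via a traveling-bump comparison, and main obstacle.} To close the argument I will exploit the hypothesis $N\ge 2$, which provides a second critical point $\sigma_1(t)\in(\xi_1(t),\xi_2(t))$, the local minimum adjacent to $\xi_1$ on the right. The plan is to compare $u$ on $[g(t),\ell(t)]$ with a compactly supported traveling bump $\Psi(x-\beta(t-T)-C)$ of speed $\beta$, taking $\Psi$ to be a closed-orbit solution of $\Psi''+f(\Psi)=0$ coming from the phase-plane analysis of Section~3.2 with $\gamma=0$ (closed orbits around the center $(0,0)$ give compactly supported bumps of arbitrarily small amplitude). With $\Psi$ chosen small enough that $\Psi(\cdot-C)\le u(T,\cdot)$ on its support at time $T$, a variant of the zero-number framework in Lemma~\ref{lem:zeros between u and Psi} adapted to this stationary-in-moving-frame comparison will ensure that the intersection count of $u(t,\cdot)$ with $\Psi(\cdot-\beta(t-T)-C)$ and, correspondingly, the number of sign changes of $u_x$ inside a suitable moving strip, cannot increase. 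A failure $\xi_1(t)<\ell(t)$ at $t^\star$ forces the creation of a new sign change of $u_x$ at $x=\ell(t)$ passing across the support of $\Psi$, conflicting with the standing hypothesis that $u(t,\cdot)$ has exactly $N$ maxima (hence $2N-1$ critical points) for all $t\ge T$; this is the desired contradiction. The principal obstacle throughout is that the naive pointwise derivative computation gives the wrong sign, so the argument must hinge on combining the local evolution of $\phi$ with a nonlocal comparison that uses the persistence of the second maximum $\xi_2$ to trap $\xi_1$ from the right.
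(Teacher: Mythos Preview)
Your pointwise calculation for $\phi(t)=u_x(t,\ell(t))$ is correct, and you are right that it is inconclusive by itself. The trouble is that your proposed repair via a compactly supported traveling bump does not close the gap. Placing a small bump $\Psi(\cdot-\beta(t-T)-C')$ below $u$ and keeping it there by comparison gives only a lower bound on the \emph{values} of $u$ over the bump's support; it says nothing about the location of the leftmost maximum. When $\xi_1$ moves continuously from the right of $\ell(t)$ to the left, no sign change of $u_x$ is created or destroyed: by hypothesis $u_x(t,\cdot)$ has exactly $2N-1$ nondegenerate zeros for all $t\ge T$, and the leftmost zero $\xi_1(t)$ simply drifts left. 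So the asserted ``creation of a new sign change of $u_x$ at $x=\ell(t)$'' does not occur, and the zero-number argument for $u-\Psi$ likewise does not produce a contradiction. In short, the nonlocal ingredient you reach for is the wrong one.

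The paper's argument supplies the correct nonlocal tool: a moving-plane reflection. One sets $\rho(t)=\beta(t-T)+C$ with $C<\xi_1(T)$ (in fact $C=\tfrac12[g(T)+\xi_1(T)]$) and compares $u$ with its reflection across $x=\rho(t)$, namely
\[
\zeta(t,x):=u(t,x)-u(t,\,2\rho(t)-x),\qquad x\in[l(t),\rho(t)],
\]
with $l(t)=\max\{g(t),\,2\rho(t)-\xi_N(t)\}$. Because $\rho'(t)=\beta$, the advection terms from the two copies combine exactly so that $\zeta$ satisfies the linear equation $\zeta_t=\zeta_{xx}-\beta\zeta_x+c(t,x)\zeta$. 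One checks $\zeta(T,\cdot)<0$, $\zeta(t,\rho(t))=0$, and $\zeta(t,l(t))<0$ (the last using the rightmost maximum $\xi_N$, which is where $N\ge 2$ enters). If the first contact time $T_1=\inf\{t\ge T:\rho(t)=\xi_1(t)\}$ were finite and less than $T_\infty$, the Hopf lemma at the boundary point $(T_1,\rho(T_1))$ forces $\zeta_x(T_1,\rho(T_1))>0$; but $\zeta_x(T_1,\rho(T_1))=2u_x(T_1,\xi_1(T_1))=0$, a contradiction. This reflection is precisely the mechanism that converts the local equality $u_x(T_1,\rho(T_1))=0$ into a strict inequality via Hopf, and it is what your proposal is missing.
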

\begin{proof}
Choose $C=\frac{1}{2}[g(T)+\xi_1(T)]$ and define $\rho(t):=\beta(t-T)+C$, then
$\rho(T)=C<\xi_1(T)$. Hence $T_1:=\inf\{s\geqslant T\mid\rho(s)=\xi_1(s)\}>T$.

If $T_1=T_\infty$, then \eqref{xi1(t)>=eta(t)} holds. Now we assume $T<T_1<T_\infty$.
By the definitions of $\xi_1(t)$ and $T_1$ we have
\begin{equation}\label{monotonicity outside for [T0,T1)}
  u_x(t,x)>0\quad \mbox{for }x\in[g(t),\rho(t)],\ T\leqslant  t<T_1,
\end{equation}
and
\begin{equation}\label{u_x(T1)=0}
u_x(T_1,\rho(T_1))=u_x(T_1,\xi_1(T_1))=0.
\end{equation}
Define
$$\zeta(t,x):=u(t,x)-u(t,2\rho(t)-x)\quad \mbox{for }x\in[l(t),\rho(t)],\ t\geqslant  T,$$
with $l(t):=\max\{g(t),2\rho(t)-\xi_N(t)\}$. A direct calculation shows that
$$\zeta_t=\zeta_{xx}-\beta\zeta_x+c\zeta\quad \mbox{for }x\in[l(t),\rho(t)],\ t\in[T,T_1),$$
where $c$ is a bounded function. Since
\begin{equation*}
\left\{
\begin{array}{ll}
 \zeta(T,x)<0 &\mbox{ for }x\in[l(T),\rho(T)],\\
 \zeta(t,\rho(t))=0 &\mbox{ for }t\in[T,T_1],\\
 \zeta(t,l(t))<0&\mbox{ for } t\in[T,T_1].
\end{array}
\right.
\end{equation*}
The last inequality follows from the following analysis. By the monotonicity of $u_x$
and the fact that $\xi_N(t)$ is the rightmost local maximum point, we see that, if $\zeta(t_1, l(t_1))=0$ for some
$t_1\in(T,T_1]$ (denote $t_1$ the first of such times), then $\zeta(t_1, l(t_1)+\varepsilon )>0$
for some small $\varepsilon >0$. This, however, is impossible
since $\zeta(t_1, x)<0$ for all $x\in (l(t_1), \rho(t_1))$ by the
maximum principle and by the fact that $\zeta(t,l(t))<0$ for $t\in[T,t_1)$.

Now we use the Hopf lemma for $\zeta$ in the domain
$\{(t,x)\mid l(t)\leqslant  x\leqslant  \rho(t),\ T\leqslant  t\leqslant  T_1\}$ to derive
$$\zeta(T_1,\rho(T_1))=0\quad \mbox{and}\quad \zeta_x(T_1,\rho (T_1))>0.$$
The latter, however, contradicts \eqref{u_x(T1)=0}. This proves $T_1=T_\infty$.
\end{proof}

\subsection{Upper bound estimate}
In order to study the convergence of $u$ we give some precise upper bound estimates
for the solutions. In this paper we always write
\begin{equation}\label{def A}
A:= 2 \max\big\{ 1, \|u_0\|_{L^\infty ([-h_0, h_0])}\big\}.
\end{equation}

\medskip

1. {\it Bound of $u$ near the free boundary $x=h(t)$}. For any $\delta \in (0,-f'(1))$, set
$$
\bar{g}(t):= g(t),\quad \bar{h}(t):= c^* t - M  e^{-\delta t} +H \ \mbox{ for some } M,\ H>0,
$$
and
$$
\bar{u}(t,x) := (1+Ae^{-\delta t}) U^* (x-\bar{h}(t) ) \ \mbox{ for } x\leqslant \bar{h}(t),\ t>0,
$$
where $U^*$ is the rightward traveling semi-wave (the solution of \eqref{c*}). A direct calculation as in the proof of
\cite[Lemma 3.2]{DMZ} shows that $(\bar{u},\bar{g},\bar{h})$
is an upper solution of \eqref{p} provided $M,H>0$ are large. Hence we have
\begin{equation}\label{right bound}
u(t,x) \leqslant  \bar{u}(t, x) \mbox{ for } x\in [g(t), h(t)] \mbox{ and } t>0,\quad h(t) \leqslant  \bar{h}(t)
\mbox{ for } t>0.
\end{equation}

\bigskip
2. {\it Bound of $u$ in case $\beta \geqslant  c_0$}. We define a function
$f_A(s)\in C^2([0,\infty),\R)$ such that
\begin{equation}\label{fA}
f_A (s) \left\{
   \begin{array}{ll}
    = f'(0) s, & 0\leqslant  s \leqslant  1,\\
    >0 , & 1<s<A,\\
    <0 , & s> A,
\end{array}
\right.
\quad f'_A (A)<0,\quad  f(s)\leqslant  f_A (s)\leqslant  f'(0) s\ \mbox{for}\ s\geqslant  0.
\end{equation}
Denote by $Q_A(z)$ the unique solution of \eqref{c0} with $c=c_0$, with $f$ replaced by $f_A$
and $q(+\infty)=1$ replaced by $q(+\infty)=A$. Then by the comparison principle we have
\begin{equation}\label{left bound beta large}
u(t, x) \leqslant  Q_A (x- (\beta -c_0)t + x_0)\quad \mbox{for } x\in [g(t),h(t)],\; t>0,
\end{equation}
provided $x_0 >0$ is large enough.

Using $f_A$ we consider the Cauchy problem:
$$
\left\{
\begin{array}{l}
 (u_1)_t = (u_1)_{xx} - \beta (u_1)_x + f_A ( u_1), \quad   x\in \R,\; t>0,\\
 u_1 (0,x) = \tilde{u}_0 (x)
  := \left\{
     \begin{array}{ll}
      u_0(x), &  x\in [-h_0, h_0],\\
      0,       &  |x|>h_0.
     \end{array}
     \right.
\end{array}
\right.
$$
Since $f_A \geqslant  f$, by the comparison principle we have
\begin{equation}\label{u < u1}
u(t,x)\leqslant  u_1(t,x),\quad x\in [g(t), h(t)],\; t>0.
\end{equation}

Set $y:= x- \beta t$ and $u_2 (t,y):= u_1 (t,x)= u_1 (t,y+\beta t )$.
Then $u_2 $ is a solution of
$$
\left\{
\begin{array}{l}
 (u_2)_t = (u_2)_{yy} + f_A ( u_2), \quad   y\in \R,\; t>0,\\
 u_2 (0,y) = \tilde{u}_0 (y)\in [0,A], \quad  y\in \R.
\end{array}
\right.
$$
Set $\tilde{u}(t,y) := \frac{1}{A}u_2(t,-y) $, then $\tilde{u}(t,y)$ is the solution of
$$
\left\{
\begin{array}{l}
 \tilde{u}_t  = \tilde{u}_{yy}+ \frac{1}{A} f_A \big(A \tilde{u} \big), \quad   y\in \R, \; t>0,\\
 \tilde{u}(0,y) = \frac{1}{A}\tilde{u}_0 (-y) \in [0,1], \quad y\in \R.
\end{array}
\right.
$$
By \eqref{u < u1} and by the definitions of $u_2$ and $\tilde{u}$ we have
\begin{equation}\label{u < tildeu}
u(t,x) \leqslant  A \tilde{u}(t, \beta t -x),\quad x\in [g(t),h(t)],\; t>0.
\end{equation}

On the other hand, by Proposition 2.3  in \cite{HNRR} and its proof,
there exist $C_1,C_2,t_0$ depending on $u_0$ such that
$$
\tilde{u} \Big(t,c_0 t - \frac{3}{c_0} \ln \big(1+\frac{t}{t_0}\big) +y \Big) \leqslant  C_1 Z (t,y ), \quad y\geqslant  h_0,\; t>0,
$$
where
\begin{equation}\label{def Z}
Z(t,y):= \frac{1}{\sqrt{t_0}}y e^{- \frac{c_0}{2} y} \big[ C_2e^{\frac{-y^2}{4(t+t_0)}} +h(t,y) \big],\quad y\in \R, t>0,
\end{equation}
with $h(t,y)$ satisfying
$$
\limsup\limits_{t\to \infty}  \sup\limits_{0\leqslant  y \leqslant  \sqrt{t+1}} |h(t,y)| \leqslant   \frac{C_2}{2}.
$$

In particular, there exists $C_3 >0$ such that
$$
\tilde{u} \Big(t,c_0 t - \frac{3}{c_0} \ln \big(1+\frac{t}{t_0}\big) +y \Big) \leqslant
C_3 e^{- \frac{5c_0}{12} y},\quad  y\in [0, \sqrt{t+1}].
$$
Combining with \eqref{u < tildeu} we have
\begin{equation}\label{left 1}
u (t,x) \leqslant  C_4 e^{-\frac{5c_0}{12} (Y(t)-x)}\quad \mbox{for } Y(t)-\sqrt{t+1} \leqslant  x\leqslant
\min \big\{Y(t), h(t)\big\},\; t\gg1,
\end{equation}
where $C_4>0$ is a constant and
\begin{equation}\label{def Y}
Y(t):= (\beta-c_0) t + \frac{3}{c_0} \ln \big(1+\frac{t}{t_0}\big),\quad t>0.
\end{equation}

\begin{cor}\label{cor:-h0 h(t)}
{\rm (i)} Assume $\beta =c_0$. Then there exists $C$ depending on $u_0,c_0,h_0$
such that
$$
u(t, -h_0)\leqslant  C t^{-\frac{5}{4}}\quad  \mbox{for } t>0 \mbox{ large};
$$
{\rm (ii)} Assume $\beta\in(c_0,\beta^*]$ and $h(t) =(\beta-c_0) t +O(1)$. Then there exists
$C$ depending on $u_0, c_0, h_0$ such that
$$
u (t,x ) \leqslant  C t^{-\frac{5}{4}}\quad  \mbox{for } x\in \Big[h(t)-\frac{\pi}{2}, h(t)\Big] \mbox{ and } t>0 \mbox{ large}.
$$
\end{cor}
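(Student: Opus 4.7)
The plan is to derive both bounds directly from the exponential estimate \eqref{left 1} by substituting the appropriate values of $x$, exploiting the algebraic identity $\tfrac{5c_0}{12}\cdot\tfrac{3}{c_0}=\tfrac{5}{4}$. This identity is exactly what turns the Bramson-type logarithmic correction in $Y(t)=(\beta-c_0)t+\tfrac{3}{c_0}\ln(1+t/t_0)$ into the polynomial rate $t^{-5/4}$: at any point $x$ that lags $Y(t)$ by $\tfrac{3}{c_0}\ln t +O(1)$, we get $e^{-\tfrac{5c_0}{12}(Y(t)-x)}=O(t^{-5/4})$. So the whole proof reduces to verifying that the points of interest fall within the admissibility window $Y(t)-\sqrt{t+1}\leqslant x\leqslant \min\{Y(t),h(t)\}$ of \eqref{left 1}.

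For part (i), with $\beta=c_0$, we have $Y(t)=\tfrac{3}{c_0}\ln(1+t/t_0)$, and I would simply take $x=-h_0$. To check admissibility: $Y(t)\to+\infty$ and $h(t)>0$, so $-h_0\leqslant\min\{Y(t),h(t)\}$ for large $t$; and the condition $-h_0\geqslant Y(t)-\sqrt{t+1}$ is equivalent to $h_0+\tfrac{3}{c_0}\ln(1+t/t_0)\leqslant \sqrt{t+1}$, which holds for all large $t$ since $\sqrt{t}$ dominates $\ln t$. Then \eqref{left 1} immediately yields $u(t,-h_0)\leqslant C_4 e^{-\tfrac{5c_0}{12}h_0}(1+t/t_0)^{-5/4}\leqslant C t^{-5/4}$.

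For part (ii), with $c_0<\beta\leqslant\beta^*$ and $h(t)=(\beta-c_0)t+O(1)$, the key observation is that $Y(t)-h(t)=\tfrac{3}{c_0}\ln(1+t/t_0)+O(1)\to+\infty$. Hence for any $x\in[h(t)-\tfrac{\pi}{2},h(t)]$, the quantity $Y(t)-x$ lies in the band $[\tfrac{3}{c_0}\ln(1+t/t_0)+O(1),\; \tfrac{3}{c_0}\ln(1+t/t_0)+O(1)+\tfrac{\pi}{2}]$. This band is contained in $[0,\sqrt{t+1}]$ for large $t$, so the lower bound $x\geqslant Y(t)-\sqrt{t+1}$ holds; at the same time $x\leqslant h(t)\leqslant Y(t)$ for large $t$. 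Thus \eqref{left 1} applies uniformly on the interval and gives $u(t,x)\leqslant C_4 e^{-\tfrac{5c_0}{12}(Y(t)-x)}\leqslant C(1+t/t_0)^{-5/4}\leqslant C t^{-5/4}$.

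I do not anticipate any genuine obstacle here, since the corollary is essentially a bookkeeping consequence of \eqref{left 1}; the only thing one needs to be careful about is making the $O(1)$ constants in the bounds for $Y(t)-x$ explicit enough to extract a uniform multiplicative constant for the exponential decay. The deep content sits entirely in the upstream Hamel--Nolen--Roquejoffre--Ryzhik estimate quoted to produce \eqref{left 1}, together with the recognition that the factor $5/12$ in the exponent there was chosen precisely so that $\tfrac{5c_0}{12}\cdot\tfrac{3}{c_0}=\tfrac{5}{4}$.
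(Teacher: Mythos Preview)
Your proposal is correct and follows exactly the paper's approach: the paper also derives both parts directly from \eqref{left 1} by checking that the relevant $x$ satisfies $Y(t)-x=\frac{3}{c_0}\ln(1+t/t_0)+O(1)\in[0,\sqrt{t+1}]$ for large $t$, and then reads off the $t^{-5/4}$ decay from the exponential. The paper's writeup is slightly terser (it proves (ii) and says (i) is similar), but the argument is identical to yours.
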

\begin{proof}
We only prove (ii) since (i) can be proved similarly. In Case (ii),
$$
Y(t)=(\beta-c_0) t  + \frac{3}{c_0} \ln \big(1+\frac{t}{t_0}\big),
$$
and so, for any $x\in [h(t)-\frac{\pi}{2}, h(t) ]$, we have
$$
Y(t) -x = \frac{3}{c_0} \ln \big(1+\frac{t}{t_0}\big) + O(1)  \in [0, \sqrt{t+1}],\quad
\mbox{when } t\gg 1.
$$
Using \eqref{left 1} we have
\begin{equation}\label{u h0<}
u (t, x)\leqslant  C t^{-\frac{5}{4}}\quad \mbox{when } t\gg 1,
\end{equation}
where $C>0$ are some constants depending on $u_0,c_0,h_0$.
\end{proof}

\begin{remark}
\rm For any given $m\in (0,1)$, denote
$$
\chi(t):= \min\{x\in [g(t),h(t)] \mid u(t,x)=m\} \mbox{ and }
\widetilde{\chi}(t):= \min\{y\in \R \mid \tilde{u}(t,y)=m/A\}.
$$
Then by \cite[Theorem 1.1]{HNRR} we have
$$
c_0 t -\frac{3}{c_0} \ln t -C \leqslant \widetilde{\chi}(t)
\leqslant c_0 t -\frac{3}{c_0} \ln t +C ,\quad t\gg1,
$$
for some $C>0$. Hence by \eqref{u < tildeu} we have
\begin{equation}\label{chi(t)>=}
\chi(t)\geqslant \beta t -\widetilde{\chi}(t) \geqslant (\beta -c_0)t +\frac{3}{c_0} \ln t -C,\quad t\gg1.
\end{equation}
\end{remark}

\section{Influence of $\beta$ on the long time behavior of solutions}
In this section we consider the influence of $\beta$ on the long time behavior of the solutions. In subsection 1 we give a locally uniformly convergence result. In
subsection 2 we consider the small advection $\beta \in (0,c_0)$ and prove Theorem
\ref{thm:small beta}. In subsection 3 we first prove the boundedness of $g_\infty$
for $\beta \geqslant  c_0$, the boundedness of $h_\infty$ for $\beta \geqslant  \beta^*$,
and then prove Theorem \ref{thm:large beta} for large advection $\beta \geqslant  \beta^*$.
In subsection 4, we consider \eqref{p} with medium-sized advection $\beta\in [c_0, \beta^*)$
and prove Theorems \ref{thm:middle beta} and \ref{thm:beta=c0}. The argument for the case $\beta\in [c_0, \beta^*)$
are longer and much more complicated than the cases with small or large advection.

\subsection{Convergence result}
First we give a locally uniformly convergence result for $\beta \geqslant  c_0$.

\begin{lem}\label{lem:beta>c0 u to 0}
Assume $\beta\geqslant  c_0$. Then $u(t,\cdot)$ converges as $t\to\infty$ to $0$ locally
uniformly in $I_\infty$.
\end{lem}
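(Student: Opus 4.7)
The plan is to invoke the upper bound estimates assembled in Section 3.5, treating $\beta > c_0$ and $\beta = c_0$ separately. Fix a compact subset $K \subset I_\infty = (g_\infty, h_\infty)$; by the monotonicity of $g(t)$ and $h(t)$, there exists $T_0 = T_0(K)$ such that $K \subset (g(t),h(t))$ for all $t \geq T_0$, so any pointwise upper bound for $u(t,x)$ on $[g(t),h(t)]$ may be applied uniformly to every $x \in K$ once $t$ is large.

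For the case $\beta > c_0$, I would use \eqref{left bound beta large}:
$$u(t,x) \leq Q_A\bigl(x - (\beta - c_0)t + x_0\bigr), \qquad x \in [g(t),h(t)],\ t > 0,$$
where $Q_A(-\infty) = 0$ and $Q_A$ is continuous and monotone. Since $\beta - c_0 > 0$ and $K$ is bounded, $x - (\beta - c_0)t + x_0 \to -\infty$ uniformly for $x \in K$, so $u(t,x) \to 0$ uniformly on $K$.

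For the borderline case $\beta = c_0$, the previous bound degenerates because the argument of $Q_A$ no longer drifts to $-\infty$; instead I would appeal to the sharper estimate \eqref{left 1},
$$u(t,x) \leq C_4 \exp\!\bigl(-\tfrac{5c_0}{12}(Y(t)-x)\bigr),$$
valid for $Y(t) - \sqrt{t+1} \leq x \leq \min\{Y(t), h(t)\}$, where, at $\beta = c_0$, $Y(t) = \tfrac{3}{c_0}\ln(1 + t/t_0) \to +\infty$. For $t$ large enough, all three constraints are met uniformly for $x \in K$: the left endpoint $Y(t) - \sqrt{t+1} \to -\infty$ since $\sqrt{t}$ dominates $\ln t$; $Y(t) \to +\infty$ ensures $x \leq Y(t)$; and $x \leq h(t)$ by the choice of $T_0$. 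Since $Y(t) - x \to +\infty$ uniformly for $x \in K$, the exponential bound forces $u(t,x) \to 0$ uniformly on $K$.

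Conceptually the statement is a fairly direct corollary of the preparatory upper bounds, and the only real subtlety is the switch between \eqref{left bound beta large} and \eqref{left 1} at the critical value $\beta = c_0$: the main obstacle is recognizing that the logarithmic correction hidden in $Y(t)$ is exactly what is needed to recover convergence when the leading-order drift $\beta - c_0$ vanishes. Everything else reduces to bookkeeping on the admissibility ranges.
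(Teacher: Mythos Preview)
Your proof is correct and follows essentially the same approach as the paper: split into $\beta>c_0$ and $\beta=c_0$, apply \eqref{left bound beta large} in the former case, and apply \eqref{left 1} with $Y(t)=\tfrac{3}{c_0}\ln(1+t/t_0)$ in the latter. Your verification of the admissibility window for \eqref{left 1} is in fact slightly more explicit than the paper's.
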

\begin{proof}
When $\beta >c_0$, the conclusion follows easily from \eqref{left bound beta large} since the
upper solution $Q_A(x-(\beta -c_0)t +x_0)$ is a rightward traveling wave with positive speed
$\beta-c_0$ and $Q_A(z)\to 0$ as $z\to -\infty$.

Assume $\beta =c_0$. Then for any $[a,b]\subset I_\infty$, when $t$ is sufficiently large we
have
$$
\frac{3}{c_0} \ln \big( 1+\frac{t}{t_0} \big) -\sqrt{t+1} < x< \frac{3}{c_0} \ln \big( 1+\frac{t}{t_0} \big) \quad \mbox{ for any } x\in [a,b],
$$
and so by \eqref{left 1}, for any $x\in [a,b]$,
$$
u(t,x) \leqslant  C_4 e^{-\frac{5c_0}{12}  \big[ \frac{3}{c_0} \ln \big( 1+\frac{t}{t_0} \big) -b\big] } \to 0,\quad \mbox{as } t\to \infty.
$$
This proved the lemma.
\end{proof}

\begin{thm}\label{thm:convergence}
Let $(u,g, h)$ be a time-global solution of \eqref{p}. Then as $t\to\infty$, $u(t,\cdot)$ converges to $0$
or to $1$ locally uniformly in $I_\infty$ when $\beta\in(0,c_0)$; $u(t,\cdot)$ converges
to $0$ locally uniformly in $I_\infty$ when $\beta\geqslant c_0$.

Moreover, $\lim_{t\to\infty}\|u(t,\cdot)\|_{L^{\infty}([g(t),h(t)])}=0$
if $I_{\infty}$ is a bounded interval.
\end{thm}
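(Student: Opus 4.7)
The case $\beta \geqslant c_0$ is immediate from Lemma \ref{lem:beta>c0 u to 0}, so the remaining work concerns $\beta \in (0, c_0)$ and the final uniform vanishing claim. The plan is a standard $\omega$-limit analysis combined with the phase-plane classification of Subsection 3.2. Since the maximum principle and the Fisher--KPP structure give $\|u(t,\cdot)\|_\infty \leqslant \max\{1, \|u_0\|_\infty\}$, parabolic Schauder estimates yield uniform interior $C^{1+\nu/2,\,2+\nu}$ bounds on $u$ together with $C^{1+\nu/2}$ bounds up to the (straightened) free boundary. Consequently, for any $t_n \to \infty$ a subsequence of the shifted solutions $u(t_n+s,x)$ converges in $C^{1,2}_{\mathrm{loc}}(\R \times I_\infty)$ to an entire solution $V(s,x)$ of $V_s = V_{xx} - \beta V_x + f(V)$ with $0 \leqslant V \leqslant \max\{1, \|u_0\|_\infty\}$.

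Consider first the case when $I_\infty$ is bounded. Integrating the Stefan conditions gives $\int_0^\infty |u_x(t, h(t))|\,dt = (h_\infty - h_0)/\mu < \infty$ and likewise for $g$. Coupled with uniform H\"{o}lder continuity of $u_x$ at the free boundary, this forces $u_x(t,h(t)) \to 0$ and $u_x(t,g(t)) \to 0$. Passing to the limit, $V(s,\cdot)$ satisfies $V = 0$ and $V_x = 0$ at both endpoints of $I_\infty$ for every $s$. If $V \not\equiv 0$, the strong maximum principle would give $V > 0$ in the interior and then the Hopf boundary lemma would force $V_x \neq 0$ at an endpoint, a contradiction. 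Hence every subsequential limit is identically zero, which upgrades to $\|u(t,\cdot)\|_{L^\infty([g(t),h(t)])} \to 0$, yielding the uniform vanishing.

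When $I_\infty$ is unbounded, one first shows $V(s,x)$ is independent of $s$. This is the usual Matano-type fact for one-dimensional semilinear parabolic equations, obtainable by applying the zero-number tools of Subsection 3.4 to $V(s_1,\cdot) - V(s_2,\cdot)$ (which cannot acquire new intersections and in the limit must be identically zero). The time-independent $V$ is thus a bounded nonnegative classical solution of $V'' - \beta V' + f(V) = 0$ on $I_\infty$. Subsection 3.2 lists all such solutions for $\beta < c_0$: the constants $0, 1$; the half-line profiles $U(\cdot; -\beta)$, $U_l(\cdot; -\beta)$; and the compactly supported $W(\cdot; b, -\beta)$. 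None of the latter three admits a $C^2$ extension to all of $\R$, since the one-sided derivative at the boundary of the support equals $-b/\mu \neq 0$; so if $I_\infty = \R$ then $V \equiv 0$ or $V \equiv 1$, and if $I_\infty$ is a half-line, the boundary analysis of the previous paragraph forces $V(g_\infty) = V_x(g_\infty) = 0$ (or likewise at $h_\infty$) and ODE uniqueness again gives $V \equiv 0$. Connectedness of the $\omega$-limit set and the isolation of $0$ and $1$ in the locally uniform topology then promote subsequential convergence to the stated convergence. \emph{The main technical point} is the stationarity step in the unbounded case, because the natural weighted energy $\int_{g(t)}^{h(t)} e^{-\beta x}[\tfrac12 u_x^2 - F(u)]\,dx$ is not bounded below when $g(t) \to -\infty$; this is what necessitates the zero-number argument in place of a Lyapunov functional.
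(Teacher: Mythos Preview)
Your proof is correct and follows essentially the same route as the paper's: the paper's argument is a brief sketch that cites \cite{DM, DuLou, LL1} for the $\omega$-limit/stationarity step and the Hopf-lemma boundary analysis, and you have unpacked precisely those ingredients (compactness for subsequential limits, zero-number for stationarity in the unbounded case, phase-plane classification from Subsection~3.2, and the Stefan-integral/Hopf argument at finite endpoints). Your remark that the weighted energy fails and must be replaced by a zero-number argument is exactly why the paper defers to the Du--Matano machinery rather than a Lyapunov functional.
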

\begin{proof}
Using a similar argument as proving \cite[Theorem 1.1]{DM}, \cite[Theorem 1.1]{DuLou},
\cite[Theorem 1.1]{LL1}, one can show that $u(t,\cdot)$ converges, as $t\to\infty$, to a
stationary solution, that is, a solution $v$ of $v_{xx}-\beta v_x+f(v)=0$, locally uniformly
in $x\in I_\infty$. Moreover, one can show by Hopf lemma that $v=0$ when $g_\infty>-\infty$
or $h_\infty <\infty$. In other word, the limit $v$ can not be a non-trivial solution with
endpoint. Therefore, when $\beta\in (0,c_0)$, the only possible choice for the
$\omega$-limit of $u$ in the topology of $L^\infty_{\rm loc} (I_\infty)$ is $0$ or $1$;
when $\beta\geqslant  c_0$, the conclusion follows from Lemma \ref{lem:beta>c0 u to 0}.
Finally, when $I_\infty$ is bounded, the uniform convergence for $u$ is also
proved in the same way as that in \cite{DuLou,LL1}.
\end{proof}

\subsection{Problem with small advection: $0<\beta<c_0$}
In a similar way as proving \cite[Lemma 2.2]{GLL1} and \cite[Theorem 3.2, Corollary 4.5]{DuLou}
one can show the following conditions for spreading and for vanishing.

\begin{lem}\label{lem:<c0}
Assume $\beta\in(0,c_0)$. Let $(u,g,h)$ be a solution of \eqref{p}.

{\rm (i)} If $h_0<H^*:= \frac{\pi}{\sqrt{{c^2_0}-\beta^2}}$ and if $\|u_0\|_{L^\infty([-h_0,h_0])}$
is sufficiently small, then vanishing happens;

{\rm (ii)} if $h_0\geqslant  H^*$, then spreading happens.
\end{lem}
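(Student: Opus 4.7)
The plan is to reduce both statements to an analysis of the principal Dirichlet eigenvalue of the linearized operator $Lu := u_{xx}-\beta u_x + f'(0)u$ on a symmetric interval $(-L,L)$. Conjugating by $u=e^{\beta x/2}\psi$ converts this to $\psi''+(f'(0)-\beta^2/4)\psi=\lambda\psi$ with Dirichlet conditions, so the first eigenvalue is
\[
\mu_1(L)=\frac{c_0^2-\beta^2}{4}-\frac{\pi^2}{(2L)^2},
\]
which vanishes exactly when $L=H^*$, is negative for $L<H^*$, and positive for $L>H^*$. Hence the threshold $H^*$ appearing in the statement is precisely the length at which the zero solution switches from being linearly stable to linearly unstable under Dirichlet boundary data.

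For part (i), I would fix $L^*\in(h_0,H^*)$ so that $\mu_1(L^*)<0$, denoting by $\phi_*>0$ the associated Dirichlet principal eigenfunction on $(-L^*,L^*)$. Pick $\delta\in(0,-\mu_1(L^*))$, a small $\delta_1>0$, and set
\[
\bar h(t)=L^*-\delta_1 e^{-\delta t},\quad \bar g(t)=-\bar h(t),\quad
\bar u(t,x)=\ve e^{-\delta t}\phi_*\bigl(xL^*/\bar h(t)\bigr).
\]
Using $f(u)\leqslant f'(0)u$ together with $\mu_1(L^*)<-\delta$, a direct computation shows $\bar u_t\geqslant \bar u_{xx}-\beta\bar u_x+f(\bar u)$ provided $\ve,\delta_1$ are small. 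The Stefan inequality $\bar h'(t)\geqslant -\mu\bar u_x(t,\bar h(t))$ reduces to $\delta\delta_1 e^{-\delta t}\geqslant \mu\ve e^{-\delta t}|\phi_*'(L^*)|\cdot L^*/\bar h(t)$, which holds when $\ve\ll\delta\delta_1$. Finally, if $\|u_0\|_{L^\infty}$ is sufficiently small we arrange $u_0\leqslant \bar u(0,\cdot)$, so Lemma~\ref{lem:comp1} yields $h(t)\leqslant \bar h(t)<L^*<H^*$ and $u(t,\cdot)\leqslant \bar u(t,\cdot)\to 0$. Theorem~\ref{thm:convergence} (the uniform-convergence clause when $I_\infty$ is bounded) then gives vanishing.

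For part (ii), the key observation is that $g'(t)<0$ and $h'(t)>0$ for all $t>0$, so for any $t_0>0$ there is an $\ve>0$ with $[-h_0-\ve,h_0+\ve]\subset(g(t_0),h(t_0))$ and $L:=h_0+\ve>H^*$, whence $\mu_1(L)>0$. Let $\phi_L>0$ be the corresponding Dirichlet principal eigenfunction on $(-L,L)$. Since $f(s)=f'(0)s+O(s^2)$ near $0$, for sufficiently small $k>0$ the function $k\phi_L$ is a stationary subsolution of \eqref{p}$_1$ on $(-L,L)$ with Dirichlet data, while $1$ is a supersolution. Because $u(t_0,\cdot)$ is continuous and positive on $[-L,L]$, after possibly shrinking $k$ we have $u(t_0,x)\geqslant k\phi_L(x)$ there. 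Letting $\tilde u$ solve the PDE on the fixed interval $(-L,L)$ with Dirichlet data and initial datum $k\phi_L$, the comparison principle (applied twice, via the left and right symmetric versions of Lemma~\ref{lem:comp2}) gives $u(t,x)\geqslant \tilde u(t-t_0,x)$ on $[-L,L]$ for $t\geqslant t_0$, and monotone iteration yields $\tilde u(t,\cdot)\uparrow V$, a positive stationary solution on $(-L,L)$. In particular $u$ does not vanish, and Theorem~\ref{thm:convergence} forces $u\to 1$ locally uniformly in $\R$, i.e., spreading.

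The main obstacle is the construction in part (i): reconciling the parabolic inequality for $\bar u$ with the Stefan inequality for $\bar h$ while keeping $\bar h(t)<H^*$ requires a careful coupled choice of the three parameters $\ve,\delta,\delta_1$, since $\bar h'(t)$ and $|\bar u_x(t,\bar h(t))|$ decay at the same exponential rate $e^{-\delta t}$ and must be compared with sharp constants depending on $\phi_*'(\pm L^*)$. Everything else (the eigenvalue computation, the sub/super construction from $\phi_L$, and the invocation of Theorem~\ref{thm:convergence}) is standard and follows the templates of \cite[Theorem 3.2, Corollary 4.5]{DuLou} and \cite[Lemma 2.2]{GLL1}.
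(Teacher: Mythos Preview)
Your overall strategy---reduce to the principal Dirichlet eigenvalue, build a decaying supersolution for (i), and a positive stationary subsolution for (ii)---is exactly what the paper invokes by citing \cite{GLL1,DuLou} (it gives no proof of its own), and part (ii) is correct as written.

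There is, however, a genuine gap in the supersolution for part (i). Rescaling the \emph{full} eigenfunction via $\bar u(t,x)=\ve e^{-\delta t}\phi_*\bigl(xL^*/\bar h(t)\bigr)$ implicitly rescales the drift coefficient in the eigenvalue equation. A direct computation gives
\[
\bar u_t-\bar u_{xx}+\beta\bar u_x-f'(0)\bar u
=\ve e^{-\delta t}\Bigl[\bigl(-\delta-\alpha^2\mu_1+(\alpha^2-1)f'(0)\bigr)\phi_*
-\bigl(\alpha x\,\bar h'/\bar h+\alpha(\alpha-1)\beta\bigr)\phi_*'\Bigr],
\]
with $\alpha=L^*/\bar h(t)>1$. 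Near the left endpoint $x=-\bar h(t)$ one has $\phi_*\to 0^+$, $\phi_*'>0$, and the coefficient of $\phi_*'$ equals $(L^*/\bar h)\,\delta_1 e^{-\delta t}\bigl(\beta/\bar h-\delta\bigr)$, which is \emph{positive} whenever $\delta<\beta/\bar h$. But you are forced to take $\delta<-\mu_1(L^*)$, and $-\mu_1(L^*)\to 0$ as $L^*\to H^*$; hence for any $h_0$ close to $H^*$ (so that $L^*$ must also be close to $H^*$) the inequality $\delta<\beta/\bar h$ is unavoidable and the supersolution inequality fails in a neighbourhood of the left boundary, no matter how small $\ve,\delta_1$ are.

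The fix, used in the paper's own Lemma~\ref{lem:condition for vanishing} for the range $\beta\geqslant c_0$ and working verbatim here, is to rescale only the trigonometric factor and keep the drift-compensating exponential unrescaled:
\[
\bar u(t,x)=\ve\, e^{-\delta t}\,e^{\frac{\beta}{2}(x-\bar h(t))}\cos\frac{\pi x}{2\bar h(t)}.
\]
Then the time-derivative of the cosine contributes $+\ve e^{-\delta t}e^{\beta(x-\bar h)/2}\,\dfrac{\pi x\,\bar h'}{2\bar h^2}\sin\dfrac{\pi x}{2\bar h}$, which is nonnegative on $[-\bar h,\bar h]$ because $x$ and $\sin(\pi x/(2\bar h))$ share the same sign. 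The remaining terms are multiples of $\bar u$ and yield the clean lower bound $\bigl[\pi^2/(4\bar h^2)-(c_0^2-\beta^2)/4-\delta-\beta\bar h'/2\bigr]\bar u\geqslant 0$ for small $\delta$, exactly as you intended.
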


This lemma implies that, when $\beta\in (0,c_0)$, $2H^*$ is a critical width of the
interval $I(t)$. Spreading happens if and only if $|I_\infty| >2H^*$. This extends
the results in \cite{DuLin, DuLou} for $\beta =0$, where it was shown that the
critical width $2H^*= \frac{2\pi}{c_0}=\frac{\pi}{\sqrt{f'(0)}}$.

\medskip

\noindent
{\bf Proof of Theorem \ref{thm:small beta}:}\ By Lemma \ref{lem:<c0} (ii) we see that
spreading happens if $|I_\infty|>2H^*$. By the definition of spreading, this implies
that $I_\infty=\R$. Hence both the case $g_\infty>-\infty$, $h_\infty=\infty$ and the
case $g_\infty=-\infty$, $h_\infty<\infty$ are impossible. $I_\infty$ is either a
bounded interval with width $|I_\infty|\leqslant2H^*$ or the whole line $\R$. Using
Theorem \ref{thm:convergence} and Lemma \ref{lem:<c0} again, we can get the
spreading-vanishing dichotomy result for the long-time behavior of the solutions of
\eqref{p}. Then the sharp threshold of the initial data $\sigma\phi$ can be proved in a
similar way as in \cite[Theorem 5.2]{DuLou}.
{\hfill $\Box$}

\subsection{Boundedness of $g_\infty$ and $h_\infty$, the proof of Theorem \ref{thm:large beta}}

Whether $g_\infty$ and $h_\infty$ are bounded or not is also a part of the conclusions
in the long time behavior of $(u,g,h)$. In this subsection we show that $g_\infty>-\infty$
if $\beta \geqslant  c_0$, and $h_\infty<\infty$ if $\beta\geqslant\beta^*$.

We will prove these conclusions by using Corollary \ref{cor:-h0 h(t)}. For this purpose
we need the monotonicity of $u_x$. When $\beta =0$, Du and Lou \cite{DuLou} proved the
monotonicity of $u_x$ in $[h_0, h(t)]$ and in $[g(t),-h_0]$. When $\beta>0$
we find that this is true only on the left side: $x\in[g(t),-h_0]$.

\begin{lem}\label{lem:center}
Assume $(u,g, h)$ is a solution of \eqref{p}. Then
\begin{equation}\label{g+h}
g(t)+h(t)>-2h_0 \quad\mbox{for all } t>0,
\end{equation}
\begin{equation}
\label{rough-symmetry}
 u_x(t,x)>0\quad \mbox{for all }x\in [g(t), -h_0],\ t>0.
\end{equation}
\end{lem}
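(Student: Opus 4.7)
The plan is to compare $u$ with its reflection $v(t,x) := u(t, -2h_0 - x)$, defined on the reflected interval $[-2h_0-h(t), -2h_0-g(t)]$, across the line $x = -h_0$. A chain-rule calculation shows $v_t = v_{xx} + \beta v_x + f(v)$ together with zero Dirichlet and Stefan conditions at its moving endpoints, so $v$ solves the same free-boundary problem as $u$ but with the sign of the advection reversed. Setting $w := u - v$ on the overlap of the two intervals, one computes
\[
w_t = w_{xx} - \beta w_x + 2\beta\, u_x(t, -2h_0-x) + c(t,x)\, w,
\]
where $c(t,x) = [f(u)-f(v)]/(u-v)$ is bounded; note in particular that $w(t,-h_0) \equiv 0$.

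I plan to prove \eqref{g+h} and \eqref{rough-symmetry} simultaneously by a time-bootstrap. Let $T$ be the supremum of $t_0 > 0$ such that $g(s)+h(s) > -2h_0$ and $u_x(s,y) > 0$ for every $y \in [g(s), -h_0]$ and $s \in (0, t_0]$. Both conditions hold at $t = 0$ (the second vacuously, since $[g(0), -h_0]$ collapses to a point), and $T > 0$ follows by a short-time application of the strong maximum principle to $u_x$ (which itself solves a linear parabolic equation), using the boundary sign $u_0'(-h_0) \geq 0$. Assume for contradiction $T < \infty$.

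For $t \in (0, T)$, the overlap equals $[g(t), -2h_0-g(t)]$. On its right half $[-h_0, -2h_0-g(t)]$, the reflection point $-2h_0-x$ lies in $[g(t), -h_0]$, where $u_x > 0$ by the bootstrap hypothesis, so the source $2\beta\, u_x(t, -2h_0-x) > 0$. With $w(t, -h_0) = 0$ and $w(t, -2h_0-g(t)) = u(t, -2h_0-g(t)) > 0$, the substitution $\bar w := e^{-\lambda t} w$ (for $\lambda > \sup c$, to make the zero-order coefficient favorable) followed by the strong maximum principle yields $w > 0$ on the open right half. Hopf's lemma at $x = -h_0$ then gives $w_x(t, -h_0) > 0$; since $w_x(t, -h_0) = 2 u_x(t, -h_0)$, we obtain $u_x(t, -h_0) > 0$. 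Repeating the argument with $-h_0$ replaced by any $x_0 \in (g(t), -h_0)$ (the right-half reflection now maps $[x_0, 2x_0-g(t)]$ into $[g(t), x_0] \subset [g(t), -h_0]$) yields $u_x(t, x_0) > 0$ throughout $(g(t), -h_0]$, so \eqref{rough-symmetry} holds on $(0, T]$. By continuity these strict inequalities persist beyond $T$, so the only possible failure at $T$ is $g(T)+h(T) = -2h_0$, equivalently $h(T) = -2h_0-g(T)$.

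At $t = T$, the overlap equals the full interval $[g(T), h(T)]$, with $w(T, g(T)) = w(T, h(T)) = 0$ and $w(T, \cdot) > 0$ on $(-h_0, h(T))$ by passing to the limit. Hopf's lemma applied at $x = h(T)$ gives $w_x(T, h(T)) < 0$, which (using $v_x(T, h(T)) = -u_x(T, g(T))$) rewrites as $u_x(T, h(T)) + u_x(T, g(T)) < 0$. Therefore
\[
(g+h)'(T) = -\mu\bigl[u_x(T, g(T)) + u_x(T, h(T))\bigr] > 0,
\]
contradicting the fact that $g+h$ reaches $-2h_0$ from above at $T$. Hence $T = \infty$ and both conclusions hold. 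The main obstacle is the circularity of the bootstrap: the sign of the source $2\beta\, u_x(t,-2h_0-x)$ in the $w$-equation on the right half is controlled precisely by the monotonicity \eqref{rough-symmetry} we are trying to prove; the time-continuity framework closes this loop, but the base case $T > 0$ in the degenerate situation $u_0'(-h_0) = 0$ requires additional care via the strong maximum principle for $u_x$.
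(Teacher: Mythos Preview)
Your approach is correct and is essentially the same moving-plane/reflection argument the paper uses: compare $u$ with its reflection about a vertical line, exploit the sign of the extra advection term that appears in the $w$-equation, and close with Hopf's lemma. The only cosmetic differences are that you work on the right half of the overlap (the paper works on the left half, which gives the mirror-image inequality $w<0$) and that you run a single combined bootstrap threshold $T$ rather than the paper's two separate thresholds $T_1$ (for $g+h$) and $T_2$ (for the monotonicity); your observation about the degenerate base case $u_0'(-h_0)=0$ is a point the paper passes over with ``by continuity''.
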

\begin{proof}
It is easily seen by the continuity that, when $t>0$ is sufficiently small,
$g(t)+h(t)>-2h_0$ and $u_x(t,x)>0$ for $g(t)\leqslant  x\leqslant  -h_0$. Define
$$
T_1:=\sup\big\{s\mid g(t)+h(t)>-2h_0\ \mbox{for all}\ t\in(0,s)\big\},
$$
$$
T_2:=\sup\big\{s\mid u_x(t,x)>0\ \mbox{for any}\ x\in[g(t),-h_0],\ t\in(0,s)\big\}.
$$
We prove that $T_1=T_2=+\infty$. Otherwise, either $T_1<T_2\leqslant  +\infty$, or
$T_2\leqslant  T_1\leqslant  +\infty$ and $T_2<+\infty$.

1. If $T_1<T_2\leqslant +\infty$, then
$$
g(t)+h(t)>-2h_0 \mbox{ for } t\in[0,T_1)\quad \mbox{and}\quad  g(T_1)+h(T_1)=-2h_0.
$$
Hence
\begin{equation}\label{T1}
   g'(T_1)+h'(T_1) \leqslant   0.
\end{equation}
Set $G_{T_1}:=\{(t,x)|t\in(0,T_1],x\in(g(t),-h_0)\}$ and
$$
w(t,x):=u(t,x)-u(t,-2h_0-x) \mbox{ in } \overline{G}_{T_1}.
$$
Since $-h_0\leqslant -2h_0-x\leqslant -2h_0-g(t)\leqslant  h(t)$ when $(t,x)\in\overline{G}_{T_1}$,
$w$ is well-defined over $\overline{G}_{T_1}$ and it satisfies
\begin{eqnarray*}
  w_t-w_{xx}-\beta w_x -c(t,x)w=-2\beta u_x(t,x) \leqslant 0 \mbox{ for } (t,x)\in G_{T_1},
\end{eqnarray*}
where $c$ is a bounded function, and
$$w(t,-h_0)=0,\ w(t,g(t))\leqslant0\ \mbox{for}\ t\in(0,T_1].$$
Moreover,
$$
w(T_1,g(T_1))=u(T_1,g(T_1))-u(T_1,-2h_0-g(T_1))=u(T_1,g(T_1))-u(T_1,h(T_1))=0.
$$
Then by the strong maximum principle and the Hopf lemma, we have
$$w(t,x)<0\ \mbox{for}\ (t,x)\in G_{T_1},\ \mbox{and}\ w_x(T_1,g(T_1))<0.$$
Thus
$$
g'(T_1)+h'(T_1)=-\mu[u_x(T_1,g(T_1))+u_x(T_1,h(T_1))]=-\mu w_x(T_1,g(T_1))>0.
$$
This contradicts \eqref{T1}.

2. If $T_2\leqslant  T_1\leqslant \infty$ and $T_2<+\infty$, then
$$
u_x(t,x)>0,\ t\in(0,T_2),\ x\in[g(t),-h_0].
$$
By the definition of $T_2$, there exists $y\in (g(T_2),-h_0]$ such that $u_x(T_2,y)= 0$.
Denote $x_0$ the minimum of such $y$. By the continuity and the monotonicity of $g(t)$,
there exists $T_0\in[0,T_2)$ such that
$x_0=g(T_0)$. Let $$G_{T_2}:=\{(t,x)|t\in(T_0,T_2],\ x\in(g(t),x_0)\},$$
$$
z(t,x):=u(t,x)-u(t,2x_0-x)\ \mbox{for}\ (t,x)\in\overline{G}_{T_2}.
$$
Using the maximum principle for $z(t,x)$ in $G_{T_2}$ as above we conclude that
$z_x (T_2, x_0)>0$. This contradicts the definition of $x_0$.

Combining the above two steps we obtain $T_1=T_2=+\infty$.
\end{proof}

A direct consequence of Lemma \ref{lem:center} is $g_{\infty}+h_{\infty}\geqslant -2h_0$.
So we have

\begin{cor}\label{3case}
There are only three possible situations for $I_\infty = (g_\infty, h_\infty)$:
{\rm (i)} $I_\infty =\R$; {\rm (ii)} $I_\infty$ is a finite interval;
and {\rm (iii)} $I_\infty =(g_\infty, \infty)$ with $g_\infty>-\infty$.
\end{cor}

\noindent
Indeed, (i) and (ii) are possible when $\beta\in (0,c_0)$ (see Theorem  \ref{thm:small beta}),
(ii) and (iii) are possible when $\beta \geqslant  c_0$ (see Theorems \ref{thm:middle beta}, \ref{thm:beta=c0} and
\ref{thm:large beta}).

By the monotonicity of $u(t,\cdot)$ in $[g(t),-h_0]$, we can prove the boundedness
of $g_\infty$.

\begin{prop}\label{prop:g_infty>-infty}
Assume $\beta\geqslant  c_0$ and $(u,g,h)$ is a solution of \eqref{p}. Then
$g_\infty>-\infty$.
\end{prop}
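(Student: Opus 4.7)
My plan is to show
\[
 \int_{T_0}^{\infty}(-g'(t))\,dt = \mu\int_{T_0}^{\infty} u_x(t,g(t))\,dt < +\infty,
\]
which, since $g'(t)<0$, is equivalent to $g_\infty>-\infty$. The argument splits into establishing the $L^1$-integrability in $t$ of the ``boundary value'' $u(t,-h_0)$, followed by a parabolic boundary-gradient estimate of the form $u_x(t,g(t))\leqslant C\,u(t-1,-h_0)$.

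For the integrability part, Lemma \ref{lem:center} yields the pointwise bound $u(t,x)\leqslant u(t,-h_0)$ on $[g(t),-h_0]$. When $\beta=c_0$, Corollary \ref{cor:-h0 h(t)}(i) provides the algebraic rate $u(t,-h_0)\leqslant Ct^{-5/4}$, and when $\beta>c_0$ the estimate \eqref{left bound beta large} combined with the exponential decay of $Q_A(z)$ as $z\to-\infty$ (whose exponent at $0$ is $c_0/2$, the double root of $\lambda^2-c_0\lambda+f'(0)=0$) yields $u(t,-h_0)\leqslant Ce^{-\delta t}$ for some $\delta>0$. In either case $u(\cdot,-h_0)\in L^1([T_0,\infty))$ for $T_0$ large, and moreover $g(t)\leqslant -h_0-1$ for $t\geqslant T_0$ by the monotonicity of $g$, so $L(t):=-h_0-g(t)\geqslant 1$.

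For the gradient bound, I would introduce a linear super-solution $\bar u$ on the moving strip $\{(t,x):t\geqslant T_0,\;g(t)\leqslant x\leqslant -h_0\}$ satisfying the linearized equation $\bar u_t=\bar u_{xx}-\beta\bar u_x+f'(0)\bar u$ with boundary data $\bar u(t,g(t))=0$, $\bar u(t,-h_0)=u(t,-h_0)$, and initial data $\bar u(\tau,\cdot)\equiv u(\tau,-h_0)$ on successive time windows $[\tau,\tau+1]$. Because $f(v)\leqslant f'(0)v$, $\bar u$ is a super-solution of the nonlinear problem satisfied by $u$, so $\bar u\geqslant u$ by the maximum principle. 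Since $\bar u$ and $u$ share the common zero at $x=g(t)$, the Hopf lemma gives $u_x(t,g(t))\leqslant\bar u_x(t,g(t))$. Straightening the boundary via $y=(x-g(t))/L(t)$ transforms the problem to a fixed domain $[0,1]$ with a diffusion coefficient $1/L(t)^2$ and a convective term depending on $g'(t)$; parabolic boundary Schauder estimates then give $\bar u_x(t,g(t))\leqslant C\|\bar u\|_{L^\infty}\leqslant C u(\tau,-h_0)$ for $t\in[\tau,\tau+1]$, with $C$ independent of $\tau$ because the associated Dirichlet eigenvalue $\beta^2/4-f'(0)+(\pi/L(t))^2$ is uniformly positive precisely under the standing hypothesis $\beta\geqslant c_0=2\sqrt{f'(0)}$. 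Combining this with the $L^1$-bound on $u(\cdot,-h_0)$ yields the required integrability and hence $g_\infty>-\infty$.

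The main obstacle is the uniform parabolic regularity estimate on the moving domain: the free boundary $g$ precludes a direct application of interior Schauder estimates at $x=g(t)$, and one must track the coefficients generated by the time dependence of $L(t)$ and $g'(t)$ in the straightened problem. The assumption $\beta\geqslant c_0$ is used twice—once to make the characteristic roots $\lambda_\pm=(\beta\pm\sqrt{\beta^2-4f'(0)})/2$ real and positive (so that the stationary linear operator is dissipative in its Dirichlet eigenvalues), and once through Corollary \ref{cor:-h0 h(t)}(i) to obtain the summable decay of $u(t,-h_0)$; both are exactly the ingredients that fail when $\beta<c_0$, consistent with the spreading conclusion of Theorem \ref{thm:small beta}(ii).
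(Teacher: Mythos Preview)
Your strategy---show $\int_{T_0}^\infty |g'(t)|\,dt < \infty$ by combining the decay of $u(t,-h_0)$ with a boundary gradient bound $u_x(t,g(t)) \leqslant C\,u(t-1,-h_0)$---rests on the same ingredients the paper uses, namely Lemma~\ref{lem:center} and the decay estimates of Corollary~\ref{cor:-h0 h(t)} and~\eqref{left bound beta large}. The difficulty is in your implementation of the gradient bound.

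When you straighten $[g(t),-h_0]$ to $[0,1]$ via $y=(x-g(t))/L(t)$, the diffusion coefficient becomes $1/L(t)^2$. Since the very thing you are trying to rule out is $L(t)\to\infty$, you cannot assume $L$ is bounded, and standard Schauder constants blow up as the ellipticity constant $1/L^2\to 0$; it is not clear that the resulting bound on $u_y(t,0)$, once divided by $L$, gives a constant independent of $L$. Your Dirichlet-eigenvalue remark does not resolve this: for $\beta=c_0$ the eigenvalue $\beta^2/4-f'(0)+\pi^2/L^2=\pi^2/L^2$ is \emph{not} uniformly positive as $L\to\infty$, and in any case that eigenvalue governs decay of the homogeneous Dirichlet problem, not the boundary gradient of the inhomogeneous one with nonzero data at $x=-h_0$. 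There is also a corner discontinuity in your $\bar u$ at $(\tau,g(\tau))$ (constant initial data versus zero lateral data), so $\bar u_x(t,g(t))\to\infty$ as $t\downarrow\tau$; you would need to wait until $t=\tau+1$ and still argue that the constant does not depend on $L(\tau)$.

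The paper avoids all of this by working on a strip of \emph{fixed} width $\pi/2$ and comparing at the level of the free boundary problem (Lemma~\ref{lem:comp2}) rather than estimating $u_x(t,g(t))$. It writes down an explicit super-solution
\[
w(t,x)=\epsilon\,e^{-\delta t}e^{\frac{\beta}{2}(x+k(t))}\sin(x+k(t)),\qquad x\in[-k(t),-k(t)+\tfrac{\pi}{2}],
\]
with $k(t)$ chosen so that the Stefan condition $-k'(t)=-\mu w_x(t,-k(t))$ holds; since $k'(t)$ is proportional to $e^{-\delta t}$ (respectively $(t+T)^{-5/4}$ when $\beta=c_0$), $k$ stays bounded and comparison gives $g(t)\geqslant -k(t)$ directly. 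Your approach can be salvaged by replacing the Schauder step with an explicit local barrier on a fixed-width strip near $x=g(t)$, but once you do that you have essentially reproduced the paper's construction.
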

\begin{proof}
First we consider the case $\beta>c_0$. Let $f_A(s)$ be defined as in \eqref{fA}
and let $Q_A(z)$ be the unique solution of \eqref{c0} with $c=c_0$, with $f$
replaced by $f_A$ and $q(+\infty)=1$ replaced by $q(+\infty)=A$, as in \eqref{left bound beta large}. Since
\begin{equation}\label{QA(-infty)}
  Q_A(z)\sim-Cze^{\frac{c_0}{2}z}\mbox{ as }z\to-\infty
\end{equation}
for some $C>0$ (cf.  \cite{AW,HNRR}), there exist $T_1>0$, $C_1>0$ such that,
when $t\geqslant  T_1$,
\begin{eqnarray*}
  Q_A(-h_0-(\beta-c_0)t+x_0)&\leqslant & -2C(-h_0-(\beta-c_0)t+x_0)e^{\frac{c_0}{2}(-h_0-(\beta-c_0)t+x_0)} \\
  &\leqslant &C_1 te^{-\frac{c_0}{2}(\beta-c_0)t}\leqslant  C_1 e^{-\frac{c_0}{4}(\beta-c_0)t},
\end{eqnarray*}
where $x_0>0$ is large such that \eqref{left bound beta large} holds. By Lemma
\ref{lem:center} and \eqref{left bound beta large} we have
\begin{equation}\label{u(t,x)<=C1e^{-t/4}}
  u(t,x)\leqslant  u(t,-h_0)\leqslant  Q_A(-h_0-(\beta-c_0)t+x_0)\leqslant  C_1e^{-\frac{c_0}{4}(\beta-c_0)t}
\end{equation}
for $x\in[g(t),-h_0]$ and $t\geqslant  T_1$. Set $\delta:=\min\{1,\frac{c_0}{4}(\beta-c_0)\}$,
$\epsilon_1:=C_1e^{-\frac{\beta\pi+c_0(\beta-c_0)T_1}{4}}$,
$$k(t):=-g(T_1)+\frac{\pi}{2}+\frac{\mu\epsilon_1}{\delta}(1-e^{-\delta t})\mbox{ for }t\geqslant 0$$
and
$$w(t,x):=\epsilon_1 e^{-\delta t}e^{\frac{\beta}{2}(x+k(t))}\sin(x+k(t))\quad
\mbox{ for }-k(t)\leqslant  x\leqslant -k(t)+\frac{\pi}{2},\ t\geqslant 0.$$
A direct calculation shows that
\begin{eqnarray*}
  w_t-w_{xx}+\beta w_x-f(w)&=&w\Big[1-\delta+\frac{\beta}{2}k'(t)+\frac{\beta^2}{4}\Big]+k'(t)\hat{w}-f(w) \\
  &\geqslant& w\Big[1-\delta+\frac{\beta^2}{4}-f'(0)\Big]\geqslant(1-\delta)w\geqslant0,
\end{eqnarray*}
for $-k(t)\leqslant x\leqslant -k(t)+\frac{\pi}{2},\ t>0$, where
$\hat{w}=\epsilon_1 e^{-\delta t}e^{\frac{\beta}{2}(x+k(t))}\cos(x+k(t))$,
$$-k'(t)=-\epsilon_1\mu e^{-\delta t}=-\mu w_x(t,-k(t)),\quad t>0,$$
and
$$
w\Big(t,-k(t)+\frac{\pi}{2}\Big)=\epsilon_1 e^{\frac{\beta\pi}{4}}e^{-\delta t}
\geqslant  C_1 e^{-\frac{c_0(\beta-c_0)}{4}(t+T_1)}\geqslant  u(t+T_1,x)
$$
for $g(t+T_1)\leqslant  x\leqslant -h_0$, $t>0$. Hence for $t\geqslant 0$, either
$g(t+T_1)\geqslant -k(t)+\frac{\pi}{2}\geqslant  g(T_1)-\frac{\mu\epsilon_1}{\delta}$,
or $u(t+T_1,\cdot)$ and $w(t,\cdot)$ have common domain. In the latter case, by comparing them on their
common domain we have
$$g(t+T_1)\geqslant -k(t)\geqslant  g(T_1)-\frac{\pi}{2}-\frac{\mu\epsilon_1}{\delta}>-\infty.$$
This proves $g_\infty>-\infty$.

Next we consider the case $\beta=c_0$. By Corollary \ref{cor:-h0 h(t)} and Lemma
\ref{lem:center}, there exist $T_2>\frac{5}{4}$ and $C>0$ such that
$$u(t,x)\leqslant  u(t,-h_0)\leqslant  Ct^{-\frac{5}{4}}\mbox{ for }g(t)\leqslant  x\leqslant -h_0,\ t\geqslant  T_2.$$
Set $\epsilon_2:=Ce^{-\frac{\beta\pi}{4}}$ and define
$$k_2(t):=-g(T_2)+\frac{\pi}{2}+4\mu\epsilon_2[T_2^{-\frac{1}{4}}-(t+T_2)^{-\frac{1}{4}}]\mbox{ for }t>0,$$
$$w_2(t,x):=\epsilon_2(t+T_2)^{-\frac{5}{4}}e^{\frac{\beta}{2}(x+k_2(t))}\sin(x+k_2(t))
\mbox{ for }-k_2(t)\leqslant  x\leqslant -k_2(t)+\frac{\pi}{2},\ t\geqslant 0.$$
A similar discussion as above shows that $(w_2,-k_2,-k_2+\frac{\pi}{2})$ is an upper
solution, and so
$$g(t+T_2)\geqslant -k_2(t)\geqslant  g(T_2)-\frac{\pi}{2}-4\mu\epsilon_2 T_2^{-\frac{1}{4}}>-\infty.$$
This proves the proposition.
\end{proof}

\medskip
Next we prove the boundedness of $h_\infty$ when $\beta \geqslant  \beta^*$.

\begin{prop}\label{>=b^*-h_infty}
Assume $\beta\geqslant \beta^*$ and $(u,g,h)$ is a solution of \eqref{p}. Then
$h_\infty<\infty$.
\end{prop}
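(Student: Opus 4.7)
The plan is to mirror the proof of Proposition \ref{prop:g_infty>-infty}, but build a small barrier just to the right of $u$'s support, exploiting that $\beta\geqslant\beta^{*}>c_{0}$ forces exponential-in-$t$ decay of $u$ at any bounded spatial point. Indeed \eqref{left bound beta large} together with the asymptotics $Q_{A}(z)\sim -Cze^{c_{0}z/2}$ from \eqref{QA(-infty)} give, for every bounded interval of $x$-values, a constant $C_{1}>0$ with
\[
u(t,x)\leqslant Q_{A}(x-(\beta-c_{0})t+x_{0})\leqslant C_{1}(t+1)\exp\!\Bigl(-\tfrac{c_{0}(\beta-c_{0})}{2}(t+1)\Bigr)
\]
for $t$ sufficiently large. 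Pick $T>0$ large, $\delta\in(0,\tfrac{c_{0}(\beta-c_{0})}{4})$ and $\varepsilon>0$ small (to be fixed below), and define
\[
K(t):=h(T)+\tfrac{\pi}{2}+\tfrac{\mu\varepsilon}{\delta}\bigl(1-e^{-\delta t}\bigr),\qquad w(t,x):=\varepsilon e^{-\delta t}e^{\frac{\beta}{2}(x-K(t))}\sin\bigl(K(t)-x\bigr)
\]
on $[K(t)-\tfrac{\pi}{2},K(t)]$. By construction $w(t,K(t))=0$ and $K'(t)=\mu\varepsilon e^{-\delta t}=-\mu w_{x}(t,K(t))$, so the Stefan condition at the right endpoint holds with equality.

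A direct calculation, parallel to the one in Proposition \ref{prop:g_infty>-infty}, yields
\[
w_{t}-w_{xx}+\beta w_{x}-f(w)\;\geqslant\;w\Bigl[1-\delta-\tfrac{\beta}{2}K'(t)+\tfrac{\beta^{2}}{4}-f'(0)\Bigr]+K'(t)\hat w,
\]
where $\hat w=\varepsilon e^{-\delta t}e^{\frac{\beta}{2}(x-K(t))}\cos(K(t)-x)\geqslant 0$ on the support of $w$. Because $\beta\geqslant\beta^{*}>c_{0}=2\sqrt{f'(0)}$ we have $\tfrac{\beta^{2}}{4}-f'(0)>0$, and taking $\delta,\varepsilon$ small enough keeps the bracket nonnegative, so $w$ is a supersolution. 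To secure the side condition $u(T+t,K(t)-\pi/2)\leqslant w(t,K(t)-\pi/2)=\varepsilon e^{-\beta\pi/4}e^{-\delta t}$, note that $K(t)-\pi/2\in[h(T),h(T)+\mu\varepsilon/\delta]$ lies in a bounded range, so the decay estimate above applies there. With our choice of $\delta$, the function $t\mapsto(T+t)e^{-\frac{c_{0}(\beta-c_{0})}{2}(T+t)+\delta t}$ is nonincreasing on $t\geqslant 0$ once $T>4/(c_{0}(\beta-c_{0}))$, with maximum $Te^{-\frac{c_{0}(\beta-c_{0})}{2}T}$ at $t=0$ tending to $0$ as $T\to\infty$. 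Taking $\varepsilon:=C_{1}Te^{-\frac{c_{0}(\beta-c_{0})}{2}T}e^{\beta\pi/4}$ then simultaneously makes $\varepsilon$ admissibly small for the supersolution bracket above and secures the side inequality for all $t\geqslant 0$.

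The argument concludes by the dichotomy employed in Proposition \ref{prop:g_infty>-infty}: for each $t\geqslant 0$, either the supports of $w(t,\cdot)$ and $u(T+t,\cdot)$ are disjoint, in which case $h(T+t)\leqslant K(t)-\tfrac{\pi}{2}\leqslant h(T)+\mu\varepsilon/\delta$; or they overlap and Lemma \ref{lem:comp2} (applied after a time shift to the instant when overlap first occurs) forces $h(T+t)\leqslant K(t)\leqslant h(T)+\tfrac{\pi}{2}+\mu\varepsilon/\delta$. In either case $h_{\infty}<\infty$. The main obstacle is the simultaneous tuning of $\delta$, $\varepsilon$, $T$: $\delta$ must be small so that the time decay of $u$ at the bounded point $K(t)-\pi/2$ outpaces the factor $e^{\delta t}$; $\varepsilon$ must be small so that the bracket in the supersolution inequality stays nonnegative; and $T$ must be large so that the size of $\varepsilon$ forced by the side condition is still admissibly small. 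The strict gap $\tfrac{\beta^{2}-c_{0}^{2}}{4}>0$ (available because $\beta\geqslant\beta^{*}>c_{0}$) is precisely the slack that makes these three constraints jointly achievable.
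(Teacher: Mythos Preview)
Your barrier construction and the supersolution computation are fine, and for $\beta>\beta^{*}$ your argument is essentially the paper's Step~1. The genuine gap is the borderline case $\beta=\beta^{*}$. Your side-condition check rests on the claim that $K(t)-\tfrac{\pi}{2}\in[h(T),h(T)+\mu\varepsilon/\delta]$ ``lies in a bounded range, so the decay estimate above applies there.'' But this interval is bounded only in \emph{width}; its location moves with $h(T)$, and the constant $C_{1}$ in $u(t,x)\leqslant C_{1}(t+1)e^{-c_{0}(\beta-c_{0})(t+1)/2}$ depends on that location through a factor of order $e^{c_{0}h(T)/2}$. Since the a~priori bound \eqref{right bound} only gives $h(T)\leqslant c^{*}(\beta)T+H$, your $\varepsilon=C_{1}Te^{-c_{0}(\beta-c_{0})T/2}e^{\beta\pi/4}$ is of order $Te^{c_{0}(c^{*}(\beta)-(\beta-c_{0}))T/2}$: this tends to $0$ when $\beta>\beta^{*}$ (where $c^{*}(\beta)<\beta-c_{0}$), but for $\beta=\beta^{*}$ it equals $T\cdot e^{0}=T\to\infty$, so $\varepsilon$ cannot be made small. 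Put differently, in the scenario $h(t)=(\beta^{*}-c_{0})t+O(1)$ that you must rule out, the value $u(T+t,K(t)-\tfrac{\pi}{2})$ can be of order $1$ for moderate $t$ (the point $K(t)-\tfrac{\pi}{2}\approx h(T)$ sits well inside the support of $u(T+t,\cdot)$ once $h(T+t)-h(T)\approx(\beta^{*}-c_{0})t$ is of order~$1$), so the side inequality $u\leqslant w=\varepsilon e^{-\beta\pi/4}e^{-\delta t}$ fails. A quick sanity check: your argument uses only $\beta>c_{0}$ (the ``strict gap'' $(\beta^{2}-c_{0}^{2})/4>0$ you invoke is available for every $\beta>c_{0}$), so if it were correct it would prove $h_{\infty}<\infty$ for all $\beta\in(c_{0},\beta^{*})$ as well, contradicting the virtual spreading established in Theorem~\ref{thm:middle beta}.

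The paper handles $\beta=\beta^{*}$ by an entirely different mechanism. It first uses a zero-number argument (Lemma~\ref{lem:zeros between u and Psi}) to show that the semi-wave $U^{*}(x-c^{*}(\beta^{*})t+h_{0})$ eventually overtakes $h(t)$, forcing $u(t,\cdot)<U^{*}(\cdot-c^{*}(\beta^{*})t+h_{0})$ for large $t$; this step itself requires ruling out $h(t)=c^{*}(\beta^{*})t+O(1)$ via the refined $t^{-5/4}$ estimate of Corollary~\ref{cor:-h0 h(t)}(ii). Then it caps $u$ by a slightly slower tadpole-like wave $V^{*}_{\delta_{1}}$ (Lemma~\ref{lem:tadpole tw beta=beta*}), which yields $h(t)\leqslant(\beta^{*}-c_{0}-\delta_{1})t+O(1)$ and hence genuine uniform decay of $u$, after which a barrier of your type finishes the job. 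The essential extra ingredient at $\beta=\beta^{*}$ is this intersection-comparison with traveling waves; a pure barrier argument based on pointwise decay at a near-boundary location does not close.
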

\begin{proof}
1. First we consider the case $\beta>\beta^*$. In this case we have
$c^*(\beta)<\beta-c_0$. Denote $\nu:=\beta-c_0-c^*(\beta)>0$. By \eqref{left bound beta large},
\eqref{right bound} and \eqref{QA(-infty)}, there exist $T_1>0$, $C_1>0$ such that,
for $x\in(g(t),h(t))$ and $t\geqslant  T_1$, we have
\begin{eqnarray*}
  u(t,x)&\leqslant & Q_A(x-(\beta-c_0)t+x_0)\leqslant  Q_A(h(t)-(\beta-c_0)t+x_0) \\
  &\leqslant & Q_A(-\nu t+H+x_0)\leqslant -2C(-\nu t+ H +x_0)e^{\frac{c_0}{2}(-\nu t+ H +x_0)} \\
  &\leqslant & C_1e^{-\frac{c_0\nu}{4}t}.
\end{eqnarray*}
Set $\delta:=\frac{1}{2}\min\{1,\frac{c_0\nu}{4}\}$ and choose $T_2>T_1$ such that
$$
\epsilon_3 :=C_1 e^{\frac{\beta\pi-c_0\nu T_2}{4}} < \frac{2}{\beta \mu}.
$$
Define
$$
k_3(t):=h(T_2)+\frac{\pi}{2}+\frac{\mu\epsilon_3}{\delta}(1-e^{-\delta t})\mbox{ for } t\geqslant 0
$$
and
$$
w_3(t,x):=\epsilon_3 e^{-\delta t}e^{\frac{\beta}{2}(x-k_3(t))}\cos \Big( x-k_3(t)+\frac{\pi}{2} \Big)
\mbox{ for }k_3(t)-\frac{\pi}{2}\leqslant  x\leqslant  k_3(t),\ t\geqslant 0.
$$
A direct calculation as in the proof of Proposition \ref{prop:g_infty>-infty} shows
that $(w_3,k_3(t)-\frac{\pi}{2},k_3(t))$ is an upper solution and
$$h(t+T_2)\leqslant  k_3(t)\leqslant  h(T_2)+\frac{\pi}{2}+\frac{\mu\epsilon_3}{\delta}<\infty.$$

\medskip

2. Next we consider the case $\beta=\beta^*$. We first show that, for some
large $T_3$,
\begin{equation}\label{u(t,x)<U*(x-c*t+h0)}
  u(t,x)<U^*(x-c^*(\beta^*)t+h_0)\mbox{ for }x\in[g(t),h(t)],\ t\geqslant  T_3,
\end{equation}
where $U^*(x-c^*(\beta^*)t+h_0)$ is the rightward traveling semi-wave with endpoint at $c^*(\beta^*) t-h_0$.
At time $t=0$, $u(t,x)$ and $U^*(x+h_0)$ intersect at $x=-h_0$. Then for small time
$t>0$, they intersect at exact one point.

We claim that the case $c^*(\beta^*) t-h_0<h(t)$ for
all $t\geqslant 0$ is impossible. Otherwise, combining with \eqref{right bound} we have
$h(t)=c^* (\beta^*) t+O(1)$, and so by Corollary \ref{cor:-h0 h(t)} there exist $T_4>0$ and $C>0$
such that
$$
u(t,x)\leqslant  Ct^{-\frac{5}{4}}\mbox{ for }h(t)-\frac{\pi}{2}\leqslant  x\leqslant  h(t),\ t\geqslant  T_4.
$$
Set $\epsilon_4:=Ce^{\frac{\beta\pi}{4}}$, $T_5:=\max\{1,T_4, \frac{5}{4}+\frac{\beta\mu\epsilon_4}{2}\}$
and define
$$
k_4(t):=h(T_5)+\frac{\pi}{2}+4\mu\epsilon_4[T_5^{-\frac{1}{4}}-(t+T_5)^{-\frac{1}{4}}],\quad t\geqslant 0,
$$
$$
w_4(t,x):=\epsilon_4(t+T_5)^{-\frac{5}{4}}e^{\frac{\beta}{2}(x-k_4(t))}\cos\Big(x-k_4(t)+\frac{\pi}{2}\Big),
\quad k_4(t)-\frac{\pi}{2}\leqslant  x\leqslant  k_4(t),\ t\geqslant 0.
$$
A direct calculation shows that $(w_4,k_4(t)-\frac{\pi}{2},k_4(t))$ is an upper solution,
and so
$$h(t+T_5)\leqslant  k_4(t)\leqslant  h(T_5)+\frac{\pi}{2}+4\mu\epsilon_4 T_5^{-\frac{1}{4}}<\infty,$$
contradicts our assumption $c^*(\beta^*) t-h_0<h(t)$ for all $t$.

Therefore, there exists $T_6>0$ such that $h(T_6)=c^*(\beta^*) T_6-h_0$ and by Lemma
\ref{lem:zeros between u and Psi}, the unique intersection point between $u$ and
$U^*$ disappears after $T_6$. This implies \eqref{u(t,x)<U*(x-c*t+h0)} holds when
$x\in[g(t),h(t)]$ and $t>T_3$ for any $T_3 >T_6$.

On the other hand, by Lemma \ref{lem:tadpole tw beta=beta*},
$V^*_{\delta_1}(z):=V(z;\beta^*-c_0-\delta_1,-c_0-\delta_1)$ approaches $U^*(z)$ locally
uniformly in $(-\infty,0]$ as $\delta_1\to0$. Hence there exists $\delta_1>0$ sufficiently
small such that $V^*_{\delta_1} (z)$ is close to $U^*(z)$ and so
$$
u(T_3,x)<V^*_{\delta_1}(x-c^*(\beta^*)T_3+h_0)\mbox{ for }x\in[g(t),h(t)].
$$
By comparison $u(t+T_3,x)\leqslant V^*_{\delta_1}(x-(\beta^*-c_0-\delta_1)t-c^*(\beta^*)T_3+h_0)$
and so $h(t+T_3)$ is blocked by the right endpoint $(\beta^* -c_0-\delta_1)t+c^*(\beta^*)T_3-h_0$ of
$V^*_{\delta_1}$:
$$
h(t+T_3)\leqslant (\beta^*-c_0-\delta_1)t+c^*(\beta^*)T_3-h_0,\quad t\geqslant0.
$$
Using \eqref{left bound beta large} we see that, for any $x\in[g(t),h(t)]$ and sufficiently
large $t$,
$$u(t,x)\leqslant  Q_A(x-(\beta^*-c_0)t+x_0)\leqslant  Q_A(-\delta_1t+x_1)\leqslant  C_2 e^{-\frac{c_0\delta_1}{4}t},$$
for some $x_1\in\R$ and $C_2>0$. The rest proof is similar as that in Step 1.

This proves the proposition.
\end{proof}

\noindent
 {\bf Proof of Theorem \ref{thm:large beta}:}\ The conclusions follow from Proposition
 \ref{prop:g_infty>-infty}, \ref{>=b^*-h_infty} and Theorem \ref{thm:convergence} immediately.
{\hfill $\Box$}

\subsection{Problem with medium-sized advection: $c_0\leqslant \beta<\beta^*$}
In this subsection we consider the case $\beta\in [c_0,\beta^*)$. In this case,
the long time behavior of the solutions is complicated and more interesting.
Besides vanishing, we find some new phnomena: virtual spreading, virtual
vanishing and convergence to the tadpole-like traveling wave.

In the first part, we give some sufficient conditions for vanishing; in the second part
we give a necessary and sufficient condition for virtual spreading; in the third part we
study the limits of $h'(t)$ and $u(t,x)$ when vanishing and virtual spreading do not happen; in the last
part we finish the proofs of Theorems \ref{thm:middle beta} and \ref{thm:beta=c0}.

\subsubsection{Vanishing phenomena}
When $\beta\geqslant  c_0$, we have $g_\infty > -\infty$ by Proposition \ref{prop:g_infty>-infty},
which implies that $u\to 0$ locally uniformly. We now show that the convergence can be
a uniform one when the initial data $u_0$ is sufficiently small.

\begin{lem}\label{lem:condition for vanishing}
Assume $\beta\geqslant  c_0$ and $(u,g,h)$ is the solution of \eqref{p}. If
$\|u_0\|_{L^{\infty}([-h_0,h_0])}$ is sufficiently small, then vanishing happens.
\end{lem}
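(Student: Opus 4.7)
The plan is to deduce vanishing from the boundedness of $I_\infty$: once both $g_\infty$ and $h_\infty$ are shown to be finite, the last assertion of Theorem~\ref{thm:convergence} yields $\|u(t,\cdot)\|_{L^\infty(g(t),h(t))}\to 0$, which is the definition of vanishing. Proposition~\ref{prop:g_infty>-infty} already provides $g_\infty>-\infty$ for every admissible $u_0$, so the content of this lemma is the right-sided bound $h_\infty<\infty$ when $\|u_0\|_{L^\infty}$ is small.

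I will produce this bound by constructing a right-sided upper solution $w$ analogous to $w_3$ in the proof of Proposition~\ref{>=b^*-h_infty}, and applying Lemma~\ref{lem:comp2}. Take
\[
w(t,x) \;=\; \epsilon\,\eta(t)\,e^{\beta(x-k(t))/2}\cos\!\left(x - k(t) + \tfrac{\pi}{2}\right),\qquad x\in[k(t)-\tfrac{\pi}{2},\,k(t)],
\]
with $k(0)=h_0$ and $k'(t)=\mu\epsilon\,\eta(t)$. Then $w(t,k(t))=0$ and $w_x(t,k(t))=-\epsilon\eta(t)$, so the Stefan condition at $x=k(t)$ is met with equality, and $k_\infty=h_0+\mu\epsilon\int_0^\infty\eta(s)\,ds$ is finite provided $\eta\in L^1(0,\infty)$. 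The PDE verification is the direct calculation carried out in the proofs of Propositions~\ref{prop:g_infty>-infty} and \ref{>=b^*-h_infty}, which yields
\[
w_t-w_{xx}+\beta w_x-f(w)\;\geq\;\Bigl[1+\tfrac{\beta^{2}}{4}-f'(0)+\tfrac{\eta'(t)}{\eta(t)}-\tfrac{\beta}{2}k'(t)\Bigr]\,w+k'(t)\hat w,\qquad \hat w\geq 0.
\]
The hypothesis $\beta\geq c_0$ gives $\beta^{2}/4-f'(0)\geq 0$, so the bracket remains nonnegative once $\epsilon$ is small (keeping $k'=\mu\epsilon\eta$ small) and $\eta$ is chosen with $|\eta'/\eta|$ eventually small.

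The inner-boundary comparison $w(t,k(t)-\tfrac{\pi}{2})\geq u(t,k(t)-\tfrac{\pi}{2})$ required by Lemma~\ref{lem:comp2} dictates the choice of $\eta$. For $\beta>c_0$, the estimate \eqref{left bound beta large} combined with a shift $x_0$ selected so that $u_0(x)\leq Q_A(x+x_0)$ on $[-h_0,h_0]$---a shift that tends to $-\infty$ as $\|u_0\|_{L^\infty}\to 0$ by the asymptotics \eqref{QA(-infty)}---gives exponential decay $u(t,k(t)-\tfrac{\pi}{2})\leq C\,e^{-c_0(\beta-c_0)t/3}$, so I take $\eta(t)=e^{-\delta t}$ with any $\delta<c_0(\beta-c_0)/3$. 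For $\beta=c_0$ the bound \eqref{left bound beta large} is only uniform; instead I invoke \eqref{left 1}, which gives $u(t,x)\leq C(1+t/t_0)^{-5/4}$ at any bounded $x$, and take $\eta(t)=(t+T)^{-5/4}$ for a large $T$, still integrable at infinity. The initial inequality $w(0,\cdot)\geq u_0$ on $[h_0-\pi/2,h_0]$ is closed by choosing $\epsilon$ proportional to a suitable norm of $u_0$ (using $u_0(h_0)=0$ and standard parabolic regularity).

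The main technical obstacle is the borderline case $\beta=c_0$: the residual buffer $\beta^{2}/4-f'(0)$ vanishes, so the only positive margin in the PDE inequality is the $+1$ coming from the cosine's spatial frequency, which must absorb both $\eta'/\eta$ and the advective correction $-\tfrac{\beta}{2}k'(t)$. Both can be made arbitrarily small by taking $T$ large and $\epsilon$ small, but the initial comparison then has to be closed by shifting to a slightly later starting time $T_0>0$ at which $u(T_0,\cdot)$ is already small on $[h_0-\pi/2,h_0]$---paralleling the introduction of $T_2$ in the proofs of Propositions~\ref{prop:g_infty>-infty} and \ref{>=b^*-h_infty}.
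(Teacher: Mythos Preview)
Your strategy is workable in spirit but takes a much more circuitous route than the paper, and the $\beta=c_0$ case has a genuine gap.

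The paper does \emph{not} split into a left and a right estimate. It builds a single two-sided supersolution $(w,-k,k)$ via Lemma~\ref{lem:comp1}, with
\[
k(t)=h_0\Bigl(1+\delta-\tfrac{\delta}{2}e^{-\delta t}\Bigr),\qquad
w(t,x)=\epsilon\,e^{-\delta t}\,e^{\beta(x-k(t))/2}\cos\frac{\pi x}{2k(t)},
\]
so the cosine's frequency is $\pi/(2k(t))$, adapted to the domain, and there is \emph{no inner boundary} to control. The PDE check reduces to the elementary inequality $\pi^2/(h_0^2(1+\delta)^2)\geq 4\delta+\beta h_0\delta^2$, which holds for $\delta$ small; then $\epsilon$ is fixed by the Stefan conditions and the only requirement on $u_0$ is the pointwise bound $u_0\leq w(0,\cdot)$. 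This is self-contained: it uses neither Proposition~\ref{prop:g_infty>-infty} nor any of the HNRR estimates, and it treats all $\beta\geq c_0$ at once.

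Your one-sided construction through Lemma~\ref{lem:comp2} forces you to dominate $u$ at the moving inner boundary $x=k(t)-\pi/2$ for \emph{all} $t\geq 0$. For $\beta>c_0$ this can be closed via \eqref{left bound beta large}, since as $\|u_0\|_\infty\to 0$ the shift $x_0\to-\infty$ and the $Q_A$-bound is already deep in its exponential tail at $t=0$; the bookkeeping is tedious but goes through. For $\beta=c_0$, however, your appeal to \eqref{left 1} is problematic: the constants $t_0$ and $C_4$ there come from the HNRR front estimates and depend on $u_0$, and the bound only applies once $Y(t)=(3/c_0)\ln(1+t/t_0)$ has passed $k(t)-\pi/2$. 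As $\|u_0\|_\infty\to0$ the parameter $t_0$ need not stay bounded, so the time $T_0$ you must wait before the $t^{-5/4}$ decay kicks in may grow, while $u(T_0,\cdot)$ can meanwhile grow by a factor $e^{f'(0)T_0}$. You have not shown that this circular dependence closes; the linear heat bound gives only $t^{-1/2}$, which is not integrable and therefore cannot drive your $k'\in L^1$ mechanism. In short, the $\beta=c_0$ case needs an argument you have not supplied, whereas the paper's two-sided barrier avoids the issue entirely.
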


\begin{proof}
Choose $\delta>0$ small such that
$$
\frac{\pi^2}{h_0^2(1+\delta)^2}\geqslant 4\delta+\beta h_0\delta^2.
$$
Set $k(t):=h_0(1+\delta-\frac{\delta}{2}e^{-\delta t})$,  $\epsilon:= \frac{h_0^2\delta^2}{\pi\mu}(1+\frac{\delta}{2})$
and
\begin{equation}\label{vanishing-w}
  w(t,x):=\epsilon e^{-\delta t}e^{\frac{\beta}{2}(x-k(t))}\cos \frac{\pi x}{2k(t)}\ \ \mbox{ for }-k(t)\leqslant  x\leqslant  k(t),\ t>0.
\end{equation}
A direct calculation shows that, for $x\in(-k(t),k(t))$ and $t>0$,
$$
w_t-w_{xx}+\beta w_x-f(w)\geqslant \frac{1}{4}\Big( \frac{\pi^2}{h_0^2(1+\delta)^2}-
4\delta-\beta h_0\delta^2 \Big) w\geqslant 0.
$$
On the other hand, for any $t>0$,
$$
\mu w_x(t,-k(t))\leqslant -\mu w_x(t,k(t))=\frac{\pi\mu \epsilon}{2k(t)}e^{-\delta t}
\leqslant \frac{\pi\mu \epsilon}{2h_0(1+\frac{\delta}{2})}e^{-\delta t} =\frac{\delta^2}{2}h_0e^{-\delta t}=k'(t).
$$
Hence $(w,-k,k)$ is an upper solution of \eqref{p}.
Clearly $k(0)=h_0(1+\frac{\delta}{2})>h_0$ and $w(t, \pm k(t))=0$ for $t\geqslant 0$.
If $\|u_0\|_{L^\infty([-h_0,h_0])}$ is small such that
$$
\|u_0\|_{L^\infty([-h_0,h_0])}\leqslant  \epsilon e^{-\frac{\beta}{2}h_0(2+\frac{\delta}{2})}\cos \frac{\pi}{2+\delta}=w(0,-h_0),
$$
then $u_0(x)\leqslant  w(0,x)$ for $x\in[-h_0,h_0]$. By the comparison principle, we have
$$
-h_0(1+\delta)\leqslant  -k(t)\leqslant  g(t)<h(t)\leqslant  k(t)\leqslant  h_0(1+\delta),
$$
$$
\|u(t,\cdot)\|_{L^\infty([g(t),h(t)])} \leqslant  \|w(t,\cdot)\|_{L^\infty([-k(t),k(t)])} \leqslant  \epsilon e^{-\delta t}
\to 0\quad \mbox{as } t\to \infty.
$$
This proves the lemma.
\end{proof}

For any given $h_0>0$ and $\phi\in\mathscr {X}(h_0)$, we write the solution $(u,g,h)$ also
as $(u(t,x;\sigma\phi)$, $g(t;\sigma\phi)$,$h(t;\sigma\phi))$ to emphasize the dependence on the
initial data $u_0=\sigma\phi$. Set
\begin{equation}\label{def E0}
  E_0:=\{\sigma\geqslant 0 \mid \mbox{vanishing happens for } u(\cdot,\cdot;\sigma\phi)\},\quad \sigma_*:=\sup E_0.
\end{equation}
Lemma \ref{lem:condition for vanishing} implies that $\sigma\in E_0$ for all small $\sigma>0$.
By the comparison principle we have $[0,\sigma_*)\subset E_0$.
In case $\sigma_*=\infty$ (this happens in particular when $\liminf_{s\to \infty} \frac{-f(s)}{s} \gg 1$
and $\beta =0$, see \cite[Proposition 5.4]{DuLou}), there is nothing left to prove.
Hence we only consider the case $\sigma_*\in(0,\infty)$.

\begin{thm}\label{thm:sigma_*}
Assume $c_0\leqslant \beta<\beta^*$. For any $\phi\in\mathscr {X}(h_0)$, let $E_0$ and $\sigma_*$
be defined as in \eqref{def E0}. If $\sigma_*\in(0,\infty)$, then $E_0 = [0,\sigma_*)$. If
$\sigma\geqslant  \sigma_*$, then $g(\infty;\sigma\phi)>-\infty$, $h(\infty;\sigma\phi)=\infty$,
and $u(t,\cdot;\sigma\phi)\to 0$ locally uniformly in $(g(\infty;\sigma\phi), \infty)$.
\end{thm}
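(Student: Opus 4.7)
The plan is to prove the theorem in three successive claims: (a) $[0,\sigma_*)\subset E_0$; (b) $\sigma_*\notin E_0$; and (c) the structural properties hold for every $\sigma\geqslant \sigma_*$. Claim (a) is straightforward from the comparison principle: if $0\leqslant \sigma_1<\sigma_2$ and $\sigma_2\in E_0$, then $\sigma_1\phi\leqslant \sigma_2\phi$, and Lemma \ref{lem:comp1} gives $g(t;\sigma_1\phi)\geqslant g(t;\sigma_2\phi)$, $h(t;\sigma_1\phi)\leqslant h(t;\sigma_2\phi)$, and $u(t,\cdot;\sigma_1\phi)\leqslant u(t,\cdot;\sigma_2\phi)$. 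Therefore $I_\infty(\sigma_1\phi)$ is bounded, and Theorem \ref{thm:convergence} yields uniform convergence to $0$, i.e., $\sigma_1\in E_0$. This shows $E_0$ is a left-closed interval with right endpoint $\sigma_*$.

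Claim (b) is the main obstacle and is proved by contradiction. Suppose $\sigma_*\in E_0$. By Proposition \ref{prop:g_infty>-infty} we always have $g_\infty(\sigma_*\phi)>-\infty$, so vanishing forces $h_\infty(\sigma_*\phi)<\infty$ and $\|u(t,\cdot;\sigma_*\phi)\|_{L^\infty}\to 0$ as $t\to\infty$. Fix $\tilde h_0>0$ with $[g_\infty(\sigma_*\phi),h_\infty(\sigma_*\phi)]\Subset (-\tilde h_0,\tilde h_0)$, and choose $\tilde\delta>0$ so that $\pi^2/[\tilde h_0^2(1+\tilde\delta)^2]\geqslant 4\tilde\delta+\beta\tilde h_0\tilde\delta^2$, as in the proof of Lemma \ref{lem:condition for vanishing}. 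Then pick $T>0$ large enough that
\[
[g(T;\sigma_*\phi),h(T;\sigma_*\phi)]\subset(-\tilde h_0,\tilde h_0),\qquad \|u(T,\cdot;\sigma_*\phi)\|_{L^\infty}<\tfrac12\eta,
\]
where $\eta$ is the smallness threshold for the upper solution \eqref{vanishing-w} built on $[-\tilde h_0,\tilde h_0]$ with parameter $\tilde\delta$. By continuous dependence of $(u,g,h)$ on the initial data on the compact time interval $[0,T]$ (cf.\ \cite{DuLin}), there exists $\nu>0$ such that for every $\sigma\in(\sigma_*,\sigma_*+\nu)$ one still has $[g(T;\sigma\phi),h(T;\sigma\phi)]\subset(-\tilde h_0,\tilde h_0)$ and $\|u(T,\cdot;\sigma\phi)\|_{L^\infty}<\eta$. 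Applying the construction of Lemma \ref{lem:condition for vanishing} with initial time shifted to $T$ (a direct rerun of the argument, with the $\cos(\pi x/2k(t))$ upper solution over $[-\tilde h_0,\tilde h_0]$), we conclude that vanishing happens for $u(\cdot,\cdot;\sigma\phi)$, hence $\sigma\in E_0$. This contradicts $\sigma_*=\sup E_0$.

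Claim (c) then follows quickly. Take $\sigma\geqslant \sigma_*$. By (a) and (b), $\sigma\notin E_0$, so vanishing does not occur. Proposition \ref{prop:g_infty>-infty} gives $g_\infty(\sigma\phi)>-\infty$, which rules out case (i) of Corollary \ref{3case}, and the failure of vanishing rules out case (ii); therefore case (iii) must hold, i.e., $h_\infty(\sigma\phi)=\infty$. Finally, because $\beta\geqslant c_0$, Theorem \ref{thm:convergence} (equivalently Lemma \ref{lem:beta>c0 u to 0}) yields $u(t,\cdot;\sigma\phi)\to 0$ locally uniformly in $I_\infty(\sigma\phi)=(g_\infty(\sigma\phi),\infty)$, completing the proof. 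The only delicate step is the continuous-dependence + time-shifted upper-solution argument in (b); the rest is assembly of previously established facts.
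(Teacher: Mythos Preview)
Your proposal is correct and follows essentially the same approach as the paper: both arguments use the comparison principle for monotonicity in $\sigma$, then invoke the explicit upper solution from Lemma~\ref{lem:condition for vanishing} together with continuous dependence at a finite time to show that $E_0\setminus\{0\}$ is open (you phrase this as $\sigma_*\notin E_0$, the paper shows openness at an arbitrary $\sigma_0\in E_0$), and finally appeal to Proposition~\ref{prop:g_infty>-infty} and Theorem~\ref{thm:convergence} for the structural conclusions when $\sigma\geqslant\sigma_*$. One cosmetic point: in claim (c) the fact that $\sigma>\sigma_*$ implies $\sigma\notin E_0$ follows from $\sigma_*=\sup E_0$ rather than from (a) and (b) directly, but this is already implicit in your description of $E_0$ as an interval with right endpoint $\sigma_*$.
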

\begin{proof}
For any positive $\sigma_0\in E_0$, since $u(t,\cdot;\sigma_0\phi)\to 0$ uniformly,
we can find a large $T_0>0$ such that $u(T_0,x;\sigma_0\phi)< w(0,x)$, where $w(t,x)$
is defined as in \eqref{vanishing-w},
with $h_0$ replaced by $H:= \max\{h(\infty;\sigma_0\phi), -g(\infty;\sigma_0\phi)\}<\infty$.
By continuity, there exists $\epsilon>0$ such that $u(T_0,x;\sigma\phi)< w(0,x)$ for every
$\sigma\in[\sigma_0,\sigma_0+\epsilon)$. As in the proof of Lemma \ref{lem:condition for vanishing},
we conclude that vanishing happens for $u(t,x;\sigma\phi)$, that is, $\sigma \in E_0$.
Therefore, $E_0\backslash \{0\}$ is an open set, and so $E_0=[0,\sigma_*)$.
The rest of the conclusions follow from Theorem \ref{thm:convergence} and Proposition
\ref{prop:g_infty>-infty}.
\end{proof}

We finish this part by giving another sufficient condition for vanishing.

\begin{lem}\label{lem:condition 2 for vanishing}
Assume $c_0 < \beta<\beta^*$. Let $(u,g,h)$ be a solution of \eqref{p}. Then
vanishing happens if there exist $t_1\geqslant 0$, $x_1\in\R$ such that
$$h(t_1)\leqslant  x_1,\ u(t_1,x)\leqslant  V^*(x-x_1)\mbox{ for }x\in[g(t_1),h(t_1)],$$
where $V^*(z)$ is the tadpole-like solution as in Lemma \ref{lem:tadpole tw bata<beta*} (ii).
\end{lem}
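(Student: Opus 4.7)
The strategy is to use $V^*$ translated along its natural traveling trajectory as an upper solution for $(u,g,h)$ from time $t_1$ onward. Set $\bar h(t) := (\beta-c_0)(t-t_1) + x_1$ and $\bar w(t,x) := V^*(x-\bar h(t))$ on $(-\infty, \bar h(t)]$ for $t \geq t_1$. Since $V^*$ solves the ODE \eqref{vv}, a direct substitution gives $\bar w_t = \bar w_{xx} - \beta \bar w_x + f(\bar w)$ with $\bar w(t,\bar h(t)) = 0$ and $\bar h'(t) = \beta - c_0 = -\mu \bar w_x(t,\bar h(t))$; so $\bar w$ is a one-sided classical solution of the PDE with the Stefan condition matched exactly at its right endpoint. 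The hypotheses $h(t_1) \leq x_1 = \bar h(t_1)$ and $u(t_1,\cdot) \leq V^*(\cdot - x_1)$ on $[g(t_1),h(t_1)]$ are precisely what is needed to invoke a variant of Lemma \ref{lem:comp2} extended to upper solutions whose left endpoint is at $-\infty$ (using $\bar w(t,g(t)) = V^*(g(t) - \bar h(t)) \geq 0 = u(t,g(t))$ together with Hopf's lemma at the right free boundary if $h(t)$ ever meets $\bar h(t)$), yielding for all $t \geq t_1$
\[
h(t) \leq \bar h(t), \qquad u(t,x) \leq V^*(x-\bar h(t)) \quad \mbox{for } x \in [g(t),h(t)].
\]

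To turn this comparison into vanishing, I would rely on three facts: (a) $V^*$ has a unique interior maximum at some $z_{\max} < 0$, is strictly increasing on $(-\infty, z_{\max}]$, and decays to $0$ as $z \to -\infty$; (b) its peak value $V^*_{\max}$ is strictly less than $1$, since $\beta - c_0 < P(-c_0)$ whenever $\beta < \beta^*$, keeping the tadpole trajectory strictly inside the saddle separatrix at $(1,0)$; (c) Proposition \ref{prop:g_infty>-infty} gives $g_\infty > -\infty$. The plan is to prove that the gap $d(t) := \bar h(t) - h(t)$ tends to $\infty$. Once $d(t) > |z_{\max}|$, the image $\{x - \bar h(t) : x \in [g(t),h(t)]\}$ lies in $(-\infty, -d(t)]$, which is contained in the monotone-increasing branch of $V^*$, so $\sup_{x} u(t,x) \leq V^*(-d(t)) \to 0$ uniformly. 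Choosing a time $T$ at which $\|u(T,\cdot)\|_{L^\infty}$ is below the smallness threshold appearing in Lemma \ref{lem:condition for vanishing}, and reapplying that lemma with initial time $T$ and initial data $u(T,\cdot)$, produces an upper solution of the form \eqref{vanishing-w} that confines $[g(t),h(t)]$ to a bounded interval; this gives both $h_\infty < \infty$ and $\|u\|_{L^\infty} \to 0$, i.e., vanishing in the strict sense.

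The main obstacle is to prove the divergence $d(t) \to \infty$. The bound $u \leq V^*(\cdot - \bar h(t))$ does not directly control $u_x(t,h(t))$ when $d(t) > 0$, because the nonnegative difference $V^*(\cdot-\bar h(t)) - u(t,\cdot)$ is strictly positive at $x = h(t)$ rather than vanishing there; Hopf's lemma therefore gives the strict inequality $h'(t) < \beta - c_0$ only at those isolated instants when $h(t) = \bar h(t)$, and produces no uniform positive lower bound on $d'(t)$ afterward. To close this gap I would argue by contradiction: assume $\liminf_{t\to\infty} d(t) = d_\infty < \infty$ along some sequence $t_n \to \infty$. By parabolic regularity and Arzel\`a--Ascoli, the functions $(\tau,z) \mapsto u(t_n+\tau, z+\bar h(t_n+\tau))$ subconverge in $C^{1,\alpha}_{\mathrm{loc}}$ to a nontrivial nonnegative entire solution $U_\infty$ of the moving-frame equation $U_\tau = U_{zz} - c_0 U_z + f(U)$, dominated by $V^*$ and with a right free boundary remaining within distance $d_\infty$ of the origin. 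A classification of such entire sub-solutions---via the phase-plane picture of Section 3.2 together with a steepness/uniqueness argument in the spirit of \cite{DGM} (already to be used in Section 5)---forces $U_\infty$ to coincide with the tadpole $V^*$ itself; combined with the fact that $u < V^*(\cdot-\bar h(t))$ strictly in the interior (from the strong maximum principle, since $u(t_1,g(t_1)) = 0 < V^*(g(t_1)-x_1)$), this contradicts the bounded-distance condition $d(t_n) \to d_\infty$, establishing $d(t) \to \infty$ and hence vanishing.
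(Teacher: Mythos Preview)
Your initial comparison step is correct and matches the paper: the tadpole wave $V^*(x-(\beta-c_0)(t-t_1)-x_1)$ is a genuine upper solution satisfying the Stefan condition, so $h(t)\le \bar h(t)$ and $u\le V^*(\cdot-\bar h(t))$ for all $t\ge t_1$.

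The gap is in your attempt to prove $d(t):=\bar h(t)-h(t)\to\infty$ by compactness and classification. Two problems arise. First, the ``classification'' you invoke is not available: the result of \cite{DGM} concerns entire solutions on the whole line, not free-boundary problems, and there is no ready-made statement asserting that an entire solution of the moving-frame problem dominated by $V^*$ must equal $V^*$. Second, even granting $U_\infty\equiv V^*$, you obtain no contradiction: this would force $d_\infty=0$, and the strict inequality $u<V^*(\cdot-\bar h(t))$ for each finite $t>t_1$ is perfectly compatible with $d(t_n)\to 0$, since strict inequalities degenerate to non-strict ones under limits. There is a further difficulty downstream: even if $d(t)\to\infty$ were established, you get only $\|u(t,\cdot)\|_\infty\le V^*(-d(t))$ with no rate, whereas applying Lemma~\ref{lem:condition for vanishing} at a late time $T$ requires smallness depending on the (now large) half-width $\max\{|g(T)|,h(T)\}$, so a slow divergence of $d(t)$ would not suffice.

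The paper avoids all of this with a one-line trick that you are missing. After waiting one time unit so that the comparison is \emph{strict}, it replaces $V^*$ by the nearby tadpole $V_\delta$ of Lemma~\ref{lem:tadpole tw bata<beta*}(iii), which travels at the strictly smaller speed $\beta-c_0-\delta$ and satisfies $V_\delta\to V^*$ locally uniformly as $\delta\to 0$. Comparison with $V_\delta$ then gives $h(t+t_1+1)\le(\beta-c_0-\delta)t+C$, so $d(t)\ge \delta t-C'$ grows \emph{linearly}. This immediately feeds into the $Q_A$ comparison \eqref{left bound beta large} to yield exponential decay of $\|u\|_\infty$, and the explicit barrier of step~1 in Proposition~\ref{>=b^*-h_infty} then gives $h_\infty<\infty$; vanishing follows from Theorem~\ref{thm:convergence}. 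The $V_\delta$ perturbation is the missing idea---it converts a non-strict speed comparison into a strict one and produces the required rate for free.
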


\begin{proof}
Since $V^*(x-(\beta-c_0)t-x_1)$ is a solution of \eqref{p}$_1$ satisfying Stefan
free boundary condition at $x=k(t):=(\beta-c_0)t+x_1$:
$$k'(t)=\beta-c_0=-\mu(V^*)'(0)$$
by Lemma \ref{lem:tadpole tw bata<beta*} (ii). By the comparison principle we have
$$h(t_1+1)<k(1),\ u(t_1+1,x)<V^*(x-\beta+c_0-x_1)\mbox{ for }x\in[g(t_1+1),h(t_1+1)].$$
By Lemma \ref{lem:tadpole tw bata<beta*} (iii),
$V_\delta(z):=V(z;\beta-c_0-\delta,-c_0-\delta)\to V^*(z)$
locally uniformly in $(-\infty,0]$ as $\delta\to0$. Hence for sufficiently small
$\delta>0$, we have
$$u(t_1+1,x)<V_\delta(x-\beta+c_0-x_1)\mbox{ for }x\in[g(t_1+1),h(t_1+1)].$$
Since $V_\delta(x-(\beta-c_0-\delta)t-\beta+c_0-x_1)$ is a solution of \eqref{p}$_1$
satisfying Stefan free boundary condition at $x=k_\delta(t):=(\beta-c_0-\delta)t+\beta-c_0+x_1$,
by comparison we have
$$h(t+t_1+1)\leqslant  k_\delta(t)=(\beta-c_0-\delta)t+\beta-c_0+x_1.$$
A similar argument as in step 1 of the proof of Proposition \ref{>=b^*-h_infty} shows
that $h_\infty<\infty$, this implies that vanishing happens by Theorem \ref{thm:convergence}.
\end{proof}

\subsubsection{A necessary and sufficient condition for virtual spreading}

\begin{lem}\label{lem:condition for virtual spreading}
Assume $c_0\leqslant\beta<\beta^*$. Let $(u,g,h)$ be a solution of \eqref{p}. Then
virtual spreading happens if and only if, for any $\delta\in(0,c^*(\beta)-\beta+c_0)$,
there exist $t_1$ and $x_1$ such that
\begin{equation}\label{condition for v s}
u(t_1,x)\geqslant  W_\delta(x-x_1)\mbox{ for }x\in[x_1-L_\delta,x_1],
\end{equation}
where $W_\delta(z)$, $L_\delta:=L(\beta-c_0+\delta,-c_0+\delta)$ are the notation in Lemma \ref{lem:compact tw}.
\end{lem}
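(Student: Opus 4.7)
The proof splits into the two directions of the equivalence.

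For sufficiency, assume that for every $\delta\in(0,c^*(\beta)-\beta+c_0)$ there exist $t_1,x_1$ with $u(t_1,\cdot)\geqslant W_\delta(\cdot-x_1)$ on $[x_1-L_\delta,x_1]$. The plan is to use $\underline{u}(t,x):=W_\delta(x-(\beta-c_0+\delta)(t-t_1)-x_1)$, extended by zero outside its support, as a compactly supported sub-solution. Its right endpoint moves at exactly the Stefan-prescribed speed $\beta-c_0+\delta=-\mu W_\delta'(0)$ by Lemma \ref{lem:compact tw}. Since $g(t)\to g_\infty>-\infty$ by Proposition \ref{prop:g_infty>-infty} while the wave's left endpoint drifts to $+\infty$, for $t\geqslant t_1$ the wave's support eventually lies strictly inside $(g(t),h(t))$, so the parabolic maximum principle applied to $u-\underline{u}$ yields $u\geqslant\underline{u}$ on that support. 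A Hopf lemma argument at a hypothetical first time $T^*$ when $h(T^*)$ equals the wave's right endpoint gives $h'(T^*)>-\mu W_\delta'(0)=\beta-c_0+\delta$, contradicting the catch-up. Hence $h(t)\geqslant(\beta-c_0+\delta)(t-t_1)+x_1$ for all $t\geqslant t_1$, so $h_\infty=\infty$. Combined with $g_\infty>-\infty$ and the locally uniform decay $u\to 0$ in $I_\infty$ from Theorem \ref{thm:convergence}, this establishes three of the four ingredients of virtual spreading.

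For the remaining ingredient $u(t,\cdot+ct)\to 1$ locally uniformly for some $c>0$, I would take $\delta$ close to $c^*(\beta)-\beta+c_0$ so that $D_\delta\to 1$ and $L_\delta\to\infty$ (Lemma \ref{lem:compact tw}). The lower bound $u\geqslant\underline{u}$ then places a value close to $1$ on a very long interval moving rightward at speed $\beta-c_0+\delta$. Passing to the moving frame $y=x-ct$ with any fixed $c\in(\beta-c_0,\beta-c_0+\delta)$ and comparing with a smaller compactly supported bump whose corresponding Cauchy problem for $v_t=v_{yy}+(c-\beta)v_y+f(v)$ converges to $1$ by the hair-trigger effect of \cite{AW}, I conclude $u(t,y+ct)\to 1$ locally uniformly in $y$. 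Letting $\delta\uparrow c^*(\beta)-\beta+c_0$ extends this to every $c\in(\beta-c_0,c^*(\beta))$, which is strictly stronger than what the definition requires.

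For necessity, assume virtual spreading, so $u(t,\cdot+c_0t)\to 1$ locally uniformly in $\R$ for some $c_0>0$. Fix $\delta\in(0,c^*(\beta)-\beta+c_0)$ and recall $\max W_\delta=D_\delta<1$ with $W_\delta$ supported on $[-L_\delta,0]$. Because $u(t,c_0t+z)\to 1$ uniformly in $z\in[-L_\delta,0]$, for all sufficiently large $t_1$ we have $u(t_1,c_0t_1+z)\geqslant W_\delta(z)$ on $[-L_\delta,0]$. Setting $x_1:=c_0t_1$ gives the desired inequality on $[x_1-L_\delta,x_1]$.

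The main obstacle is the comparison step in the sufficiency argument: neither Lemma \ref{lem:comp1} nor Lemma \ref{lem:comp2} applies verbatim to $\underline{u}$, because both endpoints of $W_\delta$ drift to the right at the same positive speed, whereas a genuine solution of \eqref{p} has $g(t)$ moving to the left. Proposition \ref{prop:g_infty>-infty} is exactly what saves us, by keeping $g(t)$ bounded below so that the wave's support eventually sits in the interior of $(g(t),h(t))$ and classical parabolic comparison is available on that shrinking region; the Hopf lemma at the right endpoint then rules out overtaking of $h(t)$. A secondary difficulty is converting the pointwise lower bound $u\geqslant W_\delta$ (with $D_\delta<1$) into genuine convergence to $1$, and this is resolved by the $\delta\uparrow c^*(\beta)-\beta+c_0$ limit together with the resulting $D_\delta\to 1$ and $L_\delta\to\infty$.
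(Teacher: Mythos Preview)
Your necessity argument is correct and matches the paper's one-line observation.

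The sufficiency argument has a genuine gap in the step where you conclude $u(t,\cdot+ct)\to 1$. You propose to compare $w(t,y):=u(t,y+ct)$ with the solution $v$ of the Cauchy problem $v_t=v_{yy}-(\beta-c)v_y+f(v)$ starting from a small compactly supported bump, invoking the hair-trigger effect of \cite{AW} to get $v\to 1$ and hence $w\to 1$. But $w$ is not defined on all of $\R$: it lives on $(G(t),H(t))$ with $w=0$ on both lateral boundaries, whereas $v(t,\cdot)>0$ everywhere for $t>0$. Thus $v\leqslant w$ fails at $y=G(t)$ and $y=H(t)$, and the parabolic comparison principle does not deliver $w\geqslant v$. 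The facts that $G(t)\to-\infty$ and $H(t)\to\infty$ do not repair this, because $v$ is strictly positive at every finite point at every finite time. Your fallback, sending $\delta\uparrow c^*(\beta)-\beta+c_0$ so that $D_\delta\to 1$ and $L_\delta\to\infty$, does not close the gap either: in a frame of speed $c<\beta-c_0+\delta$ the compactly supported wave $W_\delta$ drifts to the right and eventually leaves any fixed compact set, so the lower bound $w\geqslant W_\delta$ gives information at a fixed $y$ only on a bounded time interval.

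The paper handles this by a different choice of moving frame. It passes to speed $\beta-c_0+\delta$, the \emph{same} speed as $W_\delta$, so that $W_\delta$ becomes a stationary lower barrier for $w$. In that frame the equation has advection coefficient $c_0-\delta<c_0$. After showing $H(t)\to\infty$ (by perturbing the single given $\delta$ to $\delta+\epsilon$ via the strong maximum principle and continuity, rather than by invoking the hypothesis for a second value of $\delta$), the general $\omega$-limit convergence result of Theorem \ref{thm:convergence} (cf.\ \cite{DuLou}) forces $w(t,\cdot)$ to converge locally uniformly to a nonnegative stationary solution of $v_{xx}-(c_0-\delta)v_x+f(v)=0$ on $\R$. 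Since $c_0-\delta<c_0$, the only such solutions are $0$ and $1$, and the permanent lower bound $w\geqslant W_\delta$ rules out $0$. This is the missing idea: work in the frame where the sub-solution is stationary and invoke the convergence-to-stationary-solutions theorem, not the Cauchy-problem hair-trigger effect.

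A minor further point: even if your argument were completed, it would consume the hypothesis for $\delta$ arbitrarily close to $c^*(\beta)-\beta+c_0$. The paper's proof uses \eqref{condition for v s} for a single $\delta$ and then perturbs, so it in fact establishes that the condition for one $\delta$ already suffices.
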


\begin{proof}
The inequality \eqref{condition for v s} follows from the definition of virtual spreading
immediately. We only need to show that \eqref{condition for v s} is a sufficient condition for
virtual spreading.

Since $W_\delta(x-(\beta-c_0+\delta)t-x_1)$ satisfies \eqref{p}$_1$ and Stefan free
boundary condition at $x=r(t):=(\beta-c_0+\delta)t+x_1$. Comparing $u$ and $W_\delta$ we have
\begin{equation}\label{u(t+t2,x)>W_delta}
  u(t+t_1,x)>W_\delta(x-(\beta-c_0+\delta)t-x_1)\mbox{ for }x\in[r(t)-L_\delta,r(t)].
\end{equation}
In particular, this is true at $t=1$. Since $W_\delta(z)$ depends on $\delta$ continuously,
we have
$$u(t_1+1,x)>W_{\delta+\epsilon}(x-(\beta-c_0+\delta+\epsilon)-x_1)
\mbox{ for }x\in[r(1)+\epsilon-L_{\delta+\epsilon},r(1)+\epsilon],$$
for any $\epsilon\in(0,\epsilon_0]$ provided $\epsilon_0>0$ is small. Using the comparison principle
again between $u(t+t_1+1,x)$ and $W_{\delta+\epsilon}(x-(\beta-c_0+\delta+\epsilon)(t+1)-x_1)$,
we have
$$h(t+t_1+1)\geqslant (\beta-c_0+\delta+\epsilon)(t+1)+x_1,\ t>0.$$
This implies that
\begin{equation}\label{H}
  H(t):=h(t+t_1+1)-(\beta-c_0+\delta)t\geqslant \epsilon(t+1)+x_1\to\infty\mbox{ as }t\to\infty.
\end{equation}
Set
\begin{equation}\label{G}
  G(t):=g(t+t_1+1)-(\beta-c_0+\delta)t
\end{equation}
and
\begin{equation}\label{w-G-H}
  w(t,x):=u(t+t_1+1,x+(\beta-c_0+\delta)t)\mbox{ for }G(t)\leqslant  x\leqslant  H(t),\ t\geqslant 0.
\end{equation}
Then $G(t)\to-\infty$ as $t\to\infty$ by Proposition \ref{prop:g_infty>-infty}, $w$
satisfies
\begin{equation}\label{w(t,x)>W_delta}
  w(t,x)>W_\delta(x-(\beta-c_0+\delta)-x_1)\mbox{ for }x\in[x_1+\beta-c_0+\delta-L_\delta,x_1+\beta-c_0+\delta],\ t\geqslant0
\end{equation}
by \eqref{u(t+t2,x)>W_delta} and
\begin{equation}\label{shift-w}
\left\{
\begin{array}{ll}
 w_t=w_{xx}-(c_0-\delta)w_{x}+f(w),\quad t>0,\ G(t)< x<H(t),\\
 w(t,G(t))=0,\ \ G'(t)=-\mu w_x(t, G(t))-(\beta-c_0+\delta),\quad t>0,\\
 w(t,H(t))=0,\ \ H'(t)=-\mu w_x (t, H(t))-(\beta-c_0+\delta) ,\quad t>0,\\
G(0)=g(t_1+1),\ H(0)= h(t_1+1),\ \ w(0,x) =u(t_1+1,x) \mbox{ for } G(0)\leqslant  x \leqslant  H(0).
\end{array}
\right.
\end{equation}
In a similar way as proving Theorem \ref{thm:convergence} (cf. the proof of
\cite[Theorem 1.1]{DuLou}), one can show that $w(t,\cdot)$ converges to a stationary
solution of \eqref{shift-w}$_1$ locally uniformly in $\R$. By \eqref{w(t,x)>W_delta}, such a stationary
solution must be 1. This means spreading happens for
$w$ and so virtual spreading happens for $u$.
This proves the lemma.
\end{proof}

\subsubsection{The limits of $h$ and $u$ when vanishing and virtual spreading do not happen}
In this part we always assume $c_0\leqslant  \beta<\beta^*$ and $\sigma_*\in (0,\infty)$
for given $\phi\in \mathscr{X}(h_0)$, where $\sigma_*$ is defined by \eqref{def E0}. We consider the limits of $h(t;\sigma\phi)$,
$h'(t;\sigma\phi)$ and $u(t,\cdot+h(t);\sigma\phi)$ when vanishing does not happen, that is, when
$\sigma\geqslant\sigma_*$.

\begin{lem}\label{lem:h(t)-(beta-c0)t=+infty}
Assume $c_0\leqslant \beta<\beta^*$. If vanishing does not happen for a solution $u$ of
\eqref{p}, then
\begin{equation}\label{h(t)-(beta-c0)t=+infty}
\lim\limits_{t\to\infty}[h(t)-(\beta-c_0)t]=+\infty.
\end{equation}
\end{lem}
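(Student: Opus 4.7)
The plan is a proof by contradiction that recycles the upper-solution template from Step 2 of the proof of Proposition \ref{>=b^*-h_infty}. First note that by Theorem \ref{thm:sigma_*} the hypothesis ``vanishing does not happen'' forces $h_\infty = +\infty$, so in the case $\beta = c_0$ the assertion $\lim[h(t) - (\beta - c_0)t] = \lim h(t) = +\infty$ is immediate, and the real work is for $c_0 < \beta < \beta^*$. Assume, toward a contradiction, that $\liminf_{t \to \infty}[h(t) - (\beta - c_0)t] \leqslant L$ for some finite $L$, and pick $t_n$ large with $h(t_n) \leqslant (\beta - c_0) t_n + L + 1$.

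The key input is the Bramson-type decay estimate \eqref{left 1}: whenever the bootstrap bound $h(s) \leqslant (\beta - c_0)s + L + 2$ holds on an interval $[t_n, t_n + T]$, one obtains $u(s, x) \leqslant C (s + 1)^{-5/4}$ uniformly for $x \in [h(s) - \pi/2, h(s)]$ with $C$ depending on $L, u_0, c_0, t_0$ but independent of $T$, exactly as in the derivation of Corollary \ref{cor:-h0 h(t)}(ii). With this estimate in hand we introduce the sine-type upper solution
\begin{equation*}
w(t, x) = \epsilon (t + t_n)^{-5/4} e^{(\beta/2)(x - k(t))} \cos\bigl(x - k(t) + \tfrac{\pi}{2}\bigr), \quad k(t) - \tfrac{\pi}{2} \leqslant x \leqslant k(t),
\end{equation*}
with $k(t) = h(t_n) + \pi/2 + 4\mu\epsilon[t_n^{-1/4} - (t + t_n)^{-1/4}]$ and $\epsilon = C e^{\beta \pi /4}$, mirroring the construction in Step 2 of Proposition \ref{>=b^*-h_infty}. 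The PDE inequality $w_t \geqslant w_{xx} - \beta w_x + f(w)$, the Stefan identity $k'(t) = -\mu w_x(t, k(t))$, and the inner-boundary bound $w(t, k(t) - \pi/2) = C (t + t_n)^{-5/4} \geqslant u(t + t_n, k(t) - \pi/2)$ all follow from the direct calculation performed there combined with the decay estimate above.

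A $T^*$-continuity argument together with Lemma \ref{lem:comp2} (with the trivial inequality $h(t + t_n) \leqslant k(t)$ on times when $k(t) - \pi/2 \geqslant h(t + t_n)$) then yields $h(s) \leqslant k(s - t_n) \leqslant h(t_n) + \pi/2 + 4\mu\epsilon t_n^{-1/4} < +\infty$ for every $s \geqslant t_n$, contradicting $h_\infty = +\infty$. The principal difficulty is closing the bootstrap: the decay estimate controlling $u$ at the inner boundary is itself contingent on a forward bound for $h$, so one must argue on the maximal interval $[0, T^*]$ on which $h(s + t_n) \leqslant k(s)$ holds and verify via the Hopf lemma at $s = T^*$ that $h$ cannot cross $k$, forcing $T^* = +\infty$; matching the $(t + t_n)^{-5/4}$ decay rate built into $w$ to the rate furnished by \eqref{left 1} is essential for this to close.
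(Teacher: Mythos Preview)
Your route is genuinely different from the paper's.  The paper does \emph{not} go straight to the sine--type upper solution; instead it compares $u$ with the tadpole traveling wave $V^*(x-(\beta-c_0)t-M)$ and tracks intersection numbers via Lemma \ref{lem:zeros between u and Psi}.  A first comparison with $V^*(\cdot+h_0)$ yields the lower bound $h(t)>(\beta-c_0)t-h_0$ for all $t$; then for each $M$ a second comparison with $V^*(\cdot-M)$ is run through a case analysis, and only in the sub-case ``$h(t)<(\beta-c_0)t+M$ for all $t$'' (so that $h(t)=(\beta-c_0)t+O(1)$ two-sidedly) does the paper invoke the upper-solution construction from Step~2 of Proposition~\ref{>=b^*-h_infty}.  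The other sub-cases are eliminated because they would force $u\leqslant V^*$ at some time, whence vanishing by Lemma~\ref{lem:condition 2 for vanishing}.  So the paper's argument is primarily an intersection-number argument, with the upper-solution barrier appearing only once the two-sided estimate $h(t)=(\beta-c_0)t+O(1)$ is already in hand.

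Your direct bootstrap avoids $V^*$ and zero-number lemmas entirely, which is appealing, but there is a real gap at the decay step.  You assert that the upper bound $h(s)\leqslant(\beta-c_0)s+L+2$ alone yields $u(s,x)\leqslant C(s+1)^{-5/4}$ on $[h(s)-\pi/2,h(s)]$ ``exactly as in the derivation of Corollary~\ref{cor:-h0 h(t)}(ii)''.  That derivation, however, needs the \emph{two-sided} bound $h(t)=(\beta-c_0)t+O(1)$: it is what guarantees $Y(t)-x=\frac{3}{c_0}\ln(1+t/t_0)+O(1)\in[0,\sqrt{t+1}]$, placing $x$ in the window where \eqref{left 1} applies.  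With only an upper bound on $h$, the quantity $Y(s)-h(s)$ has no upper control, and on your bootstrap interval (where $h$ is trapped by a \emph{constant} $k$ while $(\beta-c_0)s\to\infty$) one in fact has $Y(s)-h(s)\to+\infty$ linearly, so $[h(s)-\pi/2,h(s)]$ eventually falls out of the window for \eqref{left 1}.  This is exactly the lower bound the paper secures via its first $V^*$ comparison.

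The gap is repairable within your framework: once $h(s)$ lags far behind $(\beta-c_0)s$, the cruder bound \eqref{left bound beta large} gives $u(s,x)\leqslant Q_A(x-(\beta-c_0)s+x_0)$, and since $Q_A(z)\sim -Cze^{c_0z/2}$ as $z\to-\infty$ this furnishes \emph{exponential} decay in $s$, more than enough to dominate $C(s+t_n)^{-5/4}$.  So you must split the verification of the inner boundary inequality into two regimes (small $t$ handled by \eqref{left 1}, large $t$ by \eqref{left bound beta large}) rather than cite Corollary~\ref{cor:-h0 h(t)}(ii) wholesale.  With that patch your argument closes and gives a somewhat more self-contained proof than the paper's, at the cost of this extra case split.
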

\begin{proof}
When $\beta=c_0$, we have $g_\infty>-\infty$ by Proposition \ref{prop:g_infty>-infty}.
If $h_\infty<\infty$, then vanishing happens for $u$ by Theorem \ref{thm:convergence},
contradicts our assumption. Therefore, \eqref{h(t)-(beta-c0)t=+infty} holds when $\beta=c_0$.

We now consider the case $c_0<\beta<\beta^*$. First we prove that
$$h(t)>(\beta-c_0)t-h_0\mbox{ for any }t>0.$$
Set
$$\eta_1(t,x):=u(t,x)-V^*(x-(\beta-c_0)t+h_0)\mbox{ for }x\in J_1 (t),\ t>0,$$
where
$$
J_1(t):=[g(t),\min\{h(t),(\beta-c_0)t-h_0\}]\mbox{ for }t>0.
$$
It is easily seen that, for $0<t\ll1$,
\begin{equation}\label{Z_J(t)[eta1]}
  (\beta-c_0)t-h_0<h(t)\mbox{ and }\mathcal{Z}_{J_1(t)}[\eta_1(t,\cdot)]=1.
\end{equation}
We claim that this is true for all $t>0$. Otherwise, there exists $T_1>0$ such that
$(\beta-c_0)t-h_0<h(t)$ for $0<t<T_1$ and $(\beta-c_0)T_1-h_0=h(T_1)$. By Lemma
\ref{lem:zeros between u and Psi} we have $\mathcal{Z}_{J_1(t)}[\eta_1(t,\cdot)]=1$ for
$0<t<T_1$ and $\mathcal{Z}_{J_1(t)}[\eta_1(t,\cdot)]=0$ for $t>T_1$. Therefore,
$$u(t,x)<V^*(x-(\beta-c_0)t+h_0)\mbox{ for }x\in I(t),\ T_1<t\ll T_1+1.$$
This implies that vanishing happens for $u$ by Lemma \ref{lem:condition 2 for vanishing},
contradicts our assumption.

Next we prove that, for any large $M>0$, $h(t)>(\beta-c_0)t+M$ when $t$ is large.
Without loss of generality we assume
$$u_0'(-h_0)>0,\ u_0'(h_0)<0\mbox{ and }u_0(x)>0\mbox{ for }x\in(-h_0,h_0).$$
(Otherwise one can replace $u_0(x)$ by $u(1,x)$ to proceed the following analysis.)
So there exists $X>h_0$ large such that $u_0(x)$ intersects $V^*(x-M)$ at exactly
two points for any $M\geqslant X$. Set
$$\eta_2(t,x):=u(t,x)-V^*(x-(\beta-c_0)t-M)\mbox{ for }x\in J_2(t),\ t>0,$$
where
$$J_2(t):=[g(t),\min\{h(t),(\beta-c_0)t+M\}].$$
Then $\mathcal{Z}_{J_2(t)}[\eta_2(t,\cdot)]=2$ for $0<t\ll1$. Denote by $\xi_1(t)$
and $\xi_2(t)$ with $\xi_1(t)<\xi_2(t)$ the two zeros of $\eta_2(t,\cdot)$. Then we
have the following situations about the relations among $\xi_1(t)$, $\xi_2(t)$, $h(t)$
and $(\beta -c_0)t +M $.

\medskip

\emph{Case 1}. $h(t)<(\beta-c_0)t+M$ for all $t>0$. In this case, combining with
\eqref{Z_J(t)[eta1]} we have $h(t)=(\beta-c_0)t+O(1)$. Using a similar argument as
in step 2 of the proof of Proposition \ref{>=b^*-h_infty} we can derive $h_\infty<\infty$.
This implies that vanishing happens for $u$, contradicts our assumption.

\medskip

\emph{Case 2}. There exists $T_2>0$ such that $h(t)<(\beta-c_0)t+M$ for $0<t<T_2$
and $h(T_2)=(\beta-c_0)T_2+M$. This includes several subcases.

\medskip

\emph{Subcase 2-1}. $\xi_1(t)$ meets $\xi_2(t)$ at time $t=T_3<T_2$. In this case,
$\xi_1(T_3)=\xi_2(T_3)$ is a degenerate zero of $\eta_2(T_3,\cdot)$ and so
$\mathcal{Z}_{I(t)}[\eta_2(t,\cdot)]=0$ for $T_3<t\ll T_3+1$. This indicates that
\begin{equation}\label{u(t+1,x)<V*}
  u(t,x)<V^*(x-(\beta-c_0)t-M),\quad x\in[g(t),h(t)],\ T_3<t\ll T_3+1,
\end{equation}
and so vanishing happens by Lemma \ref{lem:condition 2 for vanishing}, contradicts
our assumption.

\medskip

\emph{Subcase 2-2}. $\xi_1(t)<\xi_2(t)<h(t)$ for $0<t\leqslant T_2$. This means a new
intersection point $(h(T_2),0)$ between $u$ and $V^*$ emerges on the boundary. This is
impossible by Lemma \ref{lem:zeros between u and Psi}.

\medskip

\emph{Subcase 2-3}. $\xi_1(t)<\xi_2(t)<h(t)$ for $0<t<T_2$ and $\xi_1(T_2)=\xi_2(T_2)=h(T_2)$.
This means the two intersection points between $u$ and $V^*$ move rightward to $(h(t),0)$
at time $T_2$. By Lemma \ref{lem:zeros between u and Psi}, this is the unique zero of
$\eta_2(T_2,\cdot)$ and it will disappear after time $T_2$. Hence \eqref{u(t+1,x)<V*} holds for
$t>T_2$. Then vanishing happens, a contradiction.

\medskip

\emph{Subcase 2-4}. $\xi_1(t)<\xi_2(t)<h(t)$ for $0<t<T_2$ and
$\xi_1(T_2)<\xi_2(T_2)=h(T_2)=(\beta-c_0)T_2+M$. By Lemma \ref{lem:zeros between u and Psi},
$\mathcal{Z}_{J_2(t)}[\eta_2(t,\cdot)]=1<2$ for $T_2<t\ll T_2+1$, where
$J_2(t):=[g(t),(\beta-c_0)t+M]$. Using the maximum principle for $\eta_2(t,x)$ in the domain
$$\Omega:=\{(t,x)\mid\xi_1(t)<x<\xi_2(t),\ 0<t\leqslant T_2\}$$
and using Hopf lemma at $(t,x)=(T_2,h(T_2))=(T_2,\xi_2(T_2))$ we have $(\eta_2)_x(T_2,h(T_2))<0$,
that is,
\begin{equation}\label{u_x(T2,h(T2))<(V*)'(0)}
  u_x(T_2,h(T_2))<(V^*)'(0)=-\frac{\beta-c_0}{\mu},
\end{equation}
and so
\begin{equation}\label{h'(T2)>beta-c0}
 h'(T_2)=-\mu u_x(T_2,h(T_2))>\beta-c_0.
\end{equation}
We claim that
\begin{equation}\label{(beta-c0)t+M<h(t)}
  (\beta-c_0)t+M<h(t)\mbox{ for all }t>T_2
\end{equation}
and so $\mathcal{Z}_{J_2(t)}[\eta_2(t,\cdot)]=1$ for all $t>T_2$. Indeed, if
$(\beta-c_0)t+M$ catches up $h(t)$ again at time $t=T_4$, then the unique intersection
point $(\xi_1(t),u(t,\xi_1(t)))$ (for $t\in[T_2,T_4)$) moves to $(h(t),0)$ at time
$T_4$ and then it disappear after time $T_4$ by Lemma \ref{lem:zeros between u and Psi}.
This implies that \eqref{u(t+1,x)<V*} holds for $t>T_4$ and so vanishing happens,
a contradiction. \eqref{(beta-c0)t+M<h(t)} is true for any $M>0$ and so
\eqref{h(t)-(beta-c0)t=+infty} holds.
\end{proof}

\begin{lem}\label{lem:h'(t)=beta-c0}
Assume $c_0\leqslant \beta<\beta^*$. If vanishing and virtual spreading do not happen for
the solution $u$ of \eqref{p}, then
\begin{equation}\label{h'(t)=beta-c0}
 \lim\limits_{t\to\infty}h'(t)=\beta-c_0.
\end{equation}
\end{lem}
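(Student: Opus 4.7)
I would prove $\lim_{t\to\infty} h'(t)=\beta-c_0$ by establishing $\limsup h'(t)\leqslant \beta-c_0$ and $\liminf h'(t)\geqslant \beta-c_0$ separately, each by contradiction. The tool on the upper side is the compactly supported traveling wave $W_\delta$ together with the necessary-and-sufficient criterion for virtual spreading (Lemma \ref{lem:condition for virtual spreading}); the tool on the lower side is the tadpole-like traveling wave $V^*$ together with the sufficient condition for vanishing (Lemma \ref{lem:condition 2 for vanishing}). In both directions the mechanism is the same: control the boundary slope $h'(t_n)=-\mu u_x(t_n,h(t_n))$, pass to a subsequential limit in a moving frame $z=x-h(t_n)$, identify the limit profile in the phase plane of Section \ref{subsec:phase plane}, and then slide an appropriate semi-wave or tadpole so that the hypothesis of Lemma \ref{lem:condition for virtual spreading} or Lemma \ref{lem:condition 2 for vanishing} is forced to hold.

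\textbf{Upper bound.} Assume for contradiction that $h'(t_n)\geqslant \beta-c_0+3\eta$ along some sequence $t_n\to\infty$, with $\eta>0$ chosen so small that $\eta\in(0,c^*(\beta)-\beta+c_0)$. Standard parabolic regularity for the free boundary problem \eqref{p} gives uniform H\"older bounds on $h'$, so $h'(t)\geqslant \beta-c_0+2\eta$ on intervals $[t_n,t_n+\tau]$ of a fixed length $\tau>0$. By parabolic compactness applied to the rescaled functions $u(t_n+s,h(t_n)+z)$ (with $g_\infty>-\infty$ and Lemma \ref{lem:h(t)-(beta-c0)t=+infty} giving an arbitrarily long left-room), I extract a limit profile $q^\infty(z)$ on $(-\infty,0]$ solving \eqref{eq tw} with $c=\beta-c_0+2\eta$, $q^\infty(0)=0$, and $-\mu (q^\infty)'(0)\geqslant\beta-c_0+2\eta$. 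Comparing $q^\infty$ with $W_\eta$ (which has speed $\beta-c_0+\eta$ and smaller boundary slope $-(\beta-c_0+\eta)/\mu$) via the phase-plane trajectories in Section \ref{subsec:phase plane} shows $q^\infty(z)\geqslant W_\eta(z)$ on $[-L_\eta,0]$, so for $n$ large the hypothesis \eqref{condition for v s} of Lemma \ref{lem:condition for virtual spreading} holds with $x_1=h(t_n)$. That lemma produces virtual spreading, contradicting our standing assumption.

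\textbf{Lower bound.} Assume for contradiction $h'(t_n)\leqslant\beta-c_0-3\eta$ along some $t_n\to\infty$. Again extract a limit profile $q^\infty(z)$ on $(-\infty,0]$ with $q^\infty(0)=0$ and $-\mu(q^\infty)'(0)\leqslant\beta-c_0-3\eta$; the resulting ODE is $q''-c_0q'+f(q)=0$, whose bounded non-negative solutions with right endpoint at $0$ are classified in Section \ref{subsec:phase plane} as the tadpole family $V(z;b,-c_0)$. Using Lemma \ref{lem:beta>c0 u to 0} and the classification, the slope inequality $b\leqslant \beta-c_0-3\eta<\beta-c_0$ together with phase-plane monotonicity in $b$ forces $q^\infty(z)\leqslant V^*(z)$ on $(-\infty,0]$. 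Applying the zero-number Lemma \ref{lem:zeros between u and Psi} to $\eta(t,x)=u(t,x)-V^*(x-h(t_n))$ (perturbed by a small rightward shift if necessary to make the top of the free boundary strict), one gets $u(t_n,x)\leqslant V^*(x-x_1)$ on $[g(t_n),h(t_n)]$ for some $x_1\geqslant h(t_n)$, hence vanishing by Lemma \ref{lem:condition 2 for vanishing} — again a contradiction.

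\textbf{Expected main obstacle.} The delicate step in both halves of the argument is promoting a pointwise boundary-slope bound into a genuine pointwise dominance of the whole profile by $W_\eta$ or $V^*$, because the limit profile $q^\infty$ is only known a priori to match the comparison wave at $z=0$ to first order. Justifying the dominance requires combining the phase-plane monotonicity $P'(\gamma)<0$ on trajectories from Section \ref{subsec:phase plane}, a Hopf boundary-point analysis near $x=h(t_n)$, and the intersection-number machinery of Lemma \ref{lem:zeros between u and Psi} to preclude unwanted crossings between $u(t_n,\cdot)$ and the comparison wave away from the free boundary. Carrying out this comparison carefully, without losing the sharp slope inequality in the limit, is the technical heart of the proof.
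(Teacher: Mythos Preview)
Your route is genuinely different from the paper's, and as written it has a real gap.

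\textbf{How the paper argues.} The paper never passes to a limit profile. It works entirely at finite $t$ with a sliding/intersection-number argument. For the lower bound it places $V^*(x-(\beta-c_0)t-X_a)$ for a continuous family of shifts $X_a$; by Lemma \ref{lem:h(t)-(beta-c0)t=+infty} the free boundary $h(t)$ catches the right endpoint at some time $T_a$, and (repeating the subcase~2--4 analysis from the proof of that lemma) the zero of $u-V^*$ remaining on the left forces, via Hopf at $(T_a,h(T_a))$, $h'(T_a)>\beta-c_0$. Since $X_a$ varies continuously this gives $h'(t)>\beta-c_0$ for all large $t$. For the upper bound the paper first shows $h(t)-(\beta-c_0+\delta)t\to-\infty$ (by contradiction: if $h$ caught the right endpoint of a $W_{\delta_1}$ then $u\geqslant W_{\delta_1}$ would result, hence virtual spreading via Lemma \ref{lem:condition for virtual spreading}); then it slides $W_\delta$ so that \emph{its} right endpoint catches $h(t)$, and at that contact Hopf gives $h'<\beta-c_0+\delta$.

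\textbf{The gap in your plan.} Your compactness step asserts that the subsequential limit $q^\infty(z)$ of $u(t_n,\cdot+h(t_n))$ solves the autonomous ODE \eqref{eq tw} with a definite speed~$c$. This is not justified: in the moving frame the equation is $w_t=w_{zz}+(h'(t)-\beta)w_z+f(w)$, and a bound $h'(t)\geqslant\beta-c_0+2\eta$ on $[t_n,t_n+\tau]$ does not make the coefficient converge, so the limit is at best a solution of a non-autonomous parabolic problem, not a trajectory in a single phase plane. Even if you pass to a further subsequence along which $h'(t_n)\to c_\infty$, you would still need $h'(t_n+s)\to c_\infty$ uniformly in $s$ to get an autonomous limit --- and that is exactly the conclusion you are trying to prove. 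Relatedly, in your lower-bound sketch you write the limit ODE as $q''-c_0q'+f(q)=0$, i.e.\ speed $\beta-c_0$; but under the hypothesis $h'(t_n)\leqslant\beta-c_0-3\eta$ the limiting speed, if it existed, would be $\leqslant\beta-c_0-3\eta$, so the ODE is misidentified. Finally, your key comparison $q^\infty\geqslant W_\eta$ in the upper bound asks you to compare profiles solving \eqref{eq tw} with \emph{different} $c$; the monotonicity $P'(\gamma)<0$ concerns the boundary slope of a single trajectory in a fixed phase plane and does not by itself order whole profiles across different phase planes. These issues are not cosmetic: they are precisely why the paper avoids compactness and instead uses the direct sliding argument with Lemma \ref{lem:zeros between u and Psi} and Hopf at the contact time.
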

\begin{proof}
We divide the proof into several steps.

\medskip

\emph{Step 1}. We first prove $h'(t)>\beta-c_0$ for all large $t$. This is clear
when $\beta=c_0$. We now assume $c_0<\beta<\beta^*$.

For readers' convenience, we first sketch the idea of our proof. We put a tadpole-like traveling wave $V^*(x-(\beta -c_0)t-C)$
whose right endpoint $r(t):= (\beta -c_0)t+C$ lies right to $h_0$. As $t$ increasing,
both $h(t)$ and $r(t)$ move rightward, but $h(t)$ moves faster by Lemma \ref{lem:h(t)-(beta-c0)t=+infty}.
Hence $h(t)$ catches up $r(t)$ at some time $T$. We will show that at this moment $u>V^*$ near $x=h(T)$
and so $h'(T)\geqslant
\beta -c_0$ (in fact, strict inequality holds by Hopf lemma). Since the shift $C$ of $V^*$ can be chosen
continuously we indeed obtain $h'(t)>\beta -c_0$  for all large time $t$.

Now we give the details of the proof. As in the proof of the previous lemma,
there exists $X>h_0$ such that $u_0(x)$ intersects $V^*(x-M)$ at exactly two points for any
$M\geqslant X$.

By \eqref{h(t)-(beta-c0)t=+infty}, there exists $T_X>0$ such that $h(t)-(\beta-c_0)t>X$
for all $t\geqslant T_X$. For any $a>h(T_X)$ denote $T_a$ the unique time such that
$h(T_a)=a$. Set $X_a:=h(T_a)-(\beta-c_0)T_a\ (>X)$. We study the intersection points
between $u(t,\cdot)$ and $V^*(x-(\beta-c_0)t-X_a)$. As in the proof of the previous lemma,
only subcase 2-4 is possible: there exists $T^*>0$ such that
$$\xi_1(t)<\xi_2(t)<h(t)\mbox{ for }0<t<T^*\quad \mbox{and}\quad \xi_1(T^*)<\xi_2(T^*)=h(T^*)=(\beta-c_0)T^*+X_a,$$
and as proving \eqref{(beta-c0)t+M<h(t)} we have
$$
(\beta-c_0)t+X_a<h(t)\quad \mbox{for all }t>T^*.
$$
Therefore $T^*$ is nothing but $T_a$. By \eqref{h'(T2)>beta-c0} we have
$$h'(T_a)=-\mu u_x(T_a,h(T_a))=-\mu u_x(T_a,a)>\beta-c_0.$$
Since $a>h(T_X)$ is arbitrary, $T_a$ is continuous and strictly increasing in $a$,
we indeed have
$$h'(t)>\beta-c_0\quad \mbox{for all }t>T_X.$$

\medskip

\emph{Step 2}. We prove
\begin{equation}\label{h(t)-(beta-c_0+delta)t=-infty}
  \lim\limits_{t\to\infty}[h(t)-(\beta-c_0+\delta)t]=-\infty\mbox{ for all }\delta\in(0,c^*-\beta+c_0).
\end{equation}
For any $\delta\in(0,c^*-\beta+c_0)$, we choose $\delta_1\in(0,\delta)$
and consider the compactly supported traveling wave $W_{\delta_1}(x-c_1t-M)$,
where $c_1=\beta-c_0+\delta_1$, $M>0$ is a large real number such that $u_0(x)$
has no intersection point with $W_{\delta_1}(x-M)$. Clearly \eqref{h(t)-(beta-c_0+delta)t=-infty}
is proved if we have $h(t)<c_1t+M$ for all $t>0$. If, otherwise, there exists
some $T_1>0$ such that
$$h(t)<c_1t+M\mbox{ for }t\in[0,T_1),\ h(T_1)=c_1T_1+M,$$
then there exists $T_2\in(0,T_1)$ such that $h(t)$ catches up the left boundary
$l_1(t):=c_1t+M-L_{\delta_1}$ of the support of $W_\delta(x-c_1t-M)$ at time $T_2$
and never lags behind it again. So in the time interval $(T_2,T_1)$.
$$\mathcal{Z}_{J_1(t)}[\zeta_1(t,\cdot)]=1\mbox{ for }t\in[T_2,T_1],$$
where $J_1(t):=[l_1(t),h(t)]$ and
$$\zeta_1(t,x):=u(t,x)-W_{\delta_1}(x-c_1t-M)\mbox{ for } x\in J_1(t),\ t\in[T_2,T_1].$$
By Lemma \ref{lem:zeros between u and Psi}, the unique zero $\zeta_1(t,\cdot)$ moves to
$(h(t),0)$ at time $t=T_1$ and it disappears after $T_1$. Hence
$$u(T_1,x)\geqslant  W_{\delta_1}(x-c_1T_1-M)\mbox{ for }x\in[l_1(T_1),c_1T_1+M]=[l(T_1),h(T_1)].$$
This implies that virtual spreading happens for $u$ by Lemma \ref{lem:condition for virtual spreading},
contradicts our assumption.

\medskip

\emph{Step 3}. Based on Step 2 we prove
\begin{equation}\label{h'(t)<beta-c_0+delta}
  h'(t)<\beta-c_0+\delta\mbox{ for large }t,
\end{equation}
for any $\delta\in(0,c^*-\beta+c_0)$. Fix such a $\delta$, we
consider $u(t,x)$ and $W_\delta(x-(\beta-c_0+\delta)t+h_0)$. It is easily seen
that these two functions intersect at exactly one point in their common domain
$J_2(t):=[g(t),r(t)]$ for small $t>0$, where $r(t):=(\beta-c_0+\delta)t-h_0$. By
Step 2, there exists $T_3>0$ such that
$$r(t)<h(t)\mbox{ for }t\in[0,T_3),\ r(T_3)=h(T_3).$$
If the left boundary $l_2(t):=r(t)-L_\delta$ of the support of $W_\delta(x-r(t))$
lags behind $g(t)$ till $t=T_3$: $l_2(t)<g(t)$ for $t\in[0,T_3)$, then
$$u(T_3,x)\leqslant  W_\delta(x-r(T_3))\mbox{ for }x\in[g(T_3),h(T_3)].$$
Using Hopf lemma at $h(T_3)$ we have
\begin{equation}\label{h'(T3)<beta-c0+delta}
  h'(T_3)=-\mu u_x(T_3,h(T_3))<-\mu W_\delta'(0)=\beta-c_0+\delta.
\end{equation}
If there exists $T_4\in(0,T_3)$ such that
$$l_2(t)<g(t)\mbox{ for }t\in[0,T_4),\ l_2(T_4)=g(T_4).$$
Then either $W_\delta(x-r(T_4))\leqslant u(T_4,x)$ in $[l_2(T_4),r(T_4)]$ or
$\mathcal{Z}_{J_2(T_4)}[u(T_4,\cdot)-W_\delta(\cdot-r(T_4))]=2$ by the zero number arguments.
In the former case, virtual spreading happens for $u$ by Lemma
\ref{lem:condition for virtual spreading}, contradicts our assumption. In the latter
case, we have
$$\mathcal{Z}_{[l_2(t),r(t)]}[u(t,\cdot)-W_\delta(\cdot-r(t))]=2\mbox{ for }T_4\leqslant t\ll T_4+1.$$
In a similar way as in the proof of the previous lemma we see that the only possibility is
that $r(t)$ catches up $h(t)$ at $t=T_3$, and the other intersection point between
$u(T_3,\cdot)$ and $W_\delta(\cdot-r(T_3))$ stays on the left. Hence we have \eqref{h'(T3)<beta-c0+delta} again
at time $t=T_3$.
Using a similar idea as in step 1 of the current proof, we obtain \eqref{h'(t)<beta-c_0+delta}
for all large time $t$.

\medskip

\emph{Step 4}. Combining Step 1 with Step 3 we have
$$\beta-c_0<h'(t)<\beta-c_0+\delta\mbox{ for large }t.$$
Since $\delta>0$ can be arbitrarily small, we proves \eqref{h'(t)=beta-c0}.
\end{proof}

\begin{lem}\label{lem:max at right beta}
Under the assumption of Lemma \ref{lem:h'(t)=beta-c0},
$u(t,\cdot)$ has exactly one local maximum point  for large $t$.
\end{lem}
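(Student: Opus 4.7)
The plan is to argue by contradiction, supposing that $u(t,\cdot)$ has at least two local maxima for all large $t$. First I would apply the zero-number result Lemma \ref{zero-number} to $u_x$, which satisfies a linear parabolic equation with boundary signs $u_x(t,g(t))>0$ and $u_x(t,h(t))<0$. This yields some $T_0\geqslant 0$ and a positive integer $N$ such that $u_x(t,\cdot)$ has exactly $2N-1$ simple interior zeros for every $t\geqslant T_0$, giving $N$ local maxima $\xi_1(t)<\cdots<\xi_N(t)$ that are smooth in $t$ by the implicit function theorem. The contradiction hypothesis is $N\geqslant 2$.

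Next I would feed $T=T_0$ into Lemma \ref{lem:max at right} to obtain
\[
\xi_1(t)\geqslant \rho(t):=\beta(t-T_0)+C,\quad T_0\leqslant t<T_\infty,
\]
with $T_\infty$ the first time $\rho(t)=h(t)$ (or $+\infty$). By Lemma \ref{lem:h'(t)=beta-c0}, $h'(t)\to \beta-c_0$, so $h(t)=(\beta-c_0)t+o(t)$, whereas $\rho(t)\sim\beta t$; since $\beta>\beta-c_0$ the function $\rho$ must overtake $h$ in finite time, forcing $T_\infty<\infty$ and $\rho(T_\infty)=h(T_\infty)$. Passing to the limit $t\to T_\infty^-$ by continuity of $\xi_1$ yields $\xi_1(T_\infty)\geqslant h(T_\infty)$.

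To close the contradiction I need the strict inequality $\xi_1(T_\infty)<h(T_\infty)$. Here the zero-number argument is used a second, decisive time: the count $2N-1$ of simple interior zeros of $u_x$ is preserved on $[T_0,\infty)$, so no two interior zeros can coalesce (that would create a degenerate interior zero and drop the count) and no interior zero can reach the moving boundary $x=h(t)$ in finite time (the odd-reflection device in the sketch of Lemma \ref{lem:zeros between u and Psi} converts a boundary encounter into a degenerate interior zero of the transformed equation, again dropping the count). Hence each $\xi_i(t)$ stays strictly inside $(g(t),h(t))$ on all of $[T_0,T_\infty]$, giving $\xi_1(T_\infty)<h(T_\infty)$ and the desired contradiction; therefore $N=1$. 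The main obstacle I anticipate is exactly this strict boundary separation, most delicate when $\beta=c_0$, for then $h'(t)\to 0$ so $u_x(t,h(t))\to 0$ and a Hopf-type estimate at a single time cannot prevent zeros of $u_x$ from drifting to the right boundary; the necessary input is the global conservation of the zero count over all $t\geqslant T_0$.
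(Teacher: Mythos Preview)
Your proof is correct and follows the same approach as the paper: zero-number on $u_x$ fixes the number $N$ of local maxima for large $t$, and if $N\geqslant2$ then Lemma \ref{lem:max at right} gives $\xi_1(t)\geqslant\beta(t-T_0)+C$ while $h'(t)\to\beta-c_0<\beta$ by Lemma \ref{lem:h'(t)=beta-c0}, producing the collision contradiction. Your boundary-separation step is slightly over-engineered --- since $u_x(t,g(t))>0$ and $u_x(t,h(t))<0$ by the Hopf lemma for all $t$, Lemma \ref{zero-number} applies directly with nonzero boundary values, so the $2N-1$ simple zeros of $u_x$ persist as smooth interior curves and $\xi_1(T_\infty)<h(T_\infty)$ follows immediately without invoking any odd-reflection device.
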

\begin{proof}
Using zero number argument Lemma \ref{zero-number} to $u_x(t,\cdot)$ we see that $u(t,\cdot)$ has exactly
$N$ local maximum points for large $t$, where $N$ is a positive integer. If $N\geqslant2$, then by
Lemma \ref{lem:max at right} the leftmost maximum point $\xi_1(t)$ moves right at a speed not less
than $\beta$. On the other hand, \eqref{h'(t)=beta-c0} indicates $h(t)$ moves right at a speed $\beta-c_0$.
Therefore, after some time, $\xi_1(t)$ reaches $h(t)$, this is a contradiction.
\end{proof}

\begin{thm}\label{thm:V-convergence}
Assume that vanishing and virtual spreading do not happen for the solution $u$ of \eqref{p}.
\begin{itemize}
  \item[\rm (i)] If $c_0<\beta<\beta^*$, then
  \begin{equation}\label{u converges to V}
    \lim\limits_{t\to\infty}\left\|u(t,\cdot)-V^*(\cdot-h(t))\right\|_{L^\infty(I(t))}=0;
  \end{equation}
  \item[\rm (ii)] If $\beta=c_0$, then
  \begin{equation}\label{u converges to 0}
    \lim\limits_{t\to\infty}\left\|u(t,\cdot)\right\|_{L^\infty(I(t))}=0;
  \end{equation}
\end{itemize}
\end{thm}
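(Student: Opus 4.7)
The plan is to pass to the moving frame at the right free boundary and use parabolic compactness to reduce the theorem to a classification of limit profiles. First, I would introduce
\[
w(t, z) := u(t, z + h(t)), \qquad z \in [g(t) - h(t), 0],
\]
which satisfies
\[
w_t = w_{zz} - (\beta - h'(t)) w_z + f(w), \quad w(t, 0) = 0, \quad -\mu w_z(t, 0) = h'(t).
\]
By Lemma \ref{lem:h'(t)=beta-c0} we have $h'(t) \to \beta - c_0$, so the drift $-(\beta - h'(t))$ converges to $-c_0$, and by Proposition \ref{prop:g_infty>-infty} together with Lemma \ref{lem:h(t)-(beta-c0)t=+infty}, the left endpoint satisfies $g(t) - h(t) \to -\infty$. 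Combining the uniform bound $0 \leqslant w \leqslant A$ with standard interior and boundary Schauder estimates, from any sequence $t_n \to \infty$ one can extract a subsequence along which $w(t_n + s, z)$ converges in $C^{1,2}_{\mathrm{loc}}(\R \times (-\infty, 0])$ to a nonnegative entire solution $W(s, z)$ of $W_t = W_{zz} - c_0 W_z + f(W)$ satisfying $W(s, 0) = 0$ and either $-\mu W_z(s, 0) = \beta - c_0$ (case (i)) or $-\mu W_z(s, 0) = 0$ (case (ii)).

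Case (ii) is then settled directly: the degenerate boundary data $W(s, 0) = W_z(s, 0) = 0$ together with $W \geqslant 0$ force $W \equiv 0$, since otherwise the strong maximum principle would give $W > 0$ in the interior of $\R \times (-\infty, 0]$ and the Hopf lemma at $z = 0$ would yield $W_z(s, 0) < 0$, contradicting $W_z(s, 0) = 0$. This establishes $w(t, \cdot) \to 0$ uniformly on $[-M, 0]$ for every fixed $M > 0$. To upgrade to uniform convergence on all of $I(t)$, I would split $[g(t), h(t)] = [g(t), -h_0] \cup [-h_0, h(t) - M] \cup [h(t) - M, h(t)]$: the leftmost piece is controlled by Corollary \ref{cor:-h0 h(t)}(i) combined with the monotonicity \eqref{rough-symmetry}; the rightmost piece by the moving-frame convergence just obtained; and the middle piece by combining the decay estimate \eqref{left 1} with the locally uniform convergence of Theorem \ref{thm:convergence}, supplemented by a rescaling-plus-Liouville argument ruling out interior plateaus of $u$ (which would otherwise yield virtual spreading and contradict the standing assumption).

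Case (i) is the main obstacle: one must show that every subsequential limit $W$ coincides with $V^*$. Since Lemma \ref{lem:tadpole tw bata<beta*}(ii) identifies $V^*$ as the unique stationary solution of the boundary-value problem satisfied by $W$, it suffices to prove that $W$ is independent of $s$. The plan is to study $\eta(s, z) := W(s, z) - V^*(z)$, which solves a linear uniformly parabolic equation with bounded coefficients on $\R \times (-\infty, 0]$ subject to the degenerate boundary data $\eta(s, 0) = \eta_z(s, 0) = 0$ for every $s$; a boundary unique continuation argument for parabolic equations then forces $\eta \equiv 0$. Alternatively, the zero-number and steepness machinery of Lemma \ref{lem:zeros between u and Psi} and \cite{DGM} may be invoked to exclude any nonstationary entire solution making tangential contact with $V^*$ at $z = 0$. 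Once $W \equiv V^*$, locally uniform moving-frame convergence $w(t, \cdot) \to V^*$ follows, and uniform convergence on $[g(t) - h(t), 0]$, which is exactly \eqref{u converges to V}, is deduced from the decay $V^*(-\infty) = 0$ together with the upper bound \eqref{left bound beta large}, which in the moving frame becomes $Q_A(z + h(t) - (\beta - c_0)t + x_0)$ and is uniformly small for $z$ sufficiently negative.
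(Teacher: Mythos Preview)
Your moving-frame reduction and the identification of the subsequential limit $W$ are essentially the paper's argument: compactness plus the fact that $z=0$ is a permanent degenerate zero of $W-V^*$ (respectively of $W$ itself when $\beta=c_0$) forces $W\equiv V^*$ (respectively $W\equiv 0$) by the zero-number theory of Lemma~\ref{angenent}. So the locally uniform convergence near $h(t)$ is fine.

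The gaps are in the upgrade to uniform convergence on all of $I(t)$.

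\textbf{Case (i).} Your claim that the upper bound \eqref{left bound beta large} becomes ``uniformly small for $z$ sufficiently negative'' in the moving frame is false. In the frame $z=x-h(t)$ the bound reads $w(t,z)\leqslant Q_A\big(z+[h(t)-(\beta-c_0)t]+x_0\big)$, and by Lemma~\ref{lem:h(t)-(beta-c0)t=+infty} the bracket tends to $+\infty$. Since $Q_A$ is increasing with $Q_A(+\infty)=A$, this bound gives no smallness on any fixed left half-line $(-\infty,-M]$; the region where $Q_A$ is small recedes to $z=-\infty$ faster than you can use it. The paper closes this gap differently: Lemma~\ref{lem:max at right beta} shows $u(t,\cdot)$ has a \emph{unique} local maximum for large $t$, and since the local convergence places this maximum inside $[h(t)-M,h(t)]$, the function $w(t,\cdot)$ is \emph{increasing} on $[G(t),-M]$, hence bounded there by $w(t,-M)\approx V^*(-M)<\varepsilon$. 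You need this monotonicity argument (or an equivalent one); the $Q_A$ bound alone does not suffice.

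\textbf{Case (ii).} Your three-piece decomposition leaves the middle interval $[-h_0,h(t)-M]$ uncontrolled. Estimate \eqref{left 1} only reaches up to $x\leqslant Y(t)=O(\ln t)$, while $h(t)$ (though $o(t)$) can be much larger; Theorem~\ref{thm:convergence} is only locally uniform and says nothing on the growing interval $[Y(t),h(t)-M]$. Your ``rescaling-plus-Liouville'' sketch does not obviously handle the possibility that $\max_x u(t,x)$ oscillates around some $d\in(0,1)$: a subsequential limit at the maximum point is then a nontrivial entire solution with maximum $d<1$, and it is not automatic that this forces $u$ to dominate some $W_\delta$ (you would need $D_\delta\leqslant d$, and then a genuine comparison, not just a Liouville classification). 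The paper treats this via a trichotomy on $\limsup/\liminf$ of the maximum and, in the two nontrivial cases, slides a compactly supported traveling wave $W_\delta$ in from the left and uses the intersection-number machinery of Lemma~\ref{lem:zeros between u and Psi} to show it eventually lies below $u$, yielding virtual spreading by Lemma~\ref{lem:condition for virtual spreading} and hence a contradiction. That argument, or something of comparable precision, is needed here.
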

\begin{proof}
1. We first prove the locally uniform convergence near $h(t)$. Set
$w(t,x):=u(t,x+h(t))$ and $G(t):=g(t)-h(t)$ for $t\geqslant 0$. Then
\begin{equation}\label{w-G-H-problem}
\left\{
\begin{array}{ll}
 w_t = w_{xx}-(\beta-h'(t))w_{x} +f(w), &  t>0,\ G(t)< x<0,\\
 w(t,G(t))=0,\ G'(t)=-\mu w_x(t,G(t))+\mu w_x(t,0) , & t>0,\\
 w(t,0)=0,\ h'(t)=-\mu w_x(t,0), & t>0,\\
G(0)=-2h_0,\ \ w(0,x) =u_0 (x+h_0),& -2h_0\leqslant  x \leqslant 0.
\end{array}
\right.
\end{equation}
It is easy to know that $G_{\infty}:=\lim_{t\to\infty}G(t)=-\infty$. Since
$w\in C^{1+\nu/2,2+\nu}([1,\infty)\times[G(t),0])$, $h\in C^{1+\nu/2}([1,\infty))$
for any $\nu\in(0,1)$ and $h'(t)\to \beta -c_0$ by Lemma \ref{lem:h'(t)=beta-c0}, there exists a sequence $\{t_n\}_{n=1}^\infty$ satisfying
$t_n\to\infty$ as $n\to\infty$ such that
$$w(t+t_n,x)\to v(t,x)\mbox{ as }n\to\infty\mbox{ locally uniformly in }(t,x)\in\R\times(-\infty,0],$$
and $v$ is a solution of
\begin{equation*}
\left\{
\begin{array}{ll}
 v_t = v_{xx}- c_0v_{x} +f(v), &  t\in\R,\ x<0,\\
 v(t,0)=0,\ v_x(t,0)=-\frac{\beta-c_0}{\mu}, & t\in\R.
\end{array}
\right.
\end{equation*}

In case $\beta\in(c_0,\beta^*)$, we show that $v(t,x)\equiv V^*(x)$ for all $t\in\R$. If this is
not true, then there exists $(t_0,x_0)\in\R\times(-\infty,0)$ such that $v(t_0,x_0)\neq V^*(x_0)$.
Then for sufficiently small $\epsilon>0$, when $t\in(0,\epsilon)$ we have $v(t_0+t,x_0)\neq V^*(x_0)$.
Using zero number result Lemma \ref{angenent} for $\eta(t,x):=v(t_0+t,x)-V^*(x)$ in
$(t,x)\in[0,\epsilon]\times[x_0,0]$, we see that $\mathcal{Z}_{[x_0,0]}[\eta(t,\cdot)]<\infty$
for $t\in(0,\epsilon)$, and it decreases strictly once it has a degenerate point in $[x_0,0]$.
This contradicts the fact that $x=0$ is a degenerate zero of $\eta(t,\cdot)$ for all $t\in (0,\epsilon)$.
Therefore, $v(t,x)\equiv V^*(x)$, and so $w(t+t_n,x)\to V^*(x)$ as $n\to \infty$ locally uniformly
in $(t,x)\in \R\times (-\infty, 0]$. By the uniqueness of $V^*(x)$ we actually proves
$u(t,\cdot+h(t))=w(t,\cdot) \to V^*(\cdot)$ as $t\to \infty$ uniformly in $[-M, 0]$ for any $M>0$.

In case $\beta =c_0$, a similar discussion as above shows that $v(t,x)\equiv 0$ and so
$u(t,\cdot+h(t)) \to 0$ as $t\to \infty$ uniformly in $[-M, 0]$ for any $M>0$.

\medskip

2. We prove the uniform convergence in $I(t)$ in case $c_0<\beta<\beta^*$. For any small
$\epsilon>0$, there exists a large $M>0$ such that
$$
V^*(x)\leqslant  V^*(-M)\leqslant \frac{\epsilon}{3}\mbox{ for }x\leqslant -M.
$$
Taking $T>0$ sufficiently large, by Step 1 we have
\begin{equation}\label{u(+h(t))-V*<epsilon/3}
  G(t)<-M,\quad \|u(t,\cdot+h(t))-V^*(\cdot)\|_{L^\infty([-M,0])}<\frac{\epsilon}{3}\mbox{ for }t\geqslant  T.
\end{equation}
Hence, the function $u(t,\cdot+h(t))$ has a maximum point in $[-M,0]$. It is the unique maximum
point by Lemma \ref{lem:max at right beta}. Hence $u(t,\cdot+h(t))$ is increasing
in $ [G(t),-M]$, and so
$$0\leqslant  u(t,x+h(t))\leqslant  u(t,h(t)-M)\leqslant  V^*(-M)+\frac{\epsilon}{3}\leqslant \frac{2\epsilon}{3}
\quad \mbox{for } x\in [G(t),-M],\ t\geqslant T.
$$
This implies that
$$\|u(t,\cdot+h(t))-V^*(\cdot)\|_{L^\infty([G(t),-M])}\leqslant \epsilon\mbox{ for }t\geqslant  T.$$
Combining with \eqref{u(+h(t))-V*<epsilon/3} we proves \eqref{u converges to V}.

\medskip


3. We now prove \eqref{u converges to 0} in case $\beta=c_0$.
By Lemma \ref{lem:max at right beta}, $u(t,\cdot)$ has exactly one maximum point $\xi(t)$
when $t$ is large, say, when $t\geqslant T$ for some $T>0$. There are three cases:

{\it Case 1}. $u(t,\xi(t))\to 0$ as $t\to \infty$;

{\it Case 2}.  $u(t,\xi(t))\to 1$ as $t\to \infty$;

{\it Case 3}. There exist $d\in (0,1)$ and a sequence $\{t_n\}_{n=1}^\infty \subset [T, \infty)$
with $t_n \to \infty$ such that $u(t_n,\xi(t_n))=d$ for $n=1,2,\cdots$.

The limit in \eqref{u converges to 0} follows from Case 1 immediately. We now derive contradictions for
Case 2 and Case 3.

{\it Case 2}. By Lemma \ref{lem:compact tw}, there exists $\delta_1  \in (0, c^*(\beta))$ such that
the equation in \eqref{p} has a compactly supported traveling wave $W_{\delta_1}  (x-{\delta_1} t)$ with
\begin{equation}\label{com tw}
W_{\delta_1} (0)= W_{\delta_1} (-L_{\delta_1}) =0,\quad D_{\delta_1} := \max\limits_{-L_{\delta_1} \leqslant z\leqslant 0} W_{\delta_1} (z)
=\frac12 \ \ \mbox{  and  }\ \  {\delta_1} =-\mu W'_{\delta_1} (0).
\end{equation}
By Lemmas \ref{lem:beta>c0 u to 0} and \ref{lem:center}, $u(t,\cdot)\to 0$ as $t\to \infty$ uniformly in $[g(t), 2L_{\delta_1}]$,
by the result in step 1 above, $u(t,\cdot)\to 0$ as $t\to \infty$ uniformly in $[h(t)-2L_{\delta_1}, h(t)]$. Hence
we may assume that, for some $T_1 >T$,
$$
2L_{\delta_1} <\xi(t)<h(t)-2L_{\delta_1} \mbox{ and } u(t,\xi(t))>D_{\delta_1} =\frac12 \mbox{ for all } t\geqslant T_1.
$$
Now we consider the traveling wave $w_1(t,x) :=W_{\delta_1} (x-{\delta_1} t +{\delta_1} T_1 -g_\infty)$. Clearly,
when $t=T_1$ it has no contact point with $u(T_1, x)$. Since it moves rightward with speed ${\delta_1} >0$ and
since $h'(t)\to 0$, the right endpoint $r_1(t):= {\delta_1} t -{\delta_1} T_1 +g_\infty$ of $w_1$ reaches $x=h(t)$
after some time. Before that, $r_1(t)$ first meets $g(t)$ at time $T_2 >T_1$, and then its left endpoint
$l_1(t) := r_1 (t)-L_{\delta_1}$ meets $g(t)$
at time $T_3 >T_2$. By the zero number argument, for $t\in [T_2, T_3)$ we have $\mathcal{Z}_{[g(t),r_1(t)]}
[w_1 (t,\cdot) - u(t,\cdot)] =1$, and for $T_3 <t\ll T_3 +1$, either
\begin{equation}\label{w1 < u}
w_1 (t,x) < u(t,x)\quad \mbox{for } x\in [l_1 (t), r_1(t)],
\end{equation}
or, $\mathcal{Z}_{[l_1(t) , r_1(t)]} [w_1 (t,\cdot) - u(t,\cdot)] =2$. In the latter case, the two contact
points between $w_1$ and $u$ can not remain and move across $x=\xi(t)$ where $u(t,\xi(t))>\frac12 \geqslant w_1(t,\xi(t))$. Therefore,
before $w_1(t,x)$ moves into the interval $[h(t)-L_{\delta_1}, h(t)]$, the two contact points disappear at some time
$T_4 >T_3$, and so \eqref{w1 < u} holds for $t=T_4$. Once \eqref{w1 < u} holds at some time, it holds for all larger time since
$w_1 $ is a lower solution of \eqref{p}.  This leads to virtual
spreading for $u$ by Lemma \ref{lem:condition for virtual spreading}, a contradiction.


{\it Case 3}. As above we select a compactly supported traveling wave $W_{{\delta_2}} (x-{\delta_2} t)$
for some ${\delta_2}\in (0,c^*(\beta))$ such that
\begin{equation}\label{com tw2}
W_{{\delta_2}} (0)= W_{{\delta_2}} (-L_{{\delta_2}}) =0,\quad D_{{\delta_2}} :=
\max\limits_{-L_{{\delta_2}} \leqslant z\leqslant 0} W_{{\delta_2}} (z)= W_{{\delta_2}} ( -\tilde{z} )= d
\ \ \mbox{  and  }\ \  {\delta_2} =-\mu W'_{{\delta_2}} (0),
\end{equation}
where $-\tilde{z}\in (-L_{{\delta_2}}, 0)$ is the maximum point of $W_{{\delta_2}}(z)$.
By the locally uniform convergence in the above step 1 and in Lemma \ref{lem:beta>c0 u to 0}, there exists
$n_0 $ such that
\begin{equation}\label{case 3}
2L_{{\delta_2}} <\xi(t_n)< h(t_n)-2L_{{\delta_2}} \mbox{ for all } n\geqslant n_0.
\end{equation}
Since $\xi(t_n) -{\delta_2} t_n < h(t_n) -{\delta_2}t_n \to -\infty $ as $n\to \infty$, there exists
$n_1 >n_0$ such that
$$
C:= \xi(t_{n_1}) -{\delta_2} t_{n_1} + {\delta_2} t_{n_0} +\tilde{z} \leqslant g_\infty.
$$
Now we consider the traveling wave $w_2(t,x):=W_{{\delta_2}} (x-{\delta_2} t +{\delta_2} t_{n_0} -C)$
for $t\geqslant t_{n_0}$. Since $w_2(t_{n_0},x) =W_{{\delta_2}} (x-C)$,
$w_2 (t_{n_0},\cdot)$ has no contact point with $u(t_{n_0},x)$. Since $w_2$ moves rightward with speed ${\delta_2} >0$
and since $h'(t)\to 0$, the right endpoint $r_2 (t):= {\delta_2} t - {\delta_2} t_{n_0} +C$ of $w_2$
reaches $x=h(t)$ after some time. Before that, $r_2 (t)$ first meets $g(t)$ at some time $T_5 > t_{n_0}$, and
then the left endpoint $l_2(t):= r_2 (t)- L_{{\delta_2}}$ of $w_2$ meets $g(t)$ at some time $T_6 >T_5$.
We remark that $T_6 <t_{n_1}$. In fact, by \eqref{case 3} we have
$$
r_2 (t_{n_1}) ={\delta_2} t_{n_1} -{\delta_2} t_{n_0} +C =\xi(t_{n_1}) +\tilde{z} >2L_{{\delta_2}}
>  g(T_6) +L_{{\delta_2}} = r_2 (T_6).
$$
Now, for $t\in [T_5, T_6)$, using the zero number argument we have
$\mathcal{Z}_{[g(t),r_2 (t)]} [w_2 (t,\cdot) - u(t,\cdot)] =1$. For $T_6 <t\ll T_6 +1$, we have either
\begin{equation}\label{w2 < u}
w_2 (t,x) \leqslant u(t,x)\quad \mbox{for } x\in [l_2 (t), r_2 (t)],
\end{equation}
or, $\mathcal{Z}_{[l_2 (t), r_2 (t)]} [w_2 (t,\cdot) - u(t,\cdot)] =2$.
\eqref{w2 < u} can not be true, since it implies virtual spreading for $u$ by Lemma \ref{lem:condition for virtual spreading}.
In case $w_2(t,\cdot)-u(t,\cdot)$ has two zeros for $T_6 <t\ll T_6 +1$, by the zero number argument,
the two zeros unite to be one degenerate zero $\xi(t_{n_1})$ at time $t_{n_1}$
(note that $\xi (t_{n_1})$ is the maximum point of both $w_2(t_{n_1}, \cdot)$ and $u(t_{n_1}, \cdot)$).
So after $t_{n_1}$, $w_2$ and $u$ have no
contact points. This implies that $w_2 (t,x) <u(t,x)$ ($w_2 > u$ is impossible since the support of $u$ is wider
than that of $w_2$). This again leads to virtual spreading for $u$ by Lemma \ref{lem:condition for virtual spreading}, a contradiction.

This proves Theorem \ref{thm:V-convergence}.
\end{proof}

\begin{remark}\label{rem:h to beta-c0}
\rm
By Lemma \ref{lem:h'(t)=beta-c0} we have $h(t)=(\beta-c_0)t+\varrho(t)$ for some $\varrho(t)=o(t)$.
Hence the uniform convergence in \eqref{u converges to V}  can be rewritten as \eqref{u to V*}.
\end{remark}

\subsubsection{Proofs of Theorems \ref{thm:middle beta} and \ref{thm:beta=c0}}
In the last of this subsection we prove Theorem \ref{thm:middle beta} and Theorem \ref{thm:beta=c0}.
Remember we use $(u(t,x;\sigma\phi), g(t;\sigma\phi), h(t;\sigma\phi))$ to denote the solution of \eqref{p}
with initial data $u_0 =\sigma \phi$ for some given $\phi\in \mathscr{X}(h_0)$.
Define $E_0$ and $\sigma_*$ as in \eqref{def E0}, and when $c_0 \leqslant  \beta <\beta^*$, denote
$$
E_1:=\{\sigma> 0 \mid \mbox{virtual spreading happens for } (u, g, h)\},\quad \sigma^*:=\inf E_1.
$$
By the comparison principle we have $[\sigma,\infty)\subset E_1$ if $\sigma\in E_1$. Thus
$(\sigma^*,\infty)\subset E_1$.

\medskip

\noindent
 {\bf Proof of Theorem \ref{thm:middle beta}:}\ If $\sigma_*=\infty$, then there is
nothing left to prove. We assume $\sigma_*\in(0,\infty)$ in the following.

We first prove $\sigma_*=\sigma^*$. Otherwise, $\sigma_*<\sigma^*$, and so there exist
$\sigma_1,\ \sigma_2\in(\sigma_*,\sigma^*)$ with $\sigma_1<\sigma_2$. By the strong
comparison principle we have
$$g(t;\sigma_1 \phi)>g(t;\sigma_2 \phi),\quad h(t;\sigma_1 \phi)<h(t;\sigma_2 \phi)$$
and
$$
u(t,x;\sigma_1\phi)<u(t,x;\sigma_2\phi) \ \mbox{ for } x\in I^{\sigma_1} (t):=[g(t;\sigma_1\phi),h(t;\sigma_1\phi)],\ t>0.
$$
Since these inequalities are strict at $t=1$, there exists $\epsilon>0$ small such that
$$
u(1,x;\sigma_1\phi)<u(1,x-\epsilon;\sigma_2\phi)\mbox{ for }x\in I^{\sigma_1}(1).
$$
By the comparison principle again we have
$$
u(t,x;\sigma_1\phi)<u(t,x-\epsilon;\sigma_2\phi) \mbox{ for }x\in I^{\sigma_1}(t),\ t\geqslant 1.
$$
And so
\begin{equation}\label{h1 < h2}
u(t,x+h(t;\sigma_1\phi);\sigma_1\phi)<u(t,x+h(t;\sigma_1\phi)-\epsilon;\sigma_2\phi)
\mbox{ for }x\in[g(t;\sigma_1\phi)-h(t;\sigma_1\phi),0],\ t\geqslant 1.
\end{equation}
By Theorem \ref{thm:V-convergence} (i), both $u(t,x+h(t;\sigma_1\phi);\sigma_1\phi)$ and
$u(t,x+h(t;\sigma_2\phi);\sigma_2\phi)$ converge to the tadpole-like function $V^*(x)$
uniformly. Taking limits as $t\to \infty$ in \eqref{h1 < h2} we deduce a contradiction by
$h(t;\sigma_1\phi)-\epsilon-h(t;\sigma_2\phi)\leqslant -\epsilon$. This proves $\sigma_*=\sigma^*$.

It is easily shown as in the proof of Theorem \ref{thm:sigma_*} that $E_0\setminus \{0\}$
is open, and $E_1$ is open by Lemma \ref{lem:condition for virtual spreading},
so neither vanishing nor virtual spreading happens for
$(u(t,x;\sigma\phi)$, $g(t;\sigma\phi)$,$h(t;;\sigma\phi))$ with $\sigma=\sigma^*$. Thus
$u(t,x; \sigma^*\phi)$ is a transition solution and it converges to $V^*$ as in
Theorem \ref{thm:V-convergence} and Remark \ref{rem:h to beta-c0}.

Other conclusions in Theorem \ref{thm:middle beta} follow from the previous lemmas and theorems.
{\hfill $\Box$}

\medskip

\noindent
 {\bf Proof of Theorem \ref{thm:beta=c0}:}\ If $\sigma_*=\infty$, then there is nothing
left to prove. If $\sigma_*\in(0,\infty)$ and $\sigma^*=\infty$, then vanishing happens for
$u(t,x;\sigma\phi)$ with $\sigma<\sigma_*$, and virtual vanishing happens for $u(t,x;\sigma\phi)$ with
$\sigma\geqslant \sigma_*$. Finally we consider the case $0<\sigma_*\leqslant \sigma^*<\infty$. We show
that $E_1$ is an open set. Indeed, if $\sigma_1\in E_1$, then for any $\delta\in(0,c^*(\beta))$ there exists $T_1>0$,
$x_1\in\R$ such that
$$
u(T_1,x;\sigma_1\phi)>W_\delta(x-x_1)\mbox{ for }x\in[x_1-L_\delta,x_1],
$$
since $u(T_1,\cdot;\sigma_1\phi)$ depends on $\sigma_1$ continuously,
there exists $\epsilon>0$ such that
$$
u(T_1,x;\sigma\phi)>W_\delta(x-x_1)\mbox{ for }x\in[x_1-L_\delta,x_1],
$$
for any $\sigma\in[\sigma_1-\epsilon,\sigma_1+\epsilon]$. By
Lemma \ref{lem:condition for virtual spreading}, virtual spreading happens for
$(u(t,x;\sigma\phi)$, $g(t;\sigma\phi)$,$h(t;\sigma\phi))$. Hence $E_1$ is an open set, and
so $E_1=(\sigma^*,\infty)$.

This proves the theorem.
{\hfill $\Box$}

\section{Uniform convergence when (virtual) spreading happens}
In the main results Theorems \ref{thm:small beta}, \ref{thm:middle beta} and \ref{thm:beta=c0},
we observe (virtual) spreading phenomena, which is the case where the solution converges to $1$
locally uniformly in a fixed or moving coordinate frame. In this section we consider the asymptotic
profiles for such solutions in the whole domain.

Throughout this section we assume $0<\beta<\beta^*$.

\subsection{Locally uniform convergence of the front}
We first describe the asymptotic profile near the front $x=h(t)$. In a similar way as
\cite{DMZ,KM,LL2} one can show that
\begin{prop}\label{prop:asymptotic profile near x=h(t)}
Assume $0<\beta<\beta^*$. If (virtual) spreading happens for a solution of \eqref{p}, then
there exists $H_\infty\in\R$ such that
\begin{equation}\label{h(t)-c*t converges to H}
  \lim\limits_{t\to\infty}[h(t)-c^*t]=H_\infty,\
  \lim\limits_{t\to\infty}h'(t)=c^*,
\end{equation}
\begin{equation}\label{u(t,x+h(t)) converges to U*(x)}
 \lim\limits_{t\to\infty}u(t,\cdot+h(t))=U^*(\cdot)\mbox{ locally uniformly in }(-\infty,0].
\end{equation}
\end{prop}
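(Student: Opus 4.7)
First I would pass to the shifted frame by setting $w(t,x) := u(t, x + h(t))$ on $G(t) \le x \le 0$ with $G(t) := g(t) - h(t)$, exactly as in the proof of Theorem~\ref{thm:V-convergence}. In this frame $w$ satisfies a uniformly parabolic equation with the bounded drift $\beta - h'(t)$, vanishes on both boundaries, and the Stefan condition reads $h'(t) = -\mu w_x(t, 0)$. The upper-solution construction of subsection~3.5 (see \eqref{right bound}) already delivers $h(t) \le c^* t + H$ for some $H \in \R$ and the envelope $u(t,x) \le (1 + A e^{-\delta t}) U^*(x - \bar h(t))$ with $\bar h(t) = c^* t - M e^{-\delta t} + H$, which in particular shows $\limsup_{t\to\infty}[h(t) - c^* t] \le H$.

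\textbf{Lower bound via compactly supported traveling waves.} Next I would prove $\liminf_{t\to\infty}[h(t) - c^* t] > -\infty$. When $\beta \in [c_0, \beta^*)$ I use the family $W_\delta$ of Lemma~\ref{lem:compact tw}; when $\beta \in (0, c_0)$ the same phase-plane analysis of subsection~3.2 furnishes an analogous family with speeds filling $(\beta - c_0, c^*)$ (the set $S_1$ on the $c$-$b$ diagram). Since (virtual) spreading guarantees that $u(t, \cdot + ct) \to 1$ on compacts for any $c$ strictly less than $c^*$ (and larger than $\beta - c_0$ when $\beta \ge c_0$), for each small $\eta > 0$ there exist $T$ and $a \in \R$ with $u(T, \cdot) \ge W_\delta(\cdot - a)$ on the support of this shift, where $W_\delta$ has speed $c^* - \eta$. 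Comparison then forces $h(t) \ge (c^* - \eta) t - M_\eta$; letting $\eta \downarrow 0$ and combining with the upper bound yields $h(t) = c^* t + O(1)$.

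\textbf{Locally uniform convergence to $U^*$.} With $h'(t)$ bounded (via \eqref{right bound} and parabolic Schauder estimates up to $x = 0$ after straightening the free boundary), I would invoke a compactness argument. Along any sequence $t_n \to \infty$ a subsequence of $w(t_n + \cdot, \cdot)$ converges locally uniformly (with derivatives) to an entire solution $W$ of $W_t = W_{xx} - (\beta - c_\infty) W_x + f(W)$ on $\{x \le 0\}$, where $c_\infty$ is a limit of $h'(t_n + \cdot)$. The upper envelope gives $W \le U^*(\cdot - x_+)$ for some $x_+ \le 0$, while the compactly supported lower waves of the previous step give $W \ge W_\delta(\cdot - x_-)$ on the pertinent support. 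A zero-number comparison via Lemma~\ref{lem:zeros between u and Psi} applied to $W - U^*$, together with Hopf's lemma at $x = 0$, forces $c_\infty = c^*$ and $W \equiv U^*$. Uniqueness of the limit then yields $w(t, \cdot) \to U^*(\cdot)$ locally uniformly, and passing to the limit in the Stefan condition gives $h'(t) \to -\mu (U^*)'(0) = c^*$.

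\textbf{Convergence of $h(t) - c^* t$.} The main obstacle, and the step I would handle most carefully, is existence of the limit $H_\infty$. The plan is to install a refined two-sided envelope: for small $\delta \in (0, -f'(1))$ and suitable $M$, $H^\pm$, set
\[
u^\pm(t,x) := (1 \pm A e^{-\delta t})\, U^*\bigl(x - c^* t \mp M e^{-\delta t} - H^\pm\bigr).
\]
Direct calculation (using $f'(1) < 0$ to dominate the linearization where $U^* \approx 1$ and the strict sign of $(U^*)'$ elsewhere) shows that these are super- and sub-solutions of \eqref{p}, provided $M$ is large relative to $A/\delta$ so that the moving endpoints $c^* t \pm M e^{-\delta t} + H^\pm$ satisfy the one-sided Stefan inequality. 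The locally uniform convergence already obtained allows one to choose a time $T$ so large that $u(T, \cdot)$ is trapped between $u^-(T, \cdot)$ and $u^+(T, \cdot)$ for appropriate $H^\pm$; comparison then propagates the trap for all $t \ge T$, forcing $|h(t) - c^* t - H^\pm| \le C e^{-\delta t}$. Iterating (or directly optimizing $H^\pm$) shrinks $H^+ - H^-$ to zero, yielding a single $H_\infty$ with exponential rate, and hence \eqref{h(t)-c*t converges to H}. The delicate point is matching the exponential corrections with the Stefan condition; this is precisely where $\delta < -f'(1)$ and the exponential contraction at $x = 0$ (from $(U^*)''(0) + (c^* - \beta)(U^*)'(0) = 0$) must be combined.
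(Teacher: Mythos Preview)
Your overall strategy is the one the paper invokes via \cite{DMZ,KM,LL2}: sandwich $u$ between exponentially corrected shifts of $U^*$ to extract both $H_\infty$ and the profile convergence. Steps~1 and~4 are set up correctly.

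There is a real gap in Step~2, however. From $h(t)\ge(c^*-\eta)t-M_\eta$ for each fixed $\eta>0$ you \emph{cannot} conclude $h(t)-c^*t=O(1)$ by sending $\eta\downarrow 0$: the constant $M_\eta$ typically blows up (indeed $L_\delta\to\infty$ as $\delta\to c^*-\beta+c_0$ by Lemma~\ref{lem:compact tw}), so optimizing over $\eta$ gives at best a sublinear error such as $h(t)-c^*t\ge -C\sqrt{t}$, not a bounded one. This matters because Step~3 tacitly uses the $O(1)$ bound: in the moving frame the upper envelope reads $w(t,x)\le(1+Ae^{-\delta t})U^*(x+h(t)-\bar h(t))$, and if $h(t)-c^*t\to -\infty$ along a subsequence then $h(t)-\bar h(t)\to-\infty$ and the bound collapses to the useless $W\le 1$, so you cannot pin the limit under a shift of $U^*$ as you claim. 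The remedy is to bring the subsolution $u^-$ of Step~4 forward and initialize it \emph{directly} from the (virtual) spreading hypothesis rather than from Step~3: since $u(t,\cdot+ct)\to 1$ on compacts for any $c\in(\beta-c_0,c^*)$, at a large time $T$ one can place the right endpoint of $u^-$ well to the left of $h(T)$ so that $u(T,\cdot)\ge u^-(T,\cdot)$ on the comparison domain (using Lemma~\ref{lem:comp2} to restrict to a right half-interval when $\beta\ge c_0$, since $u$ is small near $g(t)$). Comparison then yields $h(t)\ge c^*t-C$ with a genuine constant, after which your Steps~3--4 go through; this reordering is precisely how \cite{DMZ,KM} avoid the circularity.
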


For small advection: $0<\beta<c_0$, one can give a uniform convergence for the solution
$(u,g,h)$ of \eqref{p} as in \cite{DMZ,KM,LL2}.

\begin{prop}\label{prop:profile when beta is small}
Assume $0<\beta<c_0$. If spreading happens for a solution $(u,g,h)$ of \eqref{p}, then
there exist $G_\infty,\ H_\infty\in\R$ such that \eqref{h(t)-c*t converges to H} holds
and
$$\lim\limits_{t\to\infty}[g(t)-c_l^*t] = G_\infty ,\
\lim\limits_{t\to\infty}g'(t)=c_l^*,$$
$$\lim\limits_{t\to\infty}\|u(t,\cdot)-U^*(\cdot-c^*t-H_\infty)\cdot U_l^*(\cdot-c_l^*t-G_\infty)\|_{L^{\infty}([g(t),h(t)])}=0,
$$
if we extend $U^*$ and $U^*_l$ to be zero outside their supports.
\end{prop}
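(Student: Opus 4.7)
The plan is to combine Proposition \ref{prop:asymptotic profile near x=h(t)} with its mirror statement for the left free boundary, and then patch the two one-sided asymptotic profiles together to obtain a uniform estimate across the whole interval $I(t)=[g(t),h(t)]$.

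First, I would derive the left-hand analogue of Proposition \ref{prop:asymptotic profile near x=h(t)}. Under the hypothesis $0<\beta<c_0$, the leftward semi-wave $(c_l^*,U_l^*)$ solving \eqref{q*l} exists with $c_l^*\in(\beta-c_0,0)$ and plays the symmetric role to $(c^*,U^*)$. Repeating on the left boundary the argument used in the proof of Proposition \ref{prop:asymptotic profile near x=h(t)} (essentially the one in \cite{DMZ,KM,LL2}), one obtains $G_\infty\in\R$ such that
\begin{equation*}
\lim_{t\to\infty}[g(t)-c_l^*t]=G_\infty,\quad \lim_{t\to\infty}g'(t)=c_l^*,\quad u(t,\cdot+g(t))\to U_l^*(\cdot)\text{ locally uniformly in }[0,\infty).
\end{equation*}

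Second, for an arbitrary $\varepsilon>0$ I would choose $K>0$ so large that $|U^*(z)-1|<\varepsilon$ for all $z\leqslant -K$ and $|U_l^*(z)-1|<\varepsilon$ for all $z\geqslant K$, and split $I(t)=I_l(t)\cup I_m(t)\cup I_r(t)$ with $I_l(t):=[g(t),g(t)+K]$, $I_r(t):=[h(t)-K,h(t)]$, and $I_m(t)$ the middle block. On $I_r(t)$, Proposition \ref{prop:asymptotic profile near x=h(t)} gives $\|u(t,\cdot)-U^*(\cdot-c^*t-H_\infty)\|_{L^\infty(I_r(t))}=o(1)$, while the fact that $x-c_l^*t-G_\infty\to+\infty$ uniformly for $x\in I_r(t)$ (because $c^*>0>c_l^*$) yields $U_l^*(x-c_l^*t-G_\infty)=1+O(\varepsilon)$ there. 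A triangle inequality then bounds $|u(t,x)-U^*(x-c^*t-H_\infty)U_l^*(x-c_l^*t-G_\infty)|$ by $C\varepsilon+o(1)$ on $I_r(t)$. The estimate on $I_l(t)$ is completely symmetric using the leftward convergence from Step 1.

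Third, the main obstacle is to upgrade the locally uniform convergence $u(t,\cdot)\to 1$ provided by Theorem \ref{thm:convergence} into a uniform convergence on the moving middle block $I_m(t)$, whose length grows like $(c^*-c_l^*)t$. From Step 1 and Proposition \ref{prop:asymptotic profile near x=h(t)} one has $u(t,g(t)+K)\geqslant 1-\varepsilon$ and $u(t,h(t)-K)\geqslant 1-\varepsilon$ for all large $t$. Combined with the KPP structure $f>0$ on $(0,1)$ and $f'(1)<0$, a standard lower-solution argument (comparing $u$ on $I_m(t)$ against a sub-solution built from the ODE $\dot v=f(v)$ with initial datum $1-\varepsilon$, or against a small perturbation of the constant $1$ on a slowly widening interval with Dirichlet data $1-\varepsilon$) forces $u(t,x)\geqslant 1-C\varepsilon$ uniformly on $I_m(t)$ for $t$ large. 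Since both factors $U^*(\cdot-c^*t-H_\infty)$ and $U_l^*(\cdot-c_l^*t-G_\infty)$ lie within $\varepsilon$ of $1$ on $I_m(t)$ by the choice of $K$, their product is within $O(\varepsilon)$ of $u(t,x)$ there. Letting $\varepsilon\to 0$ concludes the uniform convergence stated in Proposition \ref{prop:profile when beta is small}.
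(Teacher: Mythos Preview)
Your outline is essentially the approach the paper has in mind: the paper does not give an explicit proof of this proposition at all, but simply states that ``one can give a uniform convergence for the solution $(u,g,h)$ of \eqref{p} as in \cite{DMZ,KM,LL2}.'' Those references carry out exactly the program you describe --- prove the analogue of Proposition \ref{prop:asymptotic profile near x=h(t)} at the left boundary using the leftward semi-wave $(c_l^*,U_l^*)$, then glue the two one-sided profiles across a growing middle zone on which $u$ is uniformly close to $1$.

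Two small points are worth tightening. First, you only argue the lower bound $u\geqslant 1-C\varepsilon$ on $I_m(t)$; for the full estimate you also need the upper bound $u(t,x)\leqslant 1+\varepsilon$ for large $t$, which follows immediately by comparison with the spatially homogeneous ODE solution $\eta(t)$ of $\eta'=f(\eta)$, $\eta(0)=1+\|u_0\|_{L^\infty}$ (cf.\ \eqref{u<1+ep} in the paper). Second, your description of the lower solution on the moving middle block is a bit loose; the construction that actually appears in \cite{DMZ,KM} is to build a \emph{global} lower solution of the form $(1-Ae^{-\delta t})U^*(x-c^*t+\cdots)$ on the right (and symmetrically on the left), which directly yields $u(t,x)\geqslant (1-\varepsilon)U^*(x-c^*t-H_\infty)$ for large $t$ and hence the desired lower bound on $I_m(t)$ without any separate argument on a moving Dirichlet problem. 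With these two refinements your proof goes through and matches the literature the paper invokes.
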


\subsection{Locally uniform convergence of the back}
In this subsection we show that, when $c_0 \leqslant \beta<\beta^*$, the back of a virtual spreading
solution $u$ converges to a traveling wave $Q$ locally uniformly. We will use the following definition:

\begin{defn}[\cite{DGM}]\label{def:steeper}
Let $u_1$, $u_2$ be two entire solutions of $u_t=u_{xx}-\beta u_x+f(u)$ satisfying
$u_{1x}(t,x)>0$ and $u_{2x}(t,x)>0$ for all $x\in\R, t\in\R$.
We say that $u_1$ is \textbf{steeper than}
$u_2$ if for any $t_1$, $t_2$ and $x_1$ in $\R$ such that
$u_1(t_1,x_1)=u_2(t_2, x_1)$, we have either
$$
u_1(\cdot+t_1,\cdot)\equiv u_2(\cdot+t_2,\cdot)\mbox{ or }
(u_1)_x(t_1,x_1)>(u_2)_x(t_2,x_1).
$$
\end{defn}
As above, $u_1$ and $u_2$ are called entire solutions since they are defined for all $t\in\R$.
The above property implies that the graph of the solution $u_1$ (at any chosen
time moment $t_1$) and that of the solution $u_2$ (at any chosen time moment $t_2$)
can intersect at most once unless they are identical, and that if they intersect
at a single point, then $u_1 -u_2$ is negative on the left-hand side of the intersection point,
while positive on the right-hand side.

\begin{thm}\label{thm:left limit}
Assume $\beta \in [c_0, \beta^*)$. If virtual spreading happens for a solution $(u,g,h)$ of \eqref{p},
then there exists a continuous function $\theta(t)$ with $\theta(t)=o(t)$ and $\theta(t) \to\infty\ (t\to \infty)$
such that for any $M>0$,
\begin{equation}\label{left limit}
  \lim\limits_{t\to\infty}\|u(t,\cdot)-Q(\cdot-(\beta-c_0)t -\theta(t))\|_{L^\infty([g(t),(\beta-c_0)t +\theta(t)+M])}=0.
\end{equation}
\end{thm}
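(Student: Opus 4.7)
The plan is to define a shift $\theta(t)$ by tracking the $1/2$-level set of $u(t,\cdot)$ in the moving frame of speed $\beta-c_0$, extract a subsequential limit of the shifted solution as an entire solution $W$ of the full PDE, identify $W$ with the critical traveling wave $Q(\cdot-(\beta-c_0)t)$ via the steepness characterization of $Q$ from \cite{DGM}, and finally promote the resulting locally uniform convergence to the required uniform convergence on $[g(t),(\beta-c_0)t+\theta(t)+M]$ via the one-maximum-point structure of $u(t,\cdot)$. Set $\chi(t):=\min\{x\in[g(t),h(t)]:u(t,x)=1/2\}$ and $\theta(t):=\chi(t)-(\beta-c_0)t$. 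The lower bound \eqref{chi(t)>=} gives $\theta(t)\geq \frac{3}{c_0}\ln t-C\to\infty$. For $\theta(t)=o(t)$, invoke the strengthened form of virtual spreading from Theorems \ref{thm:middle beta}(i) and \ref{thm:beta=c0}(i): for every $c\in(\beta-c_0,c^*)$, $u(t,ct)\to 1$, hence $\chi(t)\leq ct$ for $t$ large, whence $\limsup_{t\to\infty}\theta(t)/t\leq c-(\beta-c_0)$; letting $c\searrow\beta-c_0$ yields $\theta(t)/t\to 0$.

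Fix an arbitrary sequence $t_n\to\infty$ and set $w_n(t,x):=u(t+t_n,x+\chi(t_n))$. Each $w_n$ satisfies $(w_n)_t=(w_n)_{xx}-\beta(w_n)_x+f(w_n)$ with $w_n(0,0)=1/2$ and $0\leq w_n\leq A$. Its spatial domain exhausts $\R$ since $g(t+t_n)-\chi(t_n)\to-\infty$ by Proposition \ref{prop:g_infty>-infty} and $h(t+t_n)-\chi(t_n)\to+\infty$ by Proposition \ref{prop:asymptotic profile near x=h(t)} together with $\chi(t_n)=(\beta-c_0)t_n+\theta(t_n)$, $\theta(t_n)=o(t_n)$, and $c^*>\beta-c_0$. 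Parabolic interior estimates then extract a subsequence converging in $C^{1,2}_{\mathrm{loc}}(\R^2)$ to an entire solution $W$ of $W_t=W_{xx}-\beta W_x+f(W)$, with $0\leq W\leq 1$ and $W(0,0)=1/2$. By Lemma \ref{lem:max at right beta} and Proposition \ref{prop:asymptotic profile near x=h(t)}, $u(t_n,\cdot)$ is nondecreasing on $[g(t_n),\xi(t_n)]$ with $\xi(t_n)-\chi(t_n)\to\infty$, so $W_x\geq 0$ on $\R^2$; the ODE $z'=f(z)$ admits only $0$ and $1$ as bounded entire solutions, which together with monotonicity forces $W(t,-\infty)\equiv 0$ and $W(t,+\infty)\equiv 1$, and $W_x>0$ strictly by the strong maximum principle.

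To identify $W$, consider the family $Q_s(t,x):=Q(x-(\beta-c_0)t-s)$, each an entire traveling-wave solution at the critical speed $\beta-c_0$, and define $s^*:=\sup\{s:Q_s\geq W \text{ on }\R^2\}$; the asymptotics at $\pm\infty$ and monotonicity ensure $s^*\in\R$. At $s^*$ one has $Q_{s^*}\geq W$ with a contact, either interior or arising from a limiting translation; the steepness of the critical wave $Q$ recalled in Definition \ref{def:steeper} and proved in \cite{DGM}, combined with the strong maximum principle (after, if needed, a translating compactness argument converting an asymptotic contact into an interior one), forces $W\equiv Q_{s^*}$. The normalizations $W(0,0)=1/2$ and $Q(0)=1/2$ then pin $s^*=0$, so $W(t,x)=Q(x-(\beta-c_0)t)$. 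Since the limit is independent of the subsequence, the full sequence satisfies $u(t,\cdot+\chi(t))\to Q(\cdot)$ locally uniformly in $\R$ as $t\to\infty$.

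For the uniform convergence on $[g(t),\chi(t)+M]$: given $\epsilon>0$, fix $N$ with $Q(-N)<\epsilon/2$. Locally uniform convergence gives $u(t,-N+\chi(t))<\epsilon$ for $t$ large, and Lemma \ref{lem:max at right beta} together with monotonicity of $u(t,\cdot)$ on $[g(t),\xi(t)]\supset[g(t),-N+\chi(t)]$ yields $u(t,y)\leq u(t,-N+\chi(t))<\epsilon$ on $[g(t),-N+\chi(t)]$, while simultaneously $Q(y-\chi(t))\leq Q(-N)<\epsilon/2$ on that same interval; on $[-N+\chi(t),M+\chi(t)]$ the locally uniform convergence applies directly. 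Since $\chi(t)=(\beta-c_0)t+\theta(t)$, this is precisely \eqref{left limit}. The main obstacle is the identification step: the upper bound $u\leq Q_A(\cdot-(\beta-c_0)t+x_0)$ becomes useless in the shifted frame, its decay margin consumed by $\theta(t_n)\to\infty$, so the argument must be qualitative and rests decisively on the steepness characterization of the critical KPP wave from \cite{DGM} and an attendant sliding-tangency argument.
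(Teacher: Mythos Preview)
Your overall strategy---track a level set of $u$, extract an entire limit $W$ by compactness, identify $W$ with $Q$ via the steepness result of \cite{DGM}, and upgrade to uniform convergence via monotonicity---matches the paper's. The gap is in the identification step.

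You define $s^*:=\sup\{s:Q_s\geq W\text{ on }\R^2\}$ and assert that ``the asymptotics at $\pm\infty$ and monotonicity ensure $s^*\in\R$''. This is not justified: the critical wave satisfies $Q(z)\sim C|z|e^{c_0 z/2}$ as $z\to-\infty$, while you have no decay information on $W$ at $-\infty$ beyond $W\to 0$. If $W$ decays more slowly than $Q$ (for instance like $e^{c_0 z/2}$, which is perfectly consistent with $W$ being a monotone entire solution connecting $0$ to $1$), then for every $s$ one has $Q_s(t,x)<W(t,x)$ for $x$ sufficiently negative, and the set $\{s:Q_s\geq W\}$ is empty. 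Your ``translating compactness'' remark does not address this, and as you yourself note, the bound \eqref{left bound beta large} via $Q_A$ loses all its margin after shifting by $\chi(t_n)$.

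The paper avoids the sliding altogether and instead produces a one-sided slope inequality directly. In Steps~3--5 it compares a truncation $\hat u$ of $u$ with truncated tadpole-like waves $\widehat V(\cdot-(\beta-c_0)t-\hat x)$, whose phase-plane trajectory approaches that of $Q$ as the parameter $b\to P(-c_0)$; an intersection argument plus the Hopf lemma yields $\hat u_x(t,\chi(t))>\widehat V_x(y^0)$ at the level-set point. Passing $t\to\infty$ and then $b\to P(-c_0)$ gives $W_x(0,0)\geq Q'(y^*)$ at the common value. Since \cite{DGM} gives the reverse inequality or identity, one concludes $W\equiv Q(\cdot-(\beta-c_0)t+y^*)$. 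Your sliding argument would need precisely this kind of a priori slope bound to get off the ground.

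A minor point: you cite Lemma~\ref{lem:max at right beta} for the monotonicity of $u(t_n,\cdot)$ left of its maximum, but that lemma is stated under the hypothesis that vanishing and virtual spreading do \emph{not} happen. In the virtual spreading regime the paper does not claim a single maximum; it uses Lemma~\ref{lem:max at right} (which, when $N\geq 2$, gives $\xi_1(t)\geq\beta t-C$) together with $\chi(t)\leq(\beta-c_0+\delta_1)t+C$ to obtain $\xi_1(t)-\chi(t)\to\infty$, which is what is actually needed for $W_x\geq 0$.
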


\begin{proof}
The proof is long and is divided into several steps.
We will use $C$ and $T$ to denote positive constants which may be different case by case.

\medskip

\emph{Step 1}. {\it A rough estimate for the speed of the back}. For any $\delta_1 \in
(0,c^*(\beta)-\beta +c_0)$, we consider the compactly supported traveling wave $W_{\delta_1}(x-c_1 t)$ with
$c_1=\beta-c_0+\delta_1 $, where $W_{\delta_1}(z):=W(z;c_1,-c_0+\delta_1 )$ is the solution of \eqref{eq tw}
and \eqref{compact tw} with $c=c_1$, whose support is $[-L_{\delta_1},0]=[-L(c_1,-c_0+\delta_1),0]$.
As in Lemma \ref{lem:compact tw} we denote $D_{\delta_1}:=\max\limits_{-L_{\delta_1}\leqslant z\leqslant0}
W_{\delta_1}(z)$. Write
\begin{equation}\label{def:m}
m:=  \frac12 \inf\{D_{\delta} \mid 0< \delta <c^*(\beta) -\beta +c_0\}.
\end{equation}
Then $m \in (0,1)$ by the phase plane analysis.

By our assumption, virtual spreading happens: $u(t,\cdot+c t)\to1$ locally uniformly in
$\R$ for some $c>0$. Hence for any given $\delta_1 \in (0,c^*-\beta +c_0)$ there exist a large
$T_0$ and $r\in\R$ such that
$$
u(T_0, x)\geqslant W_{\delta_1}(x-r)\quad \mbox{for }x\in[r-L_{\delta_1},r].
$$
By comparison we have
$$
u(t+T_0, x)> W_{\delta_1}(x-c_1t-r)\quad \mbox{for }x\in[r+c_1t-L_{\delta_1},r+c_1t],\ t>0.
$$
Therefore $\chi(t):=\min \{x\in[g(t),h(t)]\big|u(t,x)=m \}$ satisfies
\begin{equation}\label{chi(t)<=c1t+r+chi0}
  \chi(t+T_0)  < c_1 t +r\quad \mbox{for }t>0.
\end{equation}
Combining with \eqref{chi(t)>=} we have
\begin{equation}\label{<=chi(t)<=}
  (\beta-c_0)t+\frac{3}{c_0}\ln t-C\leqslant\chi(t)\leqslant(\beta-c_0+\delta_1)t+C,\quad t\gg1.
\end{equation}

\medskip

\emph{Step 2}. {\it Truncation of the solution}. Instead of $u$ we will consider its truncation
on $[g(t),\xi(t)]$ for some $\xi(t)\in(g(t),h(t))$.

Let $\epsilon \in (0,\frac12 (1-m))$ be any given small constant.
We define $\xi(t)$ as a position near $h(t)$
where $u$ takes value $1-\epsilon$. More precisely, by the
definition of the rightward traveling semi-wave $U^*(x-c^*(\beta)t)$, there exists $M_1 = M_1(\epsilon)>0$
sufficiently large such that $U^*(-M_1)>1-\frac{\epsilon}{2}$.
By \eqref{u(t,x+h(t)) converges to U*(x)}, $u(t,h(t)-M_1)> 1- \epsilon$ for sufficiently large $t$,
and so there exists
$\xi(t)\in[h(t)-M_1, h(t)]$ such that
\begin{equation}\label{u(t,xi(t))=1-epsilon}
  u(t,\xi(t))=1-\epsilon,\quad \mbox{for  large } t.
\end{equation}
By \eqref{<=chi(t)<=} and \eqref{h(t)-c*t converges to H} we have
\begin{equation}\label{xi-chi to infty}
\xi(t)-\chi(t)\geqslant h(t) -M_1 -\chi(t) \geqslant (c^* -\beta +c_0 -\delta_1) t +O(1)\to\infty ,\quad \mbox{as } t\to\infty.
\end{equation}
Since the convergence in \eqref{u(t,x+h(t)) converges to U*(x)} holds in fact
in $C^2_{\rm loc}((-\infty,0])$ topology by parabolic estimate, it follows from
$U^*_x(x)<0$ that
\begin{equation}\label{ux<0}
u_x(t,x)<0 \mbox{ for } x\in [h(t)-2M_1, h(t)] \mbox{ and } t\gg 1.
\end{equation}
So the leftmost local maximum point $\xi_1(t)$ of $u(t,\cdot)$ satisfies
$\xi_1(t)< h(t) -2M_1 < \xi(t)$ for $t\gg 1$.

We now show that
\begin{equation}\label{u>1-epsilon}
u(t,x)\geqslant 1-\epsilon \mbox{ for } x\in [\xi_1(t),\xi(t)],\ t\gg 1.
\end{equation}
In case $u(t,\cdot)$ has exactly one local maximum point $\xi_1(t)$ for large $t$, \eqref{u>1-epsilon}
holds since $u(t,\cdot)$ is decreasing in $[\xi_1(t),\xi(t)]$ and $u(t,\xi(t))=1-\epsilon$.
We now consider the case that $u(t,\cdot)$ has exactly $N\ (\geqslant2)$ local maximum points
$\{\xi_i(t)\}_{i=1}^N$ with $g(t)<\xi_1(t)<\cdots <\xi_N(t)<h(t)$ for large $t$.
We remark that this case is possible only if $\beta<c^*(\beta)$. In fact, when
$\beta\geqslant c^*(\beta)$, by \eqref{xi1(t)>=eta(t)} and \eqref{h(t)-c*t converges to H} we have
$$
0<h(t)-\xi_1(t)<(c^*(\beta)-\beta)t+C\leqslant C,\quad t\gg1.
$$
This contradicts the locally uniform convergence \eqref{u(t,x+h(t)) converges to U*(x)} and the fact
that $U^*$ is a strictly decreasing function. So, in the following, we assume that
\begin{equation}\label{beta<c*}
\beta<c^*(\beta)\quad \mbox{and}\quad \xi_1(t)\geqslant \beta t -C \mbox{ for some } C>0.
\end{equation}

Choose a small $\delta \in (0, c_0)$ and consider the solution $q(z):= W(z;b,-\delta)$ of \eqref{phase sol} with
$\gamma=-\delta$, where $b\in(0,P(-\delta))$. This solution corresponds to a point $G\in S_1$
as in Figure 2 (a), and its trajectory is a curve like
$\Gamma_2$ in Figure 1 (a). When $b\to P(-\delta)$, the trajectory $\Gamma_2\to\Gamma_1$
in Figure 1 (a). As in subsection 3.2, for the above given $\epsilon>0$ and
$\delta\in(0,c_0)$, there exists $b=b(\epsilon,\delta)\in(0,P(-\delta))$ such that
$\mathscr{W}(z):=W(z;b(\epsilon,\delta),-\delta)$ and $\mathcal{L}=L(b(\epsilon,\delta),-\delta)$ satisfy
$$
\mathscr{W}(0)=\mathscr{W}(-\mathcal{L})=0,\quad \mathscr{W}(z)>0\mbox{ for }z\in(-\mathcal{L},0),
$$
$$
\mathscr{W}(\hat{z})=\max\limits_{-\mathcal{L}\leqslant z\leqslant0}
\mathscr{W}(z)= 1- \epsilon \quad\mbox{for some }\hat{z}\in(-\mathcal{L},0),
$$

We prove \eqref{u>1-epsilon} by contradiction. By \eqref{ux<0} we only need to prove \eqref{u>1-epsilon} for
$x\in [\xi_1(t), h(t)-2M_1]$. Assume that there exist a time sequence $\{t_n\}_{n=1}^\infty$
and a sequence $\{y_n\}_{n=1}^\infty$ with $t_n\to \infty$ and $y_n \in [\xi_1(t_n), h(t_n)-2M_1]$ such that
\begin{equation}\label{un<1-epsilon}
u(t_n, y_n) <1-\epsilon \mbox{ for all } n.
\end{equation}
For each $n$, we define a continuous function of $\tau$ by
$$
\psi_n (\tau) := h(\tau) + (\beta -\delta) (t_n -\tau) - y_n -M_1 -\hat{z}.
$$
It is easily seen that
$$
\psi_n (t_n) \geqslant h(t_n) -(h(t_n)-2M_1) -M_1 - \hat{z}>0.
$$
For $\rho \in (0,1)$, by $y_n \geqslant \xi_1(t_n) \geqslant \beta t_n -C$ we have
\begin{eqnarray*}
\psi_n (\rho t_n) & = & h(\rho t_n) + (\beta -\delta)(1-\rho) t_n - y_n +O(1)\\
\ & \leqslant & [c^* \rho - (\beta-\delta) \rho -\delta] t_n +O(1) \to -\infty\mbox{ as } n\to \infty,
\end{eqnarray*}
provided $\rho >0$ is sufficiently small. Hence for such a $\rho $ and for any large $n$,
there exists $\tau_n \in (\rho t_n, t_n)$ such that $\psi_n (\tau_n ) =0$.

For any large $n$, by \eqref{u(t,x+h(t)) converges to U*(x)} we have
$$
u(\tau_n, x) \geqslant U^* (x- h(\tau_n)) -\frac{\epsilon}{2} \geqslant \mathscr{W}(x-h(\tau_n)+M_1),\quad
x\in [h(\tau_n) -M_1 -\mathcal{L}, h(\tau_n)-M_1].
$$
Set $r_n(t):= (\beta-\delta)t + h(\tau_n)-M_1$. Since $\mathscr{W}(x-r_n(t))$ is a compactly
supported traveling wave of \eqref{p}$_1$, and its right endpoint
$$
r_n(t)  = (\beta -\delta)t + c^* \tau_n +H_\infty -M_1 + o(1)
< h(t+\tau_n) =c^* t +c^* \tau_n + H_\infty +o(1)
$$
by \eqref{h(t)-c*t converges to H} and \eqref{beta<c*}, provided $n$ is sufficiently large. Hence
$\mathscr{W}(x- r_n(t)) $ is a lower solution of \eqref{p} and by the comparison principle we have
$$
u(t+\tau_n, x) \geqslant \mathscr{W} (x-r_n(t) ) \ \mbox{ for } x\in [r_n(t)- \mathcal{L} , r_n(t)],\ t>0.
$$
In particular, at $t= t_n -\tau_n >0$ and $x=y_n$, by $\psi_n (\tau_n)=0$ we have
$$
1-\epsilon > u(t_n,y_n) \geqslant \mathscr{W} (-\hat{z}) = 1-\epsilon,
$$
a contradiction. This proves \eqref{u>1-epsilon}.

In what follows, we write $\hat{u}(t,x):=u(t,x)\big|_{x\in[g(t),\xi(t)]}$ as a truncation of $u$.

\medskip

\emph{Step 3}. {\it Truncation of tadpole-like traveling waves}. For any $b\in(0,P(-c_0))$ recall that
$V(z;b,-c_0)$ is a tadpole-like solution of \eqref{phase sol} (cf. point $H$ in Figure 2 (a)).
We choose $b=b(\epsilon)$ near $P(-c_0)$ such that
$$
\max\limits_{z\leqslant 0}V(z;b(\epsilon),-c_0) =V(\bar{z};b(\epsilon),-c_0)=1-2\epsilon,
$$
for some $\bar{z}<0$. In a similar way as above, we write
$$
\widehat{V}(x-(\beta-c_0)t):=V(x-(\beta-c_0)t;b(\epsilon),-c_0)\big|_{x\in(-\infty,\bar{z}+(\beta-c_0)t]}
$$
as a truncation $V$.

\medskip

\emph{Step 4}. {\it Comparison between $\hat{u}$ and $\widehat{V}$}. To study the asymptotic profile
of $\hat{u}$, we compare $\hat{u}$ with a family of the shifts of $\widehat{V}$. Without loss of
generality, we may assume $u(t,x)$ satisfies all the properties in steps 1-3 from time $t=0$. Since
$\hat{u}_x(0,g(0))>0$, one can choose $X>0$ large such that $\hat{u}(0,x)$ and $\widehat{V}(x-\hat{x})$
(for any $\hat{x}\geqslant  X$) intersect at exactly one point $\hat{y}$, and
$$
\hat{u}(0,x)<\widehat{V}(x-\hat{x})\mbox{ for }x\in[g(0),\hat{y}),\
\hat{u}(0,x)>\widehat{V}(x-\hat{x})\mbox{ for }x\in(\hat{y},\min\{\xi(0),\bar{z}+\hat{x}\}).
$$
Since the back of $u(t,\cdot)$ moves rightward faster than $(\beta-c_0)t+\frac{3}{c_0}\ln t-C$
by \eqref{chi(t)>=}, it will exceed $\widehat{V} (x-(\beta-c_0)t-\hat{x})$ at some time
$\hat{T}>0$, that is, their intersection point $(y(t),\hat{u}(t,y(t)))$ starting from
$(\hat{y}, \hat{u}(0,\hat{y}))$ exists only in time interval $[0,\hat{T}]$.

For each $\hat{x}\geqslant  X$, both $\hat{u}(t,x)$ and $\widehat{V}(x-(\beta-c_0)t-\hat{x})$ are solutions
of \eqref{p}$_1$. We now compare them and show that for $t\in [0,\hat{T})$,
\begin{equation}\label{hat{u} < and > widehat{V}}
\left\{
\begin{array}{l}
\mbox{there exists }y(t)\in(g(t),\xi(t))\cap(-\infty,\eta(t)]\mbox{ such that }\\
  \hat{u}(t,x)<\widehat{V}(x-(\beta-c_0)t-\hat{x})\mbox{ for }x\in[g(t),y(t)), \\
  \hat{u}(t,x)>\widehat{V}(x-(\beta-c_0)t-\hat{x})\mbox{ for }x\in(y(t),\min\{\xi(t),\eta(t)\}\big].
\end{array}\right.
\end{equation}
where $\eta(t):= (\beta -c_0)t +\hat{x} +\bar{z}$.
By the comparison principle, this is true provide we exclude the following two
possibilities:
\medskip

(A) the right endpoint $(\xi(t),1-\epsilon)$ of $\hat{u}(t,\cdot)$ touches $\widehat{V}$ at
some time $t\in (0,\hat{T})$;

\medskip

(B) the right endpoint $(\eta(t), 1- 2\epsilon )$ of $\widehat{V} (x-(\beta-c_0)t-\hat{x})$ touches
$\hat{u}$ at some time $t\in (0,\hat{T})$.

\medskip

(A) of course is impossible because $\hat{u}(t,\cdot)$ takes value $1-\epsilon$ at $x=\xi(t)$,
bigger than $\max \widehat{V}$. (B) is impossible when $\eta(t) \in [\xi_1(t), \xi(t)]$ since
in this case $\hat{u}(t,\eta(t))\geqslant 1-\epsilon >\max \widehat{V}$ by \eqref{u>1-epsilon}. When $\eta(t)<\xi_1(t)$,
$\widehat{V}_x(x-(\beta-c_0)t-\hat{x})\big|_{x=\eta(t)}=\widehat{V}_x(\bar{z})=0$ and $\hat{u}_x(t,\eta(t))>0$.
Hence $(\eta(t),1-2\epsilon )$ can not be a new emerging intersection point between $\hat{u}$ and
$\widehat{V}$. This excludes the possibility of (B).

\medskip

\emph{Step 5}. {\it Slope of the back of $u(t,\cdot)$}.
By \eqref{hat{u} < and > widehat{V}} and by the Hopf lemma, at the unique
intersection point $(y(t),\hat{u}(t,y(t)))$ between $\hat{u}$ and $\widehat{V}$ we have
$$
\hat{u}(t,y(t))=\widehat{V} (y(t)-(\beta-c_0)t-\hat{x})\quad\mbox{and}\quad\hat{u}_x(t,y(t))>\widehat{V} _x(y(t)-(\beta-c_0)t-\hat{x}).
$$

Denote $y^0$ the unique root
of $\widehat{V}(z)=m$ in $(-\infty,\bar{z})$. For any given large $t$, we take
$\hat{x}= \chi(t)-(\beta -c_0)t -y^0$, then the function $\hat{u}(t,x)$
and $\widehat{V}(x-(\beta-c_0)t-\hat{x})$ intersect exactly at $x=\chi(t)$:
$$
\hat{u}(t,\chi (t))=m= \widehat{V}(y^0) = \widehat{V}(\chi (t)-(\beta-c_0)t-\hat{x}).
$$
By the Hopf lemma we have
\begin{equation}\label{u steeper than V}
\hat{u}_x(t, \chi (t))>\widehat{V}_x(y^0).
\end{equation}

\medskip

\emph{Step 6}. {\it Convergence of the back of $u$ and the slope of the limit function}.
For any increasing sequence $\{t_n\}_{n=0}^{\infty}$ with $t_n\to\infty \ (n\to \infty)$, we set
$x_n:= \chi (t_n)$ and define
$$
\hat{u}_n(t,x):=\hat{u}(t+t_n,x+x_n)\mbox{ for }g(t+t_n)-x_n\leqslant  x\leqslant \xi(t+t_n)-x_n,\ t>-t_n.
$$
Clearly, $\hat{u}_n(0,0)=\hat{u}(t_n,x_n)=m$ for $n\in\mathbb{N}$. For any given
$t\in\R$, $g(t+t_n)-x_n\to-\infty$ as $n\to\infty$ and by \eqref{h(t)-c*t converges to H} and \eqref{xi-chi to infty} we have
\begin{eqnarray*}
\xi(t+t_n)-x_n & = &\xi(t+t_n)-\xi(t_n)+ [\xi(t_n)-\chi(t_n)]\\
& \geqslant & h(t+t_n) - M_1 -h(t_n) + [\xi(t_n)-\chi(t_n)]\\
& = & c^* t +O(1) + [\xi(t_n)-\chi(t_n)] \to\infty\mbox{ as }n\to\infty.
\end{eqnarray*}

Since $\hat{u}_n(t,x)$ is bounded in $L^\infty$ norm, by parabolic estimate, it is also
bounded in $C^{1+\nu/2,2+\nu}$ $([-M,M]\times[-M,M])$ norm for any $M>0$ and any $\nu\in(0,1)$.
By Cantor's diagonal argument, there exists a subsequence $\{n_j\}$ of $\{n\}$ such that
$$\lim\limits_{j\to\infty}\hat{u}_{n_j}(t,x)=w(t,x)\mbox{ in }C^{1,2}_{\rm loc}(\R^2)\mbox{ topology},$$
where $w\in C^{1,2}(\R^2)$ is an entire solution of \eqref{p}$_1$ with $w(0,0)=m$.
By \eqref{u steeper than V} we have
\begin{equation}\label{w steeper than V}
w_x(0,0)= \lim\limits_{j\to\infty}(\hat{u}_{n_j})_x (0,0) =
\lim\limits_{j\to\infty} \hat{u}_x (t_{n_j}, x_{n_j})
=\lim\limits_{j\to\infty} \hat{u}_x (t_{n_j}, \chi (t_{n_j})) \geqslant \widehat{V}_x(y^0).
\end{equation}

For the solution $Q(z)$ of \eqref{eq tw}-\eqref{tw R} with $c=\beta-c_0$, there exists a unique $y^*\in\R$ such that
$Q(y^*)= m$. By the phase plane analysis (Lemma \ref{lem:tadpole tw bata<beta*} (i)),
$V(\cdot+y^0 ;b(\epsilon),-c_0)= \widehat{V}(\cdot+y^0) \to Q(\cdot+y^*)$ in $C_{\rm loc}^2(\R)$ topology, as
$\epsilon\to0$, or equivalently, as $b(\epsilon)\to P(-c_0)$. Taking limit as $\epsilon\to0$
in \eqref{w steeper than V} we have
\begin{equation}\label{w steeper than Q}
w_x(0,0)\geqslant Q'(y^*).
\end{equation}

On the other hand, both $u_1(t,x):= Q(x-(\beta-c_0)t+y^*)$ and $u_2(t,x):= w(t,x)$ are entire
solutions of \eqref{p}$_1$. By \cite[Lemma 2.8]{DGM}, $u_1$ is steeper than $u_2$
in the sense of Definition \ref{def:steeper}. In particular, taking $t_1 =t_2 =x_1 =0$ in
Definition \ref{def:steeper} we have $u_1(0,0)=Q(y^*)=m=w(0,0)=u_2(0,0)$. Hence
$$
w(t,x)\equiv Q(x-(\beta-c_0)t+y^*)\quad\mbox{for all }t,\ x\in\R
$$
by Definition \ref{def:steeper} and  the inequality \eqref{w steeper than Q}.

Therefore, $\lim\limits_{j\to\infty}\hat{u}_{n_j}(t,x)= Q(x-(\beta-c_0)t+y^*)$
in $C^{1,2}_{\rm loc}(\R^2)$ topology.
By the uniqueness of the limit function $Q$ we have
$$
\lim\limits_{n\to\infty}u_n(t,x)=\lim\limits_{n\to\infty}u(t+t_n,x+\chi(t_n)) =Q(x-(\beta-c_0)t+y^*)\mbox{ in }C^{1,2}_{\rm loc}(\R^2)\mbox{ topology}.
$$
Since $\{t_n\}$ is an arbitrarily chosen sequence we obtain
$$
  \lim\limits_{\tau\to\infty}u(t+\tau,x+\chi(\tau))=Q(x-(\beta-c_0)t +y^*)\mbox{ in }C^{1,2}_{\rm loc}(\R^2)\mbox{ topology}.
$$
Taking $t=0$ we have
\begin{equation}\label{u to Q}
  \lim\limits_{\tau\to\infty}u(\tau,x+\chi(\tau))=Q(x+y^*)\mbox{ in }C^2_{\rm loc}(\R)\mbox{ topology}.
\end{equation}
Define
$$
\theta(\tau):= \chi(\tau) - (\beta-c_0) \tau -y^*,
$$
which is a continuous function of $\tau$.
Then by \eqref{<=chi(t)<=} we have
$$
\frac{3}{c_0}\ln \tau -C \leqslant \theta (\tau) \leqslant \delta_1 \tau +C,\quad \tau \gg 1.
$$
Since this is true for any small $\delta_1 >0$ (see Step 1) we have $\theta(\tau)= o(\tau)\ (\tau\to \infty)$. Thus
by \eqref{u to Q} we have
\begin{equation}\label{u to Q 2}
 \lim\limits_{\tau\to\infty} [u(\tau,x) - Q(x-(\beta-c_0)\tau-\theta(\tau))]=0
\end{equation}
uniformly in $[(\beta-c_0)\tau +\theta(\tau)-M,  (\beta-c_0)\tau + \theta(\tau)+M]$ for any $M>0$.

\medskip
{\it Step 7. Complement of the proof}.  For any $\varepsilon_0>0$, there exists $M>0$ such that
$Q(z)\leqslant Q(-M)\leqslant \varepsilon_0$ for $z<-M$. For this $M$, we choose $T>0$ large such that
when $t>T$ we have
$$
u(t, (\beta-c_0)t +\theta(t)-M )\leqslant 2 Q(-M ) \leqslant 2\varepsilon_0
$$
by \eqref{u to Q 2}. $u(t,\cdot)$  is increasing in $[g(t), (\beta -c_0)t+\theta(t)-M]$
by Lemma \ref{lem:max at right}, hence, when $t>T$ and $x\in [g(t), (\beta -c_0)t+\theta(t)-M]$
we have
$$
|u(t,x)-Q(x-(\beta-c_0)t-\theta(t))| \leqslant u(t,(\beta-c_0)t +\theta(t)-M  )+ Q(-M) \leqslant 3\varepsilon_0.
$$
Combining with \eqref{u to Q 2} we obtain the conclusion \eqref{left limit}.
\end{proof}

\subsection{Uniform convergence}
In this subsection, we complete the proof of Theorem \ref{thm:profile of spreading sol}.

\medskip


\noindent
 {\bf Proof of Theorem \ref{thm:profile of spreading sol}:}\ \eqref{right spreading speed},
 \eqref{left spreading speed} and \eqref{profile convergence 1} are proved in Propositions \ref{prop:asymptotic profile near x=h(t)}
and \ref{prop:profile when beta is small}. We only need to prove \eqref{profile convergence 2} for $c_0\leqslant \beta<\beta^*$.
For any given small $\varepsilon >0$, we will prove
\begin{equation}\label{convergence 2 0}
|u(t,x)-U^*(x-c^* t -H_\infty) \cdot Q(x-(\beta -c_0)t-\theta(t))| < C\varepsilon \quad \mbox{ for } x\in I(t)
\mbox{ and large } t,
\end{equation}
where $C>0$ is a constant independent of $t$ and $x$.

By Lemma \ref{lem:compact tw}, for any $\delta \in (0, c^*(\beta)-\beta +c_0)$ the problem
\eqref{p} has a compactly supported traveling wave $W_\delta (x-(\beta-c_0+\delta)t)$, where
$W_\delta (z)$ (with support $[-L_\delta, 0]$) is the unique solution of
the problem \eqref{eq tw} and \eqref{compact tw}, whose maximum and maximum point
are denoted by $D_\delta$ and $-z_\delta$, respectively.  Moreover, for the above given
$\varepsilon >0$, Lemma \ref{lem:compact tw} also indicates that,
there exists $\delta_\varepsilon \in (0, c^*(\beta)-\beta +c_0)$ such that
$$
D_\delta = W_\delta (-z_\delta) \in (1-\varepsilon, 1) \mbox{ when } \delta\in
(\delta_\varepsilon, c^*(\beta) -\beta +c_0).
$$
We select $\delta_1, \delta_0, \delta_2 \in (\delta_\varepsilon, c^*(\beta)-\beta +c_0 )$ with
$\delta_1 >\delta_0 >\delta_2$ and fix them. For $i=1$ and $2$, denote
$\kappa_i = (1-D_{\delta_i})/(3\varepsilon)$, then $\kappa_i \in (0, \frac13)$.

By the definitions of $U^*(z)$ and $Q(z)$, there exists $M(\varepsilon) >0$ such that when $M>M (\varepsilon)$,
\begin{equation}\label{U* Q =1}
1-\varepsilon \leqslant U^*(z) \leqslant 1\mbox{ for } z\leqslant -M,\qquad
1-\varepsilon \leqslant Q(z) \leqslant 1 \mbox{ for } z\geqslant M
\end{equation}
and there exists $M(\delta_1, \delta_2)>M(\varepsilon)$ such that when $M>M(\delta_1, \delta_2)$,
\begin{equation}\label{UQ > Ddelta}
Q(z) > D_{\delta_1} +\kappa_1 \varepsilon  \mbox{ for } z\in  [M-L_{\delta_1}, M],\qquad
U^*(z) > D_{\delta_2} +\kappa_2 \varepsilon  \mbox{ for } z\in  [-2M, -2M+L_{\delta_2}].
\end{equation}
In what follows we fix an $M>M(\delta_1, \delta_2)>M(\varepsilon)$.

Since the solution $\eta (t)$ of the problem
$$
\eta_t = f(\eta),\quad \eta(0)=1+\|u_0\|_{L^\infty}
$$
is an upper solution of \eqref{p} and since $\eta(t)\to 1$ as $t\to \infty$, there exists
a time $T_1 =T_1 (\varepsilon) >0$ such that
\begin{equation}\label{u<1+ep}
u(t,x) < 1+\varepsilon \quad \mbox{ for } x\in I(t),\ t>T_1.
\end{equation}

By Theorem \ref{thm:left limit}, there exists $T_2 >T_1$ such that when $t>T_2$ we have
\begin{equation}\label{u-Q small}
|u(t,x)-Q(x-(\beta-c_0)t-\theta(t))| < \kappa_1 \varepsilon \mbox{ for } x\in I_l (t) := [g(t),(\beta-c_0)t+\theta(t)+M],
\end{equation}
where $\theta(t)$ is a continuous positive function with $\theta(t)=o(t)$ and $\theta(t)\to \infty\ (t\to \infty)$.
By Proposition \ref{prop:asymptotic profile near x=h(t)}, there exists $T_3 >T_2$ such that
when $t>T_3$ we have
\begin{equation}\label{u-U small}
|u(t,x)-U^*(x-c^*t-H_\infty)|<\kappa_2 \varepsilon \mbox{ for } x\in I_r (t):=[h(t)-2M,h(t)],
\end{equation}
(we extend $U^* (z)$ to be zero for $z>0$ if necessary). We now prove
\begin{equation}\label{u not small middle}
u(t,x) \geqslant 1- \varepsilon\mbox{ for } x\in I_c (t):= [(\beta-c_0)t + \theta(t)+M, h(t)-2M] \mbox{ and large } t.
\end{equation}
Once this is proved, combining it with the above results we obtain \eqref{convergence 2 0} with $C=3$.

In the following we prove \eqref{u not small middle} by contradiction.
Assume that there exist a time sequence $\{t_n\}_{n=1}^\infty$ with
$t_n \to \infty$  and a sequence $\{y_n\}$ with $y_n\in I_c (t_n)$ for each $n$ such that
\begin{equation}\label{u(t,yn)<1-2ep}
  u(t_n,y_n) <  1-\varepsilon\quad\mbox{for all } n.
\end{equation}
We divide the interval $I_c(t)$ into $I^1_c (t)$ and $I^2_c (t)$, where
$$
I^1_c (t):= [(\beta-c_0)t+\theta(t)+M, (\beta-c_0+\delta_0)t],\quad I^2_c (t):= [(\beta-c_0+\delta_0)t, h(t)-2M].
$$
Our idea to derive contradictions is the following. We put a compactly supported traveling wave
$W_{\delta_1}(x-(\beta-c_0+\delta_1)t+C_1)$ (resp. $W_{\delta_2}(x-(\beta-c_0+\delta_2)t+C_2)$)
under $u(t+\tau, x)$ at time $t=0$ in the interval $I_l(\tau)$ (resp. $I_r(\tau)$), and then
as $t$ increases to $t_n -\tau$, its maximum point exactly reaches $y_n\in I^1_c (t_n)$ (resp.
$y_n\in I^2_c (t_n)$), this leads to a contradiction.

First we consider the case that $\{y_n\}$ has a subsequence (denoted it again by $\{y_n\}$)
such that $y_n\in I^1_c (t_n)$ for each $n$. Define a continuous function of $\tau$:
$$
\psi_n^{(1)}(\tau) := \delta_1 \tau -\theta (\tau) + y_n -(\beta-c_0 +\delta_1)t_n - M +z_{\delta_1} \mbox{ for } \tau\leqslant t_n.
$$
Since $y_n \in I^1_c (t_n)$, it is easily seen that
$$
\psi_n^{(1)}(t_n) = y_n -(\beta -c_0)t_n -\theta (t_n) -M +z_{\delta_1} \geqslant z_{\delta_1} >0,
$$
and for $\rho_1 = \frac12 (1-\frac{\delta_0}{\delta_1})\in (0,1)$ we have
$$
\psi_n^{(1)}(\rho_1 t_n) \leqslant ( \rho_1 \delta_1 +\delta_0 -\delta_1 ) t_n + O(1) \to -\infty\mbox{ as } n\to \infty.
$$
Hence, when $n$ is sufficiently large, there exists $\tau_n \in (\rho_1 t_n, t_n)$ such that $\psi_n^{(1)} (\tau_n ) =0$.
For such a large $n$, by \eqref{u-Q small} and \eqref{UQ > Ddelta} we have
$$
u(\tau_n, x) \geqslant Q(x-(\beta-c_0)\tau_n -\theta(\tau_n)) -\kappa_1 \varepsilon \geqslant D_{\delta_1}
\geqslant W_{\delta_1} (x-X) \quad \mbox{ for } x\in [X-L_{\delta_1}, X],
$$
where $X:= (\beta -c_0)\tau_n +\theta(\tau_n) +M$. Using comparison principle we have
$$
u(t+\tau_n, x) \geqslant W_{\delta_1} (x-(\beta -c_0 +\delta_1)t -X) \mbox{ for } x\in J_1(t),\ t>0,
 $$
where $J_1(t) := [(\beta-c_0+\delta_1)t +X-L_{\delta_1}, (\beta-c_0+\delta_1)t +X]$.
By $\psi_n^{(1)}(\tau_n)=0$ we have
$$
y_n =(\beta -c_0 +\delta_1) (t_n -\tau_n) +X -z_{\delta_1} \in J_1 (t_n -\tau_n).
$$
Hence by taking $t=t_n -\tau_n$ and $x=y_n$ we have
$$
1-\varepsilon > u(t_n,y_n) \geqslant W_{\delta_1} (y_n -(\beta-c_0+\delta_1)(t_n -\tau_n) -X) =W_{\delta_1} (-z_{\delta_1})
= D_{\delta_1} > 1-\varepsilon,
$$
a contradiction.

Next we consider the case that $\{y_n\}$ has a subsequence (denoted it again by $\{y_n\}$)
such that $y_n\in I^2_c (t_n)$ for each $n$. The proof is similar as above. Define a continuous function
$$
\psi_n^{(2)}(\tau) := h(\tau) - (\beta -c_0 +\delta_2) \tau - 2M + L_{\delta_2} -y_n +(\beta -c_0+\delta_2) t_n -z_{\delta_2}
 \mbox{ for } \tau\leqslant t_n.
$$
Since $y_n \in I^2_c (t_n)$, it is easily seen that
$$
\psi_n^{(2)}(t_n) = h(t_n)- 2M +L_{\delta_2} - y_n - z_{\delta_2} \geqslant L_{\delta_2} - z_{\delta_2} >0,
$$
and for $\rho_2 \in (0,1)$ we have
\begin{eqnarray*}
\psi_n^{(2)}(\rho_2 t_n) & \leqslant & h(\rho_2 t_n) -(\beta -c_0 +\delta_2) \rho_2 t_n -y_n +(\beta -c_0+\delta_2) t_n +O(1) \\
& = & c^*(\beta) \rho_2 t_n + (\beta -c_0 +\delta_2) (1-\rho_2) t_n -y_n  +O(1) \\
& \leqslant &  [-\rho_2 (\beta -c_0 +\delta_2) +c^*(\beta) \rho_2 - \delta_0 +\delta_2] t_n + O(1) .
\end{eqnarray*}
Since $\delta_0 > \delta_2$, the coefficient of $t_n$ in the last line is negative when $\rho_2 >0$ is sufficiently small. Hence for such a $\rho_2$,
$\psi_n^{(2)}(\rho_2 t_n)\to -\infty$ as $n\to \infty$. Consequently, for any large $n$, there exists $\tau'_n \in (\rho_2 t_n, t_n)$ such that
$\psi_n^{(2)} (\tau'_n) =0$. By \eqref{u-U small} and \eqref{UQ > Ddelta} we have
$$
u(\tau'_n, x) \geqslant U^* (x-c^* \tau'_n  -H_\infty) -\kappa_2  \varepsilon >  D_{\delta_2}
\geqslant W_{\delta_2} (x-X') \quad \mbox{ for } x\in [X'-L_{\delta_2}, X'],
$$
where $X':= h(\tau'_n) -2M +L_{\delta_2}$. By the comparison principle we have
$$
u(t+\tau'_n, x) \geqslant W_{\delta_2} (x-(\beta -c_0 +\delta_2)t -X') \mbox{ for } x\in J_2(t),\ t>0,
 $$
where $J_2(t) := [(\beta-c_0+\delta_2)t +X'-L_{\delta_2}, (\beta-c_0+\delta_2)t +X']$.
By $\psi_n^{(2)}(\tau'_n)=0$ we have
$$
y_n =(\beta -c_0 +\delta_2) (t_n -\tau'_n) +X' -z_{\delta_2} \in J_2 (t_n -\tau'_n).
$$
Hence at $t=t_n -\tau'_n$ and $x=y_n$ we have
$$
1-\varepsilon > u(t_n,y_n) \geqslant W_{\delta_2} (y_n -(\beta-c_0+\delta_2)(t_n -\tau'_n) -X') =W_{\delta_2} (-z_{\delta_2})
= D_{\delta_2} > 1-\varepsilon,
$$
a contradiction.

This completes the proof of Theorem \ref{thm:profile of spreading sol}.
{\hfill $\Box$}


\end{document}